\pgfplotsset{compat=1.17}
\newcommand{\argmin}{\operatorname*{argmin}}
\newcommand{\argmax}{\operatorname*{argmax}}
\newcommand{\LyapOpt}{\mathrm{LyapOpt}}
\newcommand{\MaxWeight}{\mathrm{MaxWeight}}
\newcommand{\indep}{\perp\!\!\!\perp}
\theoremstyle{plain}
\newtheorem{theorem}{Theorem}
\newtheorem{proposition}{Proposition}
\newtheorem{lemma}{Lemma}
\newtheorem{corollary}{Corollary}
\theoremstyle{definition}
\newtheorem{assumption}{Assumption}
\theoremstyle{remark}
\newtheorem{remark}{Remark}
\title{Finite‑Time Minimax Bounds and an Optimal Lyapunov Policy in Queueing Control}
\author{%
  Yujie Liu \hspace{3em} Vincent Y.~F.~Tan \hspace{3em} Yunbei Xu\\
  National University of Singapore\\
  \texttt{\{yj\string-liu, vtan, yunbei\}@nus.edu.sg}
}
\date{}
\begin{document}

\maketitle

\begin{abstract}
 We introduce an original minimax framework for finite-time performance analysis in queueing control and propose a surprisingly simple Lyapunov-based scheduling policy with superior finite-time performance. The framework quantitatively characterizes how the expected total queue length scales with key system parameters, including the capacity of the scheduling set and the variability of arrivals and departures across queues. To our knowledge, this provides the first firm foundation for evaluating and comparing scheduling policies in the finite-time regime, including  nonstationary settings, and shows that the proposed policy can provably and empirically outperform classical MaxWeight in finite time.
Within this framework, we establish three main sets of results. First, we derive minimax lower bounds on the expected total queue length for parallel-queue scheduling via a novel Brownian coupling argument. Second, we propose a new policy, $\LyapOpt$, which minimizes the full quadratic Lyapunov drift—capturing both first- and second-order terms—and achieves optimal finite-time performance in heavy traffic while retaining classical stability guarantees. Third, we identify a key limitation of the classical MaxWeight policy, which optimizes only the first-order drift: its finite-time performance depends suboptimally on system parameters, leading to substantially larger backlogs in explicitly characterized settings. Together, these results delineate the scope and limitations of classical drift-based scheduling and motivate new queueing-control methods with rigorous finite-time guarantees.
\end{abstract}

\section{Introduction}
\label{sec:intro}
In this paper, we study the fundamental scheduling problem in parallel queueing systems. We extend the celebrated “generalized switch” model of \cite{stolyar2004maxweight} to nonstationary settings and characterize its finite-time behavior. The system comprises $n$ job flows, each with its own queue, competing for a limited pool of potentially shared, time-varying many-server resources, modeled by general scheduling sets in $\mathbb{R}_{+}^n$. At each discrete time step, a central controller chooses how to allocate the available service capacity across the $n$ queues. The objective is to optimize system-wide performance measures such as total queue length, delay, and throughput.

Scheduling plays a central role in modern infrastructures. It underpins cloud and communication platforms such as data centers, wireless networks, and service systems, while also serving as a cornerstone of AI infrastructure by enabling throughput-optimized schedulers for GPU clusters that support large-scale model training and inference \citep{ao2025optimizing,li2025throughput}. Whereas classical analyses focus on long-run performance under stationary environment, contemporary platforms routinely encounter bursty, nonstationary workloads and stringent service-level objectives,  as exemplified by production systems including NVIDIA’s Run:AI and Google’s Borg \citep{nvidia2025runai,verma2015large,qiao2024conserve}. In these regimes, system performance is determined by transient congestion and workload variability, highlighting the need for rigorous finite-time guarantees. Developing scheduling policies with provable delay bounds in these nonstationary, time-critical regimes is therefore essential to ensure responsiveness and resource efficiency that next-generation systems require.

The celebrated MaxWeight scheduling policy \citep{tassiulas1992stability,stolyar2004maxweight} has been widely studied in the queueing literature. It is known to be throughput-optimal \citep{tassiulas1992stability} and to achieve optimality in the diffusion-scaled, heavy-traffic regime,  as established through elegant Lyapunov drift analysis and diffusion approximations \citep{stolyar2004maxweight, mandelbaum2004scheduling, dai2005maximum, dai2008asymptotic}. However, in finite-time regime, its performance remains poorly understood, leaving a crucial gap in both theory and practice.

% MaxWeight can incur substantial backlogs, as demonstrated in prior works \citep{shah2006optimal, bramson2021stability} and validated by our experiments. 

Precisely analyzing the finite-time queueing performance of  scheduling policies for individual instances is analytically intractable. To address this challenge, we introduce a minimax framework, akin to the classical minimax-risk formulation in statistical decision theory pioneered by Wald \cite{wald1945statistical} and developed in modern texts \cite{lehmann1998theory, wainwright2019highdimstats, lattimore2020bandit, chen2024assouad}, which evaluates non-asymptotic performance over model classes rather than a single instance, enabling a unified finite-time analysis.
 This perspective is distinct from the instance- or problem-dependent approaches of prior works \citep{tassiulas1992stability,  stolyar2004maxweight}. Our framework provides finite-time upper and lower bounds that hold uniformly and optimally over well-defined classes of instances, up to universal constants. These results offering a principled basis for queueing control with explicit, parameter-dependent optimality guarantees at finite horizons.

In this work, we ask the following central questions: 
\begin{itemize}
    \item How can we formulate a minimax framework for the scheduling problem in queueing systems?  Within this framework, what is the minimum achievable queue length by time $T$?
    \item Can MaxWeight attain this minimum?  If not, what alternative scheduling policies can possibly achieve it, and under what conditions?
\end{itemize}

\subsection{Contributions}
We  provide  rigorous and definitive answers to our central questions: 
\begin{enumerate}[leftmargin=*]
\item \emph{Proposal of a Minimax Framework For Queuing.}  We develop what is, to our knowledge, the first minimax framework for scheduling problems in queueing systems (Section~\ref{sec:minimax_framework}). We focus on finite-time performance metrics such as the expected total queue length and the expected sum of squared queue lengths. The model class comprises capacity-constrained scheduling sets that may vary over time, nonstationary arrival processes subject to mean and variance constraints, and departure processes with bounded variance, thereby capturing a broad range of uncertainties in queueing systems. The minimax objective is to find a scheduling policy that minimizes the worst-case expected total queue length over all instances in this class. This viewpoint sidesteps classical instance-specific assumptions, enables explicit finite-time analysis (including nonstationary settings). It also complements classical asymptotic tools such as fluid and diffusion approximations by explicitly revealing how the queue length scales with the underlying system parameters in the finite-time regime, a level of detail that asymptotic methods inherently obscure.

\item \emph{Derivation of  Finite-Time Lower Bounds.} Within the minimax framework, we derive the first finite-time minimax lower bound for any scheduling policy in parallel queueing systems (Section~\ref{sec:lower}). In the heavy-traffic regime, the bound grows on the order of $\Omega(\sqrt{T})$ over a horizon $T$, while in the interior regime it scales as $\Omega((1-\rho)^{-1})$, where $\rho$ denotes the traffic intensity;
other system parameters characterized in the bounds include the number of queues, the arrival variance, the departure variance, and the capacity parameters of the scheduling sets. This lower bound thus provides a fundamental benchmark for all scheduling policies. Its proof leverages classical minimax lower-bound theory to construct hard yet analytically tractable instances within the model class, and then analyzes the resulting stochastic queueing process through an innovative Brownian-coupling argument from applied probability. This novel, blended proof strategy may have broader applicability for establishing lower bounds in other dynamic-programming–type control problems.

\item \emph{Design and Analysis of an Optimal Lyapunov Policy.}  Building on the finite-time minimax framework, we design and analyze $\LyapOpt$, a novel scheduling policy that minimizes the \emph{full} Lyapunov drift, explicitly incorporating both the first- and second-order terms of the quadratic Lyapunov function (Section~\ref{sec:policy}). 
 We show that $\LyapOpt$ attains the minimax lower bound up to universal constants under certain structured conditions, thereby establishing its minimax optimality in finite time. By including the second-order term,  $\LyapOpt$  \emph{adapts} the scheduling decision to the \emph{geometry} of the queue length vector, rather than defaulting to extreme points as in traditional drift-based methods like MaxWeight.  This enhancement improves the finite-time %\vt{finite-time} 
 performance, particularly in scenarios with asymmetric scheduling sets---for example, when servers have significantly different service rates. Moreover, $\LyapOpt$ retains the classical stability guarantees proven for MaxWeight whenever the traffic intensity is strictly less than 1 (Section~\ref{sec:stability}), since it directly minimizes the one-step Lyapunov drift and the standard stability arguments for MaxWeight carry over verbatim.
 
 % since it directly minimizes the one-step Lyapunov drift. Consequently, the standard stability arguments used for MaxWeight extend naturally to $\LyapOpt$.

\item \emph{Identification of a   Key Limitation of MaxWeight.} Within the finite-time minimax framework, we show that MaxWeight is not minimax optimal   (Section~\ref{sec:mw-gap}).  In particular, its expected total queue length  exceeds the minimax lower bound by a factor that depends explicitly on the capacity of the scheduling set. In highly asymmetric scheduling sets, even under deterministic arrivals and departures, MaxWeight exhibits a $\sqrt{T}$ growth for some range of the finite time $T$, whereas $\LyapOpt$ maintains a constant total queue length independent of $T$ (Section~\ref{sec:mw-gap}). This finding does not contradict MaxWeight's classical optimality in the diffusion-scaled, heavy-traffic regime; rather, it underscores that traditional drift-based methods inevitably overlook finer geometric structures which become especially important when considering the more practically relevant finite-time regime. %\vt{become especially important when considering the more practically relevant finite-time regime}.
\end{enumerate}

\paragraph{Bridging  Queueing Control and Non-Asymptotic Learning.}  Dynamic programming provides a principled framework for optimal control, but exact solutions are often computationally infeasible in high-dimensional systems. As a result, performance is typically studied using asymptotic methods such as fluid and diffusion approximations \citep{chen2001fundamentals} or approximate optimality bounds based on bias and performance-difference analyses \citep{bertsekas2012dynamic}, which offer only limited insight into behavior over finite time horizons. As queueing theory is one of the most widely studied domains in dynamic programming, our work provides a representative minimax framework for finite-time analysis of structured control problems. In doing so, it strengthens the theoretical foundations of both learning theory (with its non-asymptotic, lower-bound viewpoint) and dynamic programming (by bringing queueing-based tools into finite-time analysis), and opens new directions for short-horizon decision making. 

{\it Not just regret:} Unlike conventional analyses of reinforcement learning algorithms (e.g., \citep{cesa2006prediction}), which benchmark the performance of proposed methods against an  {\it oracle}  dynamic programming policy, 
our study examines the intrinsic performance of the policy itself. In this context, our findings complement existing learning frameworks centered on parameter estimation \citep{lehmann1998theory} by emphasizing the fundamental limitations of the control policies that these algorithms seek to approximate.

%Unlike typical analyses of algorithms for reinforcement learning  (e.g., \citep{cesa2006prediction}), which benchmark the performance of proposed learning algorithms against an oracle dynamic programming policy, our study characterizes the inherent performance of the  policy itself. In this sense, our results complement learning frameworks that focus on parameter estimation \citep{lehmann1998theory}, by instead highlighting the fundamental limitations of the control policies that these learning algorithms aim to approximate.

% (see Table~\ref{tab:comparison}).

% \begin{table}[ht]
% \centering
% \caption{Comparison Between Our Setting and Reinforcement Learning}
% \label{tab:comparison}
% \begin{tabular}{lccc}
% \toprule
%  & Knowledge of Parameter & Feedback Type & Performance Metric \\
% \midrule
% Our Setting & Unknown & Full Feedback & Queue Length \\
% Reinforcement Learning & Unknown & Bandit Feedback & Regret \\
% \bottomrule
% \end{tabular}
% \end{table}

\subsection{Related Works}\label{sec:related}

\paragraph{Stability and Diffusion Optimality of MaxWeight.} The MaxWeight policy (also referred to as the backpressure policy in multi-hop queueing networks) was introduced by \citet{tassiulas1992stability} and has since been extensively studied (see, e.g., \cite{andrewsscheduling2004, Tassiulas01012000, stolyar2004maxweight, mandelbaum2004scheduling, dai2005maximum, dai2008asymptotic}). Much of this literature has established the policy's throughput optimality across a wide range of queueing models, demonstrating that MaxWeight stabilizes the queue length process whenever stability is achievable \citep{tassiulas1992stability, Tassiulas01012000, andrewsscheduling2004, dai2005maximum}.
Beyond throughput guarantees, another stream of research has shown that the MaxWeight  policy minimizes the diffusion-scaled workload process among all scheduling policies in the heavy-traffic regime, both in parallel queueing systems \citep{stolyar2004maxweight, mandelbaum2004scheduling} and in fairly general stochastic processing networks \citep{dai2008asymptotic}. These diffusion-scaled analyses are a major step toward understanding the asymptotic efficiency of MaxWeight and its connection to reflected Brownian motion models. However, because they focus on diffusion-scaled dynamics, the resulting characterizations capture only coarse parameter dependence in finite time—typically through first- and second-order moments—while other key quantities (such as the capacity parameters of the scheduling sets) do not appear explicitly. As a result, such asymptotic approximations cannot reveal how performance depends on these system parameters over a fixed, finite time horizon.

\paragraph{Existing Finite-Time Analysis in Queueing.} Understanding queueing behavior in the finite-time regime remains challenging, even for basic models 
such as the $M/M/1$ queue \citep{abate1987transient}. Much of the existing literature focuses on uncontrolled systems, examining the convergence rate to steady state—typically via coupling arguments for the $M/M/1$ queue \citep{robert2013stochastic} or spectral methods for the $GI/M/n$ queue in the heavy traffic regime \citep{gamarnik2013rate,van2011transient}. Finite-time 
analyses for controlled systems is far less developed. There are a  few results  for routing policies such as Join-the-Shortest-Queue (see, e.g., \cite{luczak2006maximum,ma2025convergence}). \citet{luczak2006maximum} use mean-field and coupling techniques to characterize the convergence rate of the maximum queue length to equilibrium in large-scale systems, while \citet{ma2025convergence} establish the convergence rate for the two-server symmetric case based on the original system rather than its diffusion-scaled counterpart. In contrast, our framework directly analyzes finite-time performance in the original system, without relying on diffusion-scaled regimes or making any reference to steady-state behavior. 

\paragraph{Parameter Learning in Queueing.} Some strands of online learning research in queueing systems study scheduling problems in which either  service statistics~\citep{krishnasamy2018learning,krishnasamy2021learning,yang2023learning,stahlbuhk2021learning,freund2023quantifying,liang2018minimizing,dai2024adversarial} or the utility function~\citep{dai2024adversarial,hsu2022integrated} are initially unknown and must be learned online. A significant body of work establishes upper bounds on the time-averaged queue length of learning algorithms within the finite-time regime~\citep{yang2023learning,dai2024adversarial}, with aims such as ensuring system stability or evaluating utility maximization relative to benchmark policies~\citep{dai2024adversarial}. Another common performance metric in the literature is the queue length regret, which quantifies the excess backlog incurred by a learning algorithm relative to an oracle policy such as MaxWeight or $c\mu$ (see, e.g., \cite{krishnasamy2018learning,krishnasamy2021learning,stahlbuhk2021learning,freund2023quantifying,liang2018minimizing}). Despite these advances, little is known about the optimal policies in the finite-time regime when system parameters are already known—beyond the classical result that the $c\mu$ rule is optimal in the single-server case~\citep{hirayama1989further}. To address  this gap in the literature, we go beyond the parameter learning paradigm and propose a minimax formulation with the expected total queue length as the performance metric, thereby elucidating the fundamental finite-time limits of scheduling policies and identifying a minimax-optimal strategy.

\paragraph{Finite-Time Performance of Dynamic Programming Policies.} Queueing control problems can also be modeled within the framework of structured dynamic programming. However, the exponential growth of the state and action spaces makes obtaining exact solutions computationally infeasible—an illustration of the well-known curse of dimensionality \citep{papadimitriou1987complexity,mundhenk2000complexity}. These challenges highlight the crucial need to establish fundamental performance limits for any admissible policy. Despite their importance, few results have addressed this issue; a notable exception is   \citet{gupta2009delay}, who derived lower bounds on delay in multi-hop networks by 
reducing the problem to the delay of a $G/D/1$ queue. Nonetheless, such reductions remain difficult to analyze in the finite-time regime. In contrast, our work provides finite-time lower bounds that explicitly reveal the dependence on underlying system parameters, offering key benchmarks for evaluating scheduling policies in general and challenging settings.

\paragraph{Lyapunov Drift Methods.} Lyapunov drift methods are widely used to establish stability and characterize steady-state behavior in queueing networks \citep{eryilmaz2012asymptotically,maguluri2016heavy}. Many control policies can, in fact, be interpreted as solutions that optimize the one-step Lyapunov drift. The MaxWeight policy directly minimizes the first-order term of the one-step quadratic Lyapunov drift, whereas the drift-plus-penalty policy \citep{neely2010stochastic} minimizes the same term augmented with auxiliary objectives such as delay or energy consumption, thereby ensuring stability while simultaneously optimizing a desired performance criterion. Although such policies guarantee stability, their finite-time performance remains largely unexplored. The proposed $\LyapOpt$ policy minimizes the full Lyapunov drift, incorporating both the first- and second-order terms, and thereby directly targets performance in the finite-time regime. This refinement highlights that optimizing only the first-order term, as done in traditional Lyapunov drift-based methods, is too coarse to achieve finite-time optimality.

The preceding discussion reviewed related works from several perspectives. After introducing our model and the minimax framework in Section~\ref{sec:setup},  we present a comprehensive and detailed comparison of our approach with the most closely related studies  in Table~\ref{tab:comparison} in Section~\ref{sec:minimax_framework}. % summarizes the studies most closely related to our work and provides a comprehensive and detailed comparison.

\subsection{Outline of Paper}
The remainder of the paper is organized as follows. Section~\ref{sec:setup} presents the model setup and establishes the minimax framework. Section~\ref{sec:lower} develops the minimax finite-time lower bound. In Section~\ref{sec:upper}, we propose the $\LyapOpt$ policy, establish its minimax optimality in the finite-time regime, and characterize a limitation of the MaxWeight policy. Section~\ref{sec:stability} further proves the stability of $\LyapOpt$. Section~\ref{sec:exp} presents numerical experiments comparing $\LyapOpt$ with MaxWeight. Section~\ref{sec:extension} extends the results by deriving the minimax finite-time lower bound under specific conditions and considering additional model classes. Finally, Section~\ref{sec:conclusion} concludes the paper and discusses directions for future research.

\section{Problem Setup and Minimax Framework}
\label{sec:setup}

\subsection{Problem Setup of Queueing Control}\label{sec: problem_setup}

\paragraph{Notation.}
Let $\mathbb{N} = \{1, 2, \ldots\}$ be the set of natural numbers, $\mathbb{N}_0 := \mathbb{N} \cup\{0\}$ and $\mathbb{R}^n_+ := \{x \in \mathbb{R}^n : x_i \ge 0,\ 1\leq i \leq n\}$. Let $\mathbb{Z}_+^n := \{x \in \mathbb{Z}^n : x_i \ge 0,\ 1\leq i \leq n\}$ be the set of integer-valued vectors. 
For any set $U \subseteq \mathbb{R}_+^n$, let $\mathrm{conv}(U)$ denote its convex hull, and define $\Pi(U) = \big\{ \gamma \in \mathbb{R}_+^n : \exists\, u \in \text{conv}(U)\ \text{with}\ \gamma \le u \big\}.$ That is, $\Pi(U)$ consists of all nonnegative vectors that are componentwise bounded above by some convex combination of elements of $U$.
We use $\mathbf{0}$ to denote the zero vector, with the dimension understood from context. 
For $x,y \in \mathbb{R}^n$, the componentwise maximum and minimum are defined by $(\max\{x,y\})_i = \max\{x_i,y_i\}$ and $(\min\{x,y\})_i = \min\{x_i,y_i\}$ for $1 \le i \le n$. $\langle x, y \rangle$ denotes the inner product of $x$ and $y$. Independence of random elements $X$ and $Y$ is written  as $X \indep Y$, while conditional independence of $X$ and $Y$ given a $\sigma$-algebra $\mathcal{G}$ is written $X \indep Y \mid \mathcal{G}$. We adopt the convention $\frac{c}{0}=+\infty$ for any constant $c>0$.

% A (discrete-time) \emph{filtration} $\{\mathcal{F}_t\}_{t\ge 0}$ is a nondecreasing family of sub-$\sigma$-algebras of $\mathcal{F}$ (satisfying the usual conditions), and $\mathcal{F}_\infty:=\sigma\!\left(\bigcup_{t\ge 0}\mathcal{F}_t\right)$. 
% A \emph{stochastic kernel} from $(X,\mathcal{X})$ to $(Y,\mathcal{Y})$ is a map $\kappa:X\times\mathcal{Y}\to[0,1]$ such that $\Gamma\mapsto \kappa(x,\Gamma)$ is a probability measure on $(Y,\mathcal{Y})$ for each $x$, and $x\mapsto \kappa(x,\Gamma)$ is $\mathcal{X}$-measurable for each $\Gamma\in\mathcal{Y}$. 

\paragraph{Probability space.}
We work in a complete probability space $(\Omega,\mathcal{F},\mathbb{P})$. 
% equipped with a discrete-time filtration $\{\mathcal{F}_t\}_{t\in \mathbb{N}_0}$. 
% Unless stated otherwise, all random elements are defined on $(\Omega,\mathcal{F},\mathbb{P})$ and adapted to $\{\mathcal{F}_t\}$. 
Expectation and variance under $\mathbb{P}$ are denoted as $\mathbb{E}[\cdot]$ and $\text{Var}(\cdot)$, respectively, and ``a.s.'' is a shorthand for almost surely. 
A stochastic process $X:\mathbb{N}_0\times\Omega\to E$ is denoted as $\{X(t)\}_{t\in \mathbb{N}_0}$, where $X(t)=X(t,\omega)$ refers to the random variable at time $t\in \mathbb{N}_0$. 

\paragraph{Queueing Model and Dynamics.}
We consider a discrete-time queueing system consisting of $n$ parallel, infinite-capacity queues, where jobs arrive exogenously and depart after a single service. 
The queue length process is $Q:\mathbb{N}_0\times\Omega\to\mathbb{R}^n_+$ with $Q(0)=\mathbf{0}$ almost surely. The arrival process is $A:\mathbb{N}\times\Omega\to\mathbb{R}^n_+$, where $A_i(t)$ is the number of jobs arriving to queue $i$ for $1\leq i\leq n$. We consider the scheduling sequence (also referred to as a {\em switch}) $\mathcal{D}:=\{\mathcal{D}_t\}_{t\in\mathbb{N}_0}$, where each scheduling set $\mathcal{D}_t\subset\mathbb{R}^n_+$ is compact. 
An element of $\mathcal{D}_t$, termed a \emph{schedule}, specifies the expected number of jobs that can depart from each queue at time $t$. At each time $t$, the controller chooses $d(t)\in\mathcal{D}_t$ according to a (possibly randomized) policy, and the realized departure vector $D(t)$ is a random realization consistent with $d(t)$, as formalized in~\eqref{definition_d} in the following.
The queue lengths then evolve almost surely as
\begin{align}
  Q(0) &=\textbf{0};\nonumber\\
  Q(t+1) &= \max\{Q(t) - D(t), \mathbf{0}\} + A(t+1), \quad \text{for } t \in \mathbb{N}. \label{queue_recursion}
\end{align}

Note that instead of specifying servers explicitly as in~\citep{tassiulas1992stability}, we represent service through time-indexed scheduling sets
$\{\mathcal{D}_t\}_{t\in\mathbb{N}_0}$, where each $\mathcal{D}_t\subset\mathbb{R}_+^n$ denotes the
set of all feasible instantaneous expected departure vectors at time $t$. This abstraction generalizes
classical multi-server models. Two illustrative examples are shown in
Table~\ref{tab:service-matrix}. The number in the row for queue $i$ and the column for server $j$ specifies the service rate of server $j$ for queue $i$ (a zero indicates that server $j$ cannot serve queue $i$). Each server may serve at most one
queue per slot. For the single-server case (Table~\ref{tab:single}), the instantaneous scheduling
set is $\mathcal{D}_1=\{(1,0),\,(0,2)\}$. For the two-server case (Table~\ref{tab:multi}), the
instantaneous scheduling set is $\mathcal{D}_2=\{(1,0),\,(0,1.5),\,(2,0),\,(1,1.5)\}$.  Figure~\ref{fig:scheduling-sets}
depicts the sets $\mathcal{D}_1$ and $\mathcal{D}_2$ together with their associated regions
$\Pi(\mathcal{D}_1)$ and $\Pi(\mathcal{D}_2)$.

\begin{table}[htbp]
  \centering
  \begin{subtable}{0.48\linewidth}
    \centering
    \caption{Single server}
    \label{tab:single}
    \begin{tabular}{@{}lc@{}}
      \toprule
      & Server 1 \\ \midrule
      Queue 1 & 1 \\
      Queue 2 & 2 \\ \bottomrule
    \end{tabular}
  \end{subtable}\hfill
  \begin{subtable}{0.48\linewidth}
    \centering
    \caption{Two servers}
    \label{tab:multi}
    \begin{tabular}{@{}lcc@{}}
      \toprule
      & Server 1 & Server 2 \\ \midrule
      Queue 1 & 1 & 1 \\
      Queue 2 & 0 & 1.5 \\ \bottomrule
    \end{tabular}
  \end{subtable}
  \caption{Service tables}
  \label{tab:service-matrix}
\end{table}

\begin{figure}[H]
  \centering
  \begin{subfigure}[t]{0.25\textwidth}
    \includegraphics[width=\linewidth]{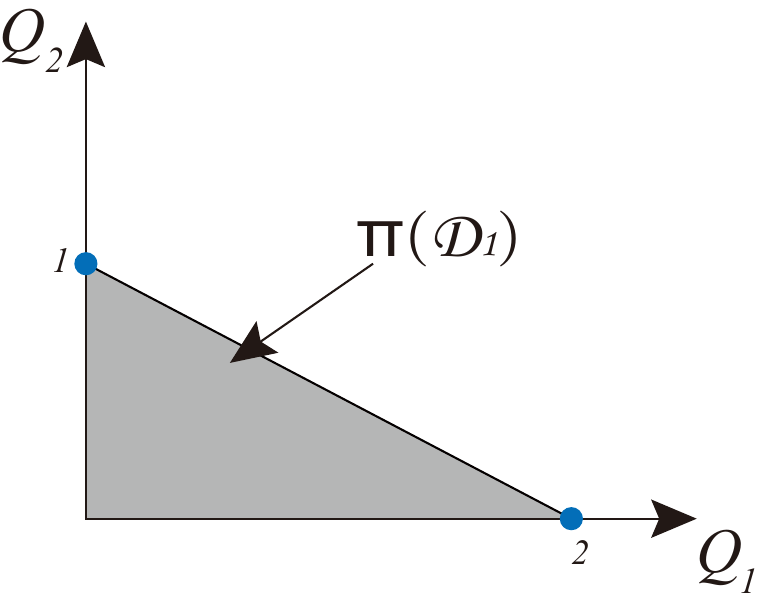}
    \caption{Scheduling set $\mathcal{D}_1$ and $\Pi(\mathcal{D}_1)$ induced by Table~\ref{tab:single}}
    \label{fig:single-sched}
  \end{subfigure}\hspace{3cm}
  \begin{subfigure}[t]{0.25\textwidth}
    \includegraphics[width=\linewidth]{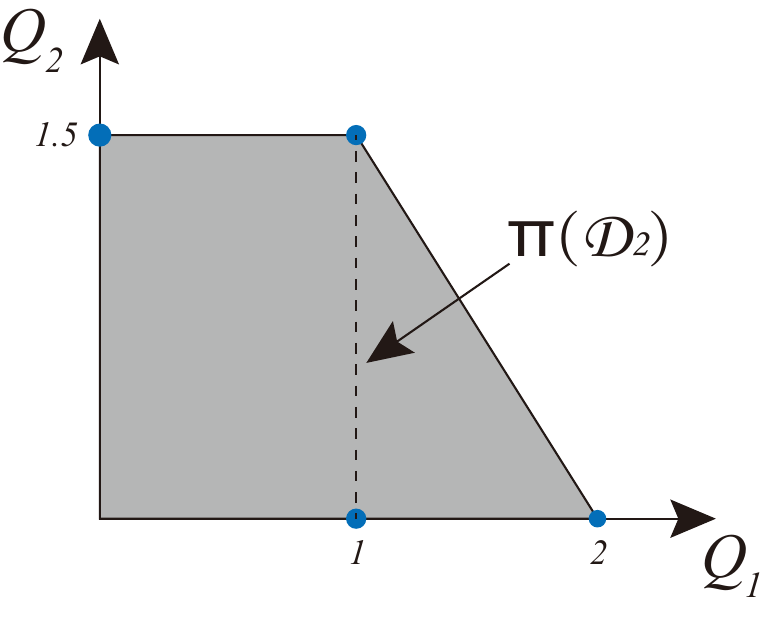}
    \caption{Scheduling set $\mathcal{D}_2$ and $\Pi(\mathcal{D}_2)$ induced by Table~\ref{tab:multi}}
    \label{fig:multi-sched}
  \end{subfigure}
  \caption{Scheduling sets induced by the service matrices in Table~\ref{tab:service-matrix}.}
  \label{fig:scheduling-sets}
\end{figure}

\paragraph{Arrival Process.} In our model, we allow the arrivals to be nonstationary. That is, the distribution of $A(t)$ may vary with time $t$, while the sequence $\{A(t)\}_{t\in \mathbb{N}}$ remains independent across $t$. Moreover, for each fixed $t$, the components $\{A_i(t)\}_{i=1}^n$ may exhibit arbitrary dependence across queues indexed by $1\le i\le n$.
Let $\lambda(t):=\mathbb{E}[A(t)]\in\mathbb{R}^n_+$ be the arrival rate vector at time $t$.

\paragraph{Policy.}
  Let the history (i.e., the information available) prior to choosing $d(t)$  at time $t $ be
\begin{align*}
  \mathcal{F}_{t}: =\sigma\{\mathcal{D}_0, D(0), A(1),  \cdots,\mathcal{D}_{t-1}, D(t-1), A(t), \mathcal{D}_{t}\}.  
\end{align*}
A \emph{policy} $\Phi$ is defined as a sequence of scheduling rules, $\Phi=\{\phi_t\}_{t \in \mathbb{N}_0}$.
Each $\phi_t$ specifies, based on the history $\mathcal{F}_{t}$, a probability distribution over the scheduling set $\mathcal{D}_t$.
Formally, given $\mathcal{F}_{t}$, the schedule $d(t)\in\mathcal{D}_t$ is sampled according to
\[
\mathbb{P}\big(d(t)\in \cdot \,\big|\, \mathcal{F}_{t}\big)=\phi_t(\cdot).
\]

\paragraph{Departure Mechanism and Capacity Region.}
The departure variability is modeled by a \emph{random departure field} $S= \{S_t\}_{t\in \mathbb{N}_0}$, where
\[
S_t: \mathcal{D}_t\times\Omega \to \mathbb{R}^n_+,
\qquad (d,\omega)\mapsto S(d,\omega),
\]
 is jointly measurable with the natural product $\sigma$-algebra. Write $S_d(t): =S_t(d,\omega)$. For each $t \in \mathbb{N}_0$, the $\sigma$-algebra generated by $\{S_d(t):d\in\mathcal{D}_t\}$ is independent of $\mathcal{F}_{t}$ conditioned on $\mathcal{D}_t$. Moreover, the random departure field $S$ is assumed to be independent of the arrival process $A$.
The field satisfies \emph{mean consistency}:
\[
\mathbb{E}[S_d(t)]=d \quad \text{a.s.} \quad \text{for all } t\in\mathbb{N}_0,\ d\in\mathcal{D}_t.
\]
At time $t$, once the policy selects $d(t)\in\mathcal{D}_t$, the departure vector is realized by
\begin{align}\label{definition_d}
    D(t):=S_{d(t)}(t).
\end{align}
Consequently, 
\[
D(t)\ \perp\!\!\!\perp\ \mathcal{F}_{t} \mid d(t),\qquad \mathbb{E}\!\left[D(t)\mid \mathcal{F}_{t},\,d(t)\right]=d(t) \quad \text{a.s.}
\]

% \paragraph{Queueing Dynamics and Departure Mechanism.}
% Departure randomness is modeled as follows. 
% For each schedule $d\in\mathcal{D}$, the system generates a random departure vector $D\in\mathbb{R}^n_+$ whose distribution depends only on $d$ and whose mean equals $d$. 
% Formally, at each time $t$, once $d(t)$ is chosen, the nominal departure $D(t)$ satisfies
% \[
% \mathbb{P}\big(D(t)\in \cdot \,\big|\, \mathcal{F}_t,d(t)\big)\quad\text{is a probability measure on }\mathbb{R}^n_+
% \]
%  with $\mathbb{E}[D(t)\mid\mathcal{F}_t,d(t)]=d(t)$, and $D(t)$ is conditionally independent of $\mathcal{F}_t$ given $d(t)$. And the queue length process evolves almost surely according to
%  \[
%  Q(t+1)=\max\{Q(t)-D(t),\mathbf{0}\}+A(t),\qquad t\in \mathbb{N}_0.
%  \]

Our goal is to minimize the queue length in the finite-time regime. To achieve this, we develop a minimax framework that captures the fundamental finite-time performance limits of the scheduling problem and provides a benchmark against which all scheduling policies can be evaluated.

% Our goal is to minimize the queue length. To achieve this, we analyze the fundamental limits of existing scheduling policies (notably the MaxWeight policy), and develop a novel Lyapunov-based policy that explicitly accounts for second-order term of the Lyapunov drift, demonstrating optimality against the minimax lower bound in the finite-time regime.

\subsection{Minimax Criteria}\label{sec:minimax_framework}
The minimax criterion is a standard approach to studying the intrinsic difficulty of problems in statistics and machine learning \citep{wald1945statistical}. In this work, we extend this criterion to the domain of queueing control, aiming to bridge queueing theory with learning-theoretic methodologies and to motivate the use of statistical decision-theoretic tools in a broader class of dynamic programming problems.
\begin{comment}
Originating in the work of Wald \citep{wald1945statistical}, the minimax approach formalized the connection to von Neumann’s game-theoretic framework \citep{copeland1945john} and provided a unified treatment of statistical estimation and hypothesis testing within the paradigm of statistical decision theory. In this work, we extend this perspective to the domain of queueing control, introducing—for the first time to our knowledge—a minimax formulation for analyzing the finite-time fundamental limits of scheduling policies. This aims to bridge queueing theory with learning-theoretic methodologies and to motivate the application of statistical decision-theoretic tools to a broader class of dynamic programming problems.
\end{comment}

\paragraph{Performance Metrics.} 
We consider two standard performance metrics: the total queue length (see~\citet{neely2010stochastic}), which captures the overall system backlog, and the sum of squares of queue lengths (see~\citet{stolyar2004maxweight}), which reflects the degree of imbalance across queues. These are formally defined as
\begin{align*}
 \mathbb{E}\left[\sum_{i=1}^{n} Q_i(T)\right] \quad \text{and} \quad
\mathbb{E}\left[\sum_{i=1}^{n} Q_i(T)^2\right] ,
\end{align*}
respectively. In this work, we primarily present results for the total queue length, while the corresponding matching bounds for the sum of squares objective are provided in Appendix~\ref{app_sec:square_queue_length}.
 Note that this is a dynamic programming problem, where the objective at time $t$ is a function of all past decisions $d(0), d(1), \ldots,d(t-1)$. A central challenge in dynamic programming and control is to develop solutions that are both tractable and practically effective.

\paragraph{Model Class.}
The queueing system under consideration is specified by a scheduling sequence $\mathcal{D}$, an arrival process $A$, and a random departure field
$S$.
Adopting a minimax perspective,  we define the following model class that includes capacity-constrained scheduling sequence, arrival processes subject to mean and variance constraints, and departure processes with bounded variance.
% \begin{align*}
% \mathcal{M}^ {\rho}(B,C_{\mathrm{a}},C_{\mathrm{d}})
%    = \bigg\{ (\mathcal{D},A,S):\ 
%    &\frac{1}{n}\sum_{i=1}^{n} d_i^2 \leq B^2, \ \forall d\in\mathcal{D}_t,  t\in \mathbb{N}; \\
%    & \lambda(t)\in \rho \Pi(\mathcal{D}_t),
%     \frac{1}{n}\sum_{i=1}^{n} \mathrm{Var}(A_i(t)) \leq C_{\mathrm{a}}^2,\forall t\in \mathbb{N}; 
%     \\
%    & \frac{1}{n}\sum_{i=1}^{n} \mathrm{Var}\!\left((S_d(t))_i\right) \leq C_{\mathrm{d}}^2, 
%    \ \forall d\in\mathcal{D}_t,  t\in \mathbb{N}
%    \bigg\},
% \end{align*}
\begin{align*}
     \mathcal{M}^ {\rho}(B,C_{\mathrm{a}},C_{\mathrm{d}}) = \left\{ (\mathcal{D},A,S)\,: \, \parbox[c]{3.3in}{$\frac{1}{n}\sum_{i=1}^{n} d_i^2 \leq B^2, \,\,\, \forall d\in\mathcal{D}_t,    t\in \mathbb{N}_0;$ \vspace{0.04 in} \\ $\lambda(t)\in \rho\Pi(\mathcal{D}_t)$, $\frac{1}{n}\sum_{i=1}^{n} \mathrm{Var}(A_i(t)) \leq C_{\mathrm{a}}^2,\,\,\, \forall\, t\in \mathbb{N};$ \vspace{0.04 in} \\  $\frac{1}{n}\sum_{i=1}^{n} \mathrm{Var}\!\left((S_d(t))_i\right) \leq C_{\mathrm{d}}^2, 
    \,\,\, \forall\, d\in\mathcal{D}_t,  t\in \mathbb{N}_0$ }
      \right\}
\end{align*}
where the scheduling sequence $\mathcal{D}$, the arrival process $A$, and the random departure field
$S$ are those defined in Section~\ref{sec: problem_setup}. The parameter $\rho \in (0,1]$ denotes the {\em traffic intensity}; $B>0$ constrains the capacity of the scheduling set; and $C_{\mathrm{a}}\geq 0$ and $C_{\mathrm{d}}\geq 0$ respectively control the variability of the arrivals and departures across queues. We refer to $\rho \uparrow 1$  as the {\em  heavy-traffic regime} and $\rho \in (0,1)$ as the {\em interior} (of the capacity region) regime. %\vt{We refer to $\rho \uparrow 1$  as the {\em  heavy-traffic regime} and $\rho \in (0,1)$ as the {\em interior} (of the capacity region) regime.}

Similar constraints of the form $\lambda(t)\in \rho \Pi(\mathcal{D}_t)$ have been studied in prior works (e.g., \citep{tassiulas1992stability,stolyar2004maxweight,neely2010stochastic}). \citet{tassiulas1992stability} considered time-invariant scheduling sets and showed that stability requires arrival rates to be dominated by the convex hull of feasible schedules. \citet{stolyar2004maxweight} and~\citet{neely2010stochastic} modeled time-varying scheduling sets as finite-state Markov chains and required arrival rates to be dominated by schedules averaged under the stationary distribution. In contrast, we allow the scheduling sequence to be time-varying and impose this domination constraint at each time step.

We aim to characterize the fundamental minimax expected total queue length at time $T$:  
\begin{align}\label{minmax_frame}
  \Gamma^ {\rho}(B,C_{\mathrm{a}}, C_{\mathrm{d}},T):=  \inf_{\Phi} \sup_{(\mathcal{D},A,S) \in \mathcal{M}^ {\rho}(B,C_{\mathrm{a}}, C_{\mathrm{d}})} \mathbb{E}^\Phi_{ (A, S)}\left[\sum_{i=1}^{n} Q_i(T)\right]. 
\end{align}
This work introduces, for the first time, a minimax formulation for characterizing the finite-time fundamental limits of queueing control, offering a principled approach to quantifying the hardness of structured dynamic scheduling problems.  

Table~\ref{tab:comparison} provides a structured comparison of most related works on scheduling in queueing systems. ``Resource Type'' specifies the fundamental scheduling unit, such as individual servers or the more abstract scheduling set $\mathcal{D}_t$.  ``Arrival Pattern'' characterizes the arrival process, represented by $A=\{A(t)\}_{t\in\mathbb{N}_0}$, distinguishing between stationary arrivals, which are i.i.d.\ over time, and nonstationary arrivals, which may vary arbitrarily over time. ``Unknown Parameters'' lists the system characteristics that are not assumed to be known a priori and must be inferred. Finally, ``Performance Metric'' specifies the criterion used to evaluate policies; for our setting, this is defined precisely in~\eqref{minmax_frame}.

\begin{table}[htbp]
\centering
\setlength{\tabcolsep}{4pt}
%\begin{small}
\begin{tabular}{|c|c|c|c|c|c|}
\hline
& \makecell{Resource \\ Type}
& \makecell{Arrival \\ Pattern} 
& \makecell{Unknown \\ Parameters} 
& \makecell{Performance \\ Metric} 
\\ \hline

Krishnasamy et al.~\cite{krishnasamy2021learning} 
& Server
& Stationary 
& \makecell{Arrival rate \\ Service rate}
& \makecell{Queue length \\ regret}  
 \\ \hline

Yang et al.~\cite{yang2023learning} 
& Server 
& Nonstationary
& \makecell{Arrival rate \\ Service rate}
& \makecell{Time-averaged \\ queue length}  
\\ \hline

Stolyar~\cite{stolyar2004maxweight} 
& \makecell{Scheduling \\ set}
& Stationary 
& Arrival rate
& \makecell{Workload in \\diffusion-scaled,\\ heavy-traffic regime}  
\\ \hline

Ours 
& \makecell{Scheduling \\ set}
& Nonstationary 
& Arrival rate
& \makecell{Finite-time \\ queue length} 
 \\ \hline
\end{tabular}
%\end{small}
\caption{Comparison of most related works on scheduling in queueing systems.}
\label{tab:comparison}
\end{table}
As shown in Table~\ref{tab:comparison}, our framework represents a departure from traditional online learning approaches which typically focus on estimating unknown service rates to ensure stability. Additionally, our framework extends beyond classical diffusion-scaled optimality results. It addresses the {\em finite-time} scheduling problem and aims to identify the minimax-optimal policy in a nonstationary environment.
%As Table~\ref{tab:comparison} indicates, our framework represents a departure from online learning approaches that focus on estimating unknown service rates to ensure stability, while also extending beyond classical heavy-traffic asymptotic optimality results by addressing finite-time scheduling problem and identifying the minimax-optimal policy in a more general setting.    THIS SENTENCE WAS TOO LONG

\section{Minimax Lower Bound}
\label{sec:lower}

In this section, we derive a finite-time minimax lower bound on the expected total queue length for the system introduced in Section \ref{sec:setup}. Formally, we seek a lower bound for the quantity \eqref{minmax_frame}. To this end, we employ a minimax approach together with Brownian coupling techniques to construct hard yet analytically tractable instances within the model class and analyze their behavior.  The resulting bound explicitly quantifies how queue lengths scale with  the  time horizon $T$, the traffic intensity $\rho$, the capacity parameter $B$ imposed by the scheduling sequences, and the variance parameters $C_{\mathrm{a}}$ and $C_{\mathrm{d}}$ on the arrival processes and random departure fields.

Intuitively, this lower bound reveals the challenging scenarios an adversary can construct under the prescribed capacity and variance constraints, thereby establishing fundamental performance limits for any scheduling policy in parallel queueing systems.

\subsection{The Bound and Its Interpretation}
The following theorem formalizes this lower bound for arbitrary scheduling policies within the model class $\mathcal{M}^ {\rho}(B,C_{\mathrm{a}}, C_{\mathrm{d}})$.

\begin{theorem}[Minimax Lower Bound]\label{thm_lower_bound}
For any scheduling policy, and for scheduling sequences, arrival processes and random departure fields within the model class $\mathcal{M}^ {\rho}(B,C_{\mathrm{a}}, C_{\mathrm{d}})$, there exist an absolute constant $c_1>0$ and a finite constant $\psi_1(n,B,C_{\mathrm{a}},C_{\mathrm{d}})>0$ such that, for
\begin{align*}
\rho \in
\begin{cases}
\left[\big(1-\tfrac{\sqrt{C_{\mathrm{a}}^2+C_{\mathrm{d}}^2}}{B\psi_1(n,B,C_{\mathrm{a}},C_{\mathrm{d}})}\big)\vee 1/2, 1\right],
& \text{if } C_{\mathrm{a}}^2 + C_{\mathrm{d}}^2 > 0,\\
(0, 1], & \text{if } C_{\mathrm{a}}^2 + C_{\mathrm{d}}^2 = 0,
\end{cases}
\end{align*}
the following lower bound holds for all $T \geq \psi_1(n,B,C_{\mathrm{a}},C_{\mathrm{d}})^2\mathbbm{1}_{\{C_{\mathrm{a}}^2+C_{\mathrm{d}}^2>0\}}+1$, 
\begin{align}\label{ieq:lower_bound_dependent_rho}
  \Gamma^ {\rho}(B,C_{\mathrm{a}}, C_{\mathrm{d}},T) \geq 
  c_1\min\left \{n\sqrt{(C_{\mathrm{a}}^2+C_{\mathrm{d}}^2)(T-1)}, \frac{n(C_{\mathrm{a}}^2+C_{\mathrm{d}}^2)}{B(1-\rho)}\right\} +n\rho B.
\end{align}
\end{theorem}
%\vt{discuss the heavy traffic and interior regions - that each of these regimes causes  of the terms in the min to dominate.}

Theorem~\ref{thm_lower_bound} demonstrates that, for any scheduling policy and any system within the model class $\mathcal{M}^\rho(B, C_{\mathrm{a}}, C_{\mathrm{d}})$, the expected total queue length scales at least on the order of $\Omega\big(\min\{\sqrt{T}, (1-\rho)^{-1}\}\big)$, up to problem-dependent constants. 

The first term inside the $\min$ in~\eqref{ieq:lower_bound_dependent_rho} is active in the heavy traffic regime where $\rho \uparrow 1$. It reflects the intrinsic stochastic fluctuations over a finite horizon. When $\rho$ assumes its extremal value of $1$, the lower bound scales as $n \sqrt{(C_{\mathrm{a}}^2 + C_{\mathrm{d}}^2) T}$, and this scaling persists for all $T$. 

The second term in the $\min$ becomes dominant in the interior regime, when $\rho$ is strictly less than~$1$ and $T$ is sufficiently large. It captures the long-term congestion effect and depends explicitly  on the traffic intensity parameter. As $\rho$ approaches~$1$, the $\sqrt{T}$ term dominates for sufficiently large but finite horizons, indicating that the lower bound grows with a rate of at least   $n \sqrt{(C_{\mathrm{a}}^2 + C_{\mathrm{d}}^2) T}$. Beyond this point, the lower bound effectively plateaus, with a value determined by the system parameters $B, C_{\mathrm{a}}, C_{\mathrm{d}}$, and the traffic intensity $\rho$. This dependence of the queue length on the reciprocal of $1-\rho$ in the interior regime  has also been observed in previous work \cite{eryilmaz2012asymptotically}.

Overall, the  two terms in the lower bound together establish a fundamental performance limit that no scheduling policy can surpass.

\subsection{Proof Sketch of Theorem~\ref{thm_lower_bound}}
% \vt{As the proof of Theorem \ref{thm_lower_bound} contains some interesting and novel elements, we sketch its main arguments in the following.}
As the proof of Theorem \ref{thm_lower_bound} contains some interesting and novel elements, we sketch its main arguments in the following.
\begin{proof}[Proof Sketch of Theorem~\ref{thm_lower_bound}]
The proof proceeds in several conceptual steps, reducing the queueing dynamics to a drifted random walk and then approximating it via Brownian motion. The complete details are provided in Appendix~\ref{appendix_thm_lower_bound_general}.

\noindent\textbf{(1) Lower bound: reducing vector processes to total sums.}
From the lower bound on the total queue length (as stated in Lemma~\ref{lem: queuelength_lower_bound}), we have 
\begin{align}\label{ieq:thm1_lower}
 \sum_{i=1}^n Q_i(T)
\geq
\max_{1\leq k\leq T-1}\sum_{t=k}^{T-1}\bigg(\sum_{i=1}^nA_i(t)-\sum_{i=1}^nD_i(t)\bigg)
+\sum_{i=1}^nA_i(T).   
\end{align}
In other words, the total queue length is at least the maximum of cumulative net input (total arrivals minus total departures) plus the final arrivals. Hence, bounding $\mathbb{E}[\sum_{i=1}^n Q_i(T)]$ from below reduces to bounding the expected maximum of the total net-input process.

\noindent\textbf{(2) Hard-instance construction: lower-bounding the net input by a drifted random walk.} Consider a time-invariant scheduling set $\mathcal{D}_\star$ with total capacity $M$, i.e., $\max_{d\in \mathcal{D}_\star}\sum_{i=1}^n d_i=M$, where $M$ depends on the parameter $B$.
We construct i.i.d.\ arrivals $\{A(t)\}_{t\in\mathbb{N}}$ over time, where all components $A_i(t)$ are correlated across $i$, such that the aggregate mean satisfies $\mathbb{E}[X(t)]=\rho M$ and the variance satisfies $\text{Var}(X(t))=n^2C_{\mathrm{a}}^2$, with $X(t)=\sum_{i=1}^n A_i(t)$. 
We then introduce an i.i.d.\ total-departure proxy $\{G(t)\}_{t\in\mathbb{N}}$ that dominates any policy’s total departures, with mean $M$ and variance $n^2C_{\mathrm{d}}^2$.
Replacing true total departures by $G(t)$ yields a valid lower bound in \eqref{ieq:thm1_lower}.
The resulting net-input sequence $\{X(t)-G(t)\}$ is i.i.d., and the problem reduces to bound the expected maximum of a drifted random walk with negative drift $-\theta$, where $\theta=(1-\rho)M$ and variance $\sigma^2=n^2(C_{\mathrm{a}}^2+C_{\mathrm{d}}^2)$.

\noindent\textbf{(3) Probabilistic lower bound via Brownian coupling.}
For a negatively %\vt{negatively} 
drifted Brownian motion $Y_t=\sigma W_t-\theta t$, Lemma~\ref{lem:sharp-lb} gives
\begin{align}
    \mathbb{E}\left[\sup_{0\leq s\leq t}Y_s\right]
\gtrsim
\min\left\{\sigma\sqrt{t},\ \sigma^2/\theta\right\}. \label{eqn:Ys_lb}
\end{align}
By the Brownian coupling in Lemma \ref{lem:KMT}, the partial sum of $k$ i.i.d.\ zero-mean increments with variance $\sigma^2$  can be coupled with $\sigma W_k$ so that their maximum difference in the interval $[0,t]$ is $O(\log t)$ in expectation.
Consequently, as formalized in Lemma~\ref{lemma:bound_negative_drift}, the drifted random walk $S(k)=\sum_{t=1}^{k}(X(t)-G(t))$
inherits the same lower bound scaling as in~\eqref{eqn:Ys_lb} (up to constants). Specifically, for all $t \geq \psi^2$ (a finite constant determined by the characteristics of $X(t)-G(t)$) and for all $\theta$ satisfying $0\le \theta \leq \sigma/\psi$, the expected maximum satisfies
\begin{align*}
\mathbb{E}\left[\max_{0\leq k\leq t}{S(k)}\right]\gtrsim \min\left\{\sigma\sqrt{t}, \sigma^2/\theta\right\}. 
\end{align*}

\noindent\textbf{(4) Application to the queueing instance.}
 For the rectangular scheduling set $\mathcal{D}_\star=\{x\in \mathbb{R}^n : 0\le x_i\le B \ \text{for} \ 1\leq i\leq n\}$,
we have $M=nB$.
Applying the above bound to the constructed instance  yields the desired lower bound in~\eqref{ieq:lower_bound_dependent_rho}.
\end{proof}

In summary, the proof leverages a classical minimax approach by constructing hard instances with arrival rates close to their maximum admissible limits and with significant fluctuations in both arrivals and departures. Unlike traditional minimax arguments that focus on static parameter estimation, our challenge is to control the evolution of a stochastic queueing process. We demonstrate that the expected total queue length is lower bounded by the expected maximum of a random walk with i.i.d.\ negative-mean increments. To analyze this, we introduce innovative Brownian coupling techniques, coupling the random walk with a drifted Brownian motion. This coupling introduces a small  logarithmic error, which is negligible compared to the scaling of the Brownian motion.
We surmise that this proof strategy may be broadly applicable to establishing lower bounds for other dynamic programming-type problems.

Theorem~\ref{thm_lower_bound} reveals that no policy can guarantee a better scaling than $n\sqrt{(C_{\mathrm{a}}^2+C_{\mathrm{d}}^2)T}$ in the finite-time setting. This establishes a fundamental lower bound, serving as a benchmark for evaluating the performance of any scheduling algorithm. Building on this perspective, the next section introduces a policy framework that attains this bound up to universal constants by jointly optimizing the first- and second-order Lyapunov drift, thereby illustrating how the proposed approach bridges the gap in traditional scheduling methods.
\section{Finite‑Time Performance Guarantees}
\label{sec:upper}
\subsection{Optimal Lyapunov Policy}
\label{sec:policy}
Building on the finite-time minimax framework introduced earlier, we develop a novel scheduling policy that optimizes the full one-step derandomized Lyapunov drift at each decision time. By ``Lyapunov drift'', we refer to the one-step difference of the Lyapunov function.
Unlike traditional drift-based methods that optimize only the first-order term of this drift, our policy anticipates future queue dynamics and directly controls them through an additional second-order objective, yielding more balanced and stable queue evolution.

Recall that $Q(t) \in \mathbb{R}_+^n$ denotes the queue-length vector at time $t$. The $\LyapOpt$ policy
  selects the schedule $d(t)$ as the solution to the following optimization problem:
\begin{align}\label{lyapopt_policy}
d^{\LyapOpt}(t) \in \argmin_{d \in \mathcal{D}_{t}} \sum_{i=1}^n \left( \max\{ Q_i(t) - d_i, 0 \} \right)^2,
\end{align}
with ties broken arbitrarily.
This expression represents a one-step derandomized lookahead of the quadratic Lyapunov function $V(x) = \|x\|_2^2$, capturing the anticipated impact of the chosen schedule $d \in \mathcal{D}_{t}$. Specifically, based on the queue dynamics in~\eqref{queue_recursion}, the one-step Lyapunov drift is given by
\begin{align}
  \Delta V(t) &=\mathbb{E}\left[V\big(Q(t+1)-A(t+1)\big)-V\big(Q(t)-A(t)\big)\mid \mathcal{F}_t\right]\nonumber\\
    &=\mathbb{E}\left[\sum_{i=1}^n(\max\{Q_i(t)-D_i(t),0\})^2 \middle | \mathcal{F}_t\right]-\sum_{i=1}^n(Q_i(t)-A_i(t))^2.\label{eq:drift}
    \end{align}
Replacing the random departure $D(t)$ with its conditional mean $d(t)$ yields a deterministic surrogate objective in \eqref{lyapopt_policy}. As shown in Appendix~\ref{appendix_thm_upper}, this replacement only incurs an additive variance correction term:
\begin{align}\label{ieq:det_d}
    \mathbb{E}\left[\sum_{i=1}^n(\max\{Q_i(t)-D_i(t),0\})^2\middle| \mathcal{F}_t\right]\leq \mathbb{E}\left[\sum_{i=1}^n(\max\{Q_i(t)-d_i(t),0\})^2\middle| \mathcal{F}_t\right]+\sum_{i=1}^n\text{Var}(D_i(t)).
\end{align}

Before presenting the main performance guarantee of this policy, we first develop a general Lyapunov drift analysis applicable to any scheduling policy. 

In the following, for notational simplicity and without risk of ambiguity, we write $\mathbb{E}^\Phi_{(A, S)}$ as $\mathbb{E}$. Then, for any scheduling sequence, arrival processes and random departure fields in $\mathcal{M}^ {\rho}(B,C_{\mathrm{a}}, C_{\mathrm{d}})$, and any scheduling policy $\Phi$, consider the one-step Lyapunov drift as follows:
\begin{align}
   \Delta V(t) &=\mathbb{E}\left[\sum_{i=1}^n(\max\{Q_i(t)-D_i(t),0\})^2 \middle | \mathcal{F}_t\right]-\sum_{i=1}^n(Q_i(t)-A_i(t))^2\nonumber\\
    &\leq \mathbb{E}\left[\sum_{i=1}^n(\max\{Q_i(t)-d_i(t),0\})^2\middle| \mathcal{F}_t\right]+\sum_{i=1}^n\text{Var}(D_i(t))-\sum_{i=1}^n(Q_i(t)-A_i(t))^2 \nonumber\\
    &\leq \mathbb{E}\left[\sum_{i=1}^n(Q_i(t)-d_i(t))^2\middle| \mathcal{F}_t\right]-\sum_{i=1}^n(Q_i(t)-A_i(t))^2+\sum_{i=1}^n\text{Var}(D_i(t))\nonumber\\
   &=f(Q(t),d(t))+r(Q(t),A(t))+ \sum_{i=1}^n\text{Var}(D_i(t)), \label{eqn:Delta_Vt}
\end{align}
where 
\begin{align}
 f(Q(t),d) &:= \underbrace{\mathbb{E}\left[2 \sum_{i=1}^n Q_i(t)\big(\lambda_i(t) - d_i\big)\middle| \mathcal{F}_t\right]}_{\text{first-order term}}+\underbrace{\sum_{i=1}^n\left(d_i^2 - \lambda_i(t)^2\right)}_{\text{second-order term}}, \label{equ:f_drift+quadratic} 
\end{align}
and
\begin{align}\label{eq:variance_A}
 \mathbb{E}[r(Q(t),A(t))]=\sum_{i=1}^n\text{Var}(A_i(t)).   
\end{align}
 The complete arguments for both results \eqref{eqn:Delta_Vt} and \eqref{eq:variance_A} can be found in Appendix~\ref{appendix_thm_upper}. Taking expectations, summing from  $t = 1$ to $T - 1$ and applying Jensen’s and Cauchy--Schwarz inequalities (see Appendix~\ref{appendix_thm_upper} for the detailed derivation), we obtain:
\begin{align}\label{ieq:upp_bound}
   \mathbb{E}\Bigg[\sum_{i=1}^n Q_i(T) \Bigg]\leq 
   n\sqrt{\sum_{t=1}^{T-1} \frac{1}{n}\ \mathbb{E} [f(Q(t),d(t))] +(T-1)(C_{\mathrm{a}}^2+C_{\mathrm{d}}^2)}+\sum_{i=1}^n \mathbb{E}[A_i(T)].
\end{align}
We now present the performance guarantee of the 
$\LyapOpt$ policy.  %the proof is provided in Appendix~\ref{appendix_thm_lyaopt}.
\begin{theorem}[Finite-Time Performance of the $\LyapOpt$ Policy]
\label{thm:lyapopt}
Within the model class $\mathcal{M}^ {\rho}(B,C_{\mathrm{a}}, C_{\mathrm{d}})$ for $\rho\in (0,1]$, the $\LyapOpt$ policy achieves the following bound on the expected total queue length: 
 \begin{align}
   \mathbb{E}^{\LyapOpt}_{(A, S)}\Bigg[\sum_{i=1}^n Q_i(T) \Bigg]&\leq 
  n \sqrt{\sum_{t=1}^{T-1} \frac{1}{n}\ \mathbb{E}^{\LyapOpt}_{(A, S)}\Big[\min_{d\in \mathcal{D}_{t}}f(Q(t),d)\Big]  + (C_{\mathrm{a}}^2+C_{\mathrm{d}}^2)(T-1)}\nonumber\\
  &\qquad+\sum_{i=1}^n \mathbb{E}_{(A, S)}[A_i(T)] \label{ieq:upp_bound_lyapopt}
 \end{align}
with $f(Q(t),d)$ given by \eqref{equ:f_drift+quadratic}. Moreover, for any fixed $\rho\in (0,1]$, suppose that $\lambda(t) \in \mathrm{star}(\mathcal{D}_t): = \{\alpha x : x \in \mathcal{D}_t,\, \alpha \in [0,1]\}$ for all $t\in \mathbb{N}$, then the expected total queue length is bounded by
\begin{align}\label{upp_bound:achieve_low_bound}
 \mathbb{E}^{\LyapOpt}_{(A, S)}\Bigg[\sum_{i=1}^n Q_i(T) \Bigg] \leq 
   n \sqrt{ (C_{\mathrm{a}}^2+C_{\mathrm{d}}^2) (T-1)}+\sum_{i=1}^n \mathbb{E}_{(A, S)}[A_i(T)]. 
\end{align}
\end{theorem}

\begin{proof}
By inequality~\eqref{ieq:det_d} and the definition of $\LyapOpt$ in~\eqref{lyapopt_policy}, the one-step Lyapunov drift satisfies
\begin{align}
    &\Delta V^\LyapOpt(t) \nonumber\\
    &\leq \mathbb{E}\left[\sum_{i=1}^n(\max\{Q_i(t)-d_i^\LyapOpt(t),0\})^2\middle| \mathcal{F}_t\right]+\sum_{i=1}^n\text{Var}(D_i^\LyapOpt(t))-\sum_{i=1}^n(Q_i(t)-A_i(t))^2\nonumber \\
    &= \min_{d\in \mathcal{D}_{t}}\bigg(\sum_{i=1}^n(\max\{Q_i(t)-d_i,0\})^2 \bigg) +\sum_{i=1}^n\text{Var}(D_i^\LyapOpt(t))-\sum_{i=1}^n(Q_i(t)-A_i(t))^2.\label{intermediate_term}
\end{align}
Then, by~\eqref{eqn:Delta_Vt}, we obtain
\begin{align}
    \Delta V^\LyapOpt(t) \leq \min_{d\in \mathcal{D}_{t}} f(Q(t),d) + r(Q(t),A(t))+nC_{\mathrm{d}}^2. \label{lyapopt_drift}
\end{align}
Applying the same argument as in the proof of~\eqref{ieq:upp_bound} yields~\eqref{ieq:upp_bound_lyapopt}.

Fix any $\rho\in (0,1]$. If $\lambda(t) \in \text{star}(\mathcal{D}_t)$ for all $t\in \mathbb{N}$, then define $d^{(\rho)}(t):=\lambda(t)/\alpha^{(\rho)} \in \mathcal{D}_t$ for some $\alpha^{(\rho)}<1$. Substituting $d=d^{(\rho)}(t)$ into~\eqref{intermediate_term} gives,  
\begin{align*}
   \Delta V^\LyapOpt(t)&\leq \sum_{i=1}^n (\max\{Q_i(t)-d^{(\rho)}_i(t),0\})^2  +nC_{\mathrm{d}}^2-\sum_{i=1}^n(Q_i(t)-A_i(t))^2\\
   &\leq \sum_{i=1}^n(\max\{Q_i(t)-\alpha^{(\rho)} d^{(\rho)}_i(t),0\})^2 +nC_{\mathrm{d}}^2-\sum_{i=1}^n(Q_i(t)-A_i(t))^2\\
   &= f(Q(t), \alpha^{(\rho)} d^{(\rho)}(t)) + r(Q(t),A(t))+nC_{\mathrm{d}}^2\\
   &=r(Q(t),A(t))+nC_{\mathrm{d}}^2,
\end{align*}
where the last equality follows from $f(Q(t), \alpha^{(\rho)} d^{(\rho)}(t))=0$.
Taking expectations and applying the same telescoping argument as in the proof of~\eqref{ieq:upp_bound} yields the bound in~\eqref{upp_bound:achieve_low_bound}.
\end{proof}

From \eqref{ieq:upp_bound_lyapopt}, it is evident that the key advantage  of the $\LyapOpt$ policy lies in its simultaneous optimization of the first-  and second-order terms by minimizing the full Lyapunov drift. This joint optimization enables the departure vector to align more precisely with the arrival rate vector, particularly when the scheduling set exhibits geometric irregularities.

As shown in~\eqref{upp_bound:achieve_low_bound}, when the arrival rate lies within the star hull of the scheduling set, the $\LyapOpt$ policy perfectly aligns with the arrival rate vector (see Eqn.~\eqref{equ:f_drift+quadratic}), regardless of the underlying geometry of the scheduling set. Under this setting, $\LyapOpt$ attains, up to constant factors, the first term in the $\min$ in the fundamental minimax lower bound in \eqref{ieq:lower_bound_dependent_rho}. This demonstrates that in the finite-time, heavy-traffic regime, $\LyapOpt$ is optimal up to constants. In contrast, in the same heavy-traffic regime, $\MaxWeight$ is not optimal in finite-time (see Theorem~\ref{thm:lower_bound_maxweight} in the next section); rather it is optimal in the diffusion-scaled regime as demonstrated in Stolyar~\cite{stolyar2004maxweight}. %\yj{could we change ``as a diffusion-scaled process'' to ``after diffusion scaling''? I think saying ``as a process'' is not clear} \vt{``after diffusion scaling'' also sounds strange and informal. how about ``optimal in the diffusion-scaled regime'' to be consistent with the intro?}

%Under this setting, it attains, up to constant factors, the first term inside the min in the fundamental lower bound~\eqref{ieq:lower_bound_dependent_rho}, thereby establishing its optimality \yj{(Here I am unsure how to phrase it. we should say that ``LyapOpt is optimal in the heavy-traffic regime.'' However, in the abstract we emphasize that LyapOpt is optimal in the finite-time regime, whereas MaxWeight is optimal in the heavy-traffic regime. (Strictly speaking, MaxWeight is optimal only in the sense of diffusion-scaled process in the heavy-traffic regime.) maybe we should highlight that MaxWeight is optimal only in the diffusion-scaled process, and then state that ``LyapOpt is optimal in the heavy-traffic regime.''? But finite-time regime is essential)}

%thereby establishing its optimality in the finite-horizon regime. Moreover, as the traffic intensity~$\rho$ increases toward one (i.e., in the heavy-traffic regime), the policy maintains its lower-bound performance over increasingly long horizons. \vt{``maintains its lower-bound performance'' sounds strange. do you mean that when $\rho\uparrow 1$, the performance of $\LyapOpt$ also matches the lower bound up to constants as $T\to\infty$? also ``over long horizons'' somehow contradicts the claimed ``finite-time'' optimality of $\LyapOpt$}

\subsection{A Limitation of MaxWeight}
\label{sec:mw-gap}

The MaxWeight policy is widely recognized for its throughput optimality and asymptotic guarantees. Nevertheless, its performance in the finite-time regime is much less understood. In this subsection, we revisit established upper-bound results from the literature~\citep{tassiulas1992stability,stolyar2004maxweight,neely2010stochastic,walton2021learning} and present a complementary lower bound that clarifies the necessity of the factors appearing in this bound, thereby providing a more complete understanding of the policy’s fundamental limitations.

\subsubsection{Upper Bound of MaxWeight}

Consider the MaxWeight policy, which selects the schedule according to
\[
d^{\MaxWeight}(t) \in \argmax_{d \in \mathcal{D}_{t}} \langle Q(t), d \rangle.
\]
By optimizing the first-order Lyapunov drift specified in~\eqref{equ:f_drift+quadratic}, MaxWeight gives priority to queues with larger backlogs. The finite-time upper bound on the total queue length under MaxWeight, within the model class $\mathcal{M}^ {\rho}(B,C_{\mathrm{a}}, C_{\mathrm{d}})$, is given below.

\begin{theorem}[Upper Bound of MaxWeight Policy]\label{thm:upper_bound_maxweight}
 Within the model class $\mathcal{M}^ {\rho}(B,C_{\mathrm{a}}, C_{\mathrm{d}})$ for $\rho\in (0,1]$,  the MaxWeight policy satisfies the following upper bound on the expected total queue length:
\begin{align}\label{ieq:upp_bound_maxweight}
   \mathbb{E}^\MaxWeight_{(A, S)}\left[\sum_{i=1}^{n} Q_i(T)\right]\leq 
   n\sqrt{(B^2+C_{\mathrm{a}}^2+C_{\mathrm{d}}^2)(T-1)}+\sum_{i=1}^n \mathbb{E}_{(A, S)}[A_i(T)].
 \end{align}
\end{theorem}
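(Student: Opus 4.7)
The plan is to apply the general Lyapunov drift bound \eqref{ieq:upp_bound} with $D(t) = D^{\text{MaxWeight}}(t)$ and to show that $\mathbb{E}[f(Q(t), D^{\text{MaxWeight}}(t))] \leq nB^2$ for every $t$. Substituting this into \eqref{ieq:upp_bound} immediately gives
\[
\mathbb{E}\Big[\sum_{i=1}^n Q_i(T)\Big] \leq n\sqrt{(T-1)B^2 + (T-1)C^2} + \sum_{i=1}^n \mathbb{E}[A_i(T-1)],
\]
which is exactly \eqref{ieq:upp_bound_maxweight}. So the whole proof reduces to controlling the first- and second-order parts of $f(Q(t), D^{\text{MaxWeight}}(t))$ defined in \eqref{equ:f_drift+quadratic}.

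For the first-order term, I would use the classical throughput-optimality argument for MaxWeight. By definition, $D^{\text{MaxWeight}}(t)$ maximizes $\langle Q(t), d\rangle$ over $d \in \mathcal{D}$, and since this is a linear functional, the maximum over $\mathcal{D}$ equals the maximum over $\text{conv}(\mathcal{D})$. The capacity-region assumption $\lambda(t-1) \in \Pi$ guarantees the existence of some $\tilde d \in \text{conv}(\mathcal{D})$ with $\lambda(t-1) \leq \tilde d$ componentwise. Since $Q(t) \geq \mathbf{0}$, this yields
\[
\langle Q(t), \lambda(t-1)\rangle \;\leq\; \langle Q(t), \tilde d\rangle \;\leq\; \langle Q(t), D^{\text{MaxWeight}}(t)\rangle,
\]
so $2\sum_{i=1}^n Q_i(t)(\lambda_i(t-1) - D_i^{\text{MaxWeight}}(t)) \leq 0$.

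For the second-order term, I would simply drop the negative contribution $-\lambda_i(t-1)^2$ and use the model-class constraint $\frac{1}{n}\sum_i d_i^2 \leq B^2$ for any $d \in \mathcal{D}$:
\[
\sum_{i=1}^n \big(D_i^{\text{MaxWeight}}(t)^2 - \lambda_i(t-1)^2\big) \;\leq\; \sum_{i=1}^n D_i^{\text{MaxWeight}}(t)^2 \;\leq\; nB^2.
\]
Combining the two bounds gives $f(Q(t), D^{\text{MaxWeight}}(t)) \leq nB^2$ almost surely, hence in expectation. The conclusion then follows by substitution into \eqref{ieq:upp_bound}.

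The only step requiring real care is the capacity-region comparison used to kill the first-order term — specifically, the passage from $\lambda(t-1) \in \Pi$ to a dominating element of $\text{conv}(\mathcal{D})$ and then from $\text{conv}(\mathcal{D})$ back to $\mathcal{D}$ via linearity. Everything else is straightforward bookkeeping. No additional tail or concentration inequalities are needed beyond what is already embedded in \eqref{ieq:upp_bound}; in particular, the $C^2$ contribution comes purely from the variance term $r(Q(t), A(t-1))$ in \eqref{eqn:Delta_Vt}, not from any policy-specific analysis.
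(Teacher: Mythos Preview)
Your proposal is correct and essentially identical to the paper's own proof: both bound the first-order term of $f(Q(t),D^{\text{MaxWeight}}(t))$ by zero via the capacity-region argument (your passage through $\text{conv}(\mathcal{D})$ is just an explicit unpacking of the paper's remark that $D^{\text{MaxWeight}}(t)$ maximizes $\langle Q(t),d\rangle$ over $\Pi$), bound the second-order term by $nB^2$ using the model-class constraint, and substitute into \eqref{ieq:upp_bound}.
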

\begin{proof}
    Recall \eqref{equ:f_drift+quadratic}. The MaxWeight policy minimizes the first-order term of $f(Q(t),d
    )$ and ensures that this term remains non-positive at each step, since $\lambda(t) \in \rho\Pi(\mathcal{D}_t)$ and, by definition of $\Pi(\mathcal{D}_t)$, $d^{\MaxWeight}(t)$ maximizes $\langle Q(t), d \rangle$ over all $d \in \Pi(\mathcal{D}_t)$. Besides,
    the second-order term of $f(Q(t),d
    )$, namely, $\sum_{i=1}^n\left(d_i^2 - \lambda_i(t)^2\right)$, is upper bounded by $n B^2$. %\vt{should the second term in (17) be $nB^2$}.\yj{sorry to make you confused}
    Substituting into \eqref{ieq:upp_bound}, we obtain~\eqref{ieq:upp_bound_maxweight}.
\end{proof}
The proof of the upper bound of the MaxWeight policy in Theorem \ref{thm:upper_bound_maxweight} illustrates a key limitation of MaxWeight and, more broadly, of drift-based approaches. Specifically, these methods focus solely on optimizing the first-order component of the Lyapunov drift, overlooking the effects of the second-order term. This is particularly significant because MaxWeight consistently chooses extreme points of the scheduling set, which may reduce its sensitivity to the nuanced direction and length of the arrival rate vector as captured by the second-order term. As a result, it may potential lead to  queue accumulation. %In particular, since MaxWeight always selects extreme points of the scheduling set, it may be less responsive to the shape of the arrival rate vector captured by the second-order term, potentially leading to queue accumulation.

\subsubsection{Lower Bound of MaxWeight for Dimension 2}
The upper bound in Theorem \ref{thm:upper_bound_maxweight} suggests that MaxWeight’s performance depends on the number of queues $n$, the variance parameters $C_{\mathrm{a}}$ and $C_{\mathrm{d}}$, and the capacity parameter $B$. In the heavy-traffic regime, while the lower bound in Theorem~\ref{thm_lower_bound} highlights the roles of 
$n$, $C_{\mathrm{a}}$ and $C_{\mathrm{d}}$, the following lower bound further reveals the inherent dependence of the MaxWeight policy on $B$.

\begin{theorem}[Lower Bound of MaxWeight Policy]\label{thm:lower_bound_maxweight}
Let $n=2$, and consider the family of model classes 
$\{\mathcal{M}^{\rho}(B,C_{\mathrm{a}},C_{\mathrm{d}}) : \rho \in (0,1]\}$.
 Let $c_1>0$ and $\psi_1(n, B, C_{\mathrm{a}}, C_{\mathrm{d}})$ be the constants defined in Theorem~\ref{thm_lower_bound}. Then for all  $T \geq \psi_1(n,B,C_{\mathrm{a}},C_{\mathrm{d}})^2\mathbbm{1}_{\{C_{\mathrm{a}}^2+C_{\mathrm{d}}^2>0\}}+9$, the MaxWeight policy satisfies
\begin{align}\label{ieq:maxweight_lowbound}
\liminf_{\rho\uparrow 1} \sup_{(\mathcal{D}, A, S) \in \mathcal{M}^ {\rho}(B,C_{\mathrm{a}}, C_{\mathrm{d}})} \mathbb{E}^{\MaxWeight}_{(A, S)}\left[\sum_{i=1}^{n} Q_i(T)\right]\geq 
    c_1\sqrt{(C_{\mathrm{a}}^2+C_{\mathrm{d}}^2)(T-1)}+\frac{BT^{\frac{1}{3}}}{4\sqrt{3}}.
\end{align}
\end{theorem}
% \vt{The way that this theorem is stated is not clear. In the statement, we seem to fix $\rho \in (0,1]$. But in (19), we take the limit as $\rho\uparrow 1$. Can we fix this inconsistency?}
\begin{proof}[Proof Sketch] The formal proof is provided in Appendix~\ref{appendix_thm_lower_maxweight}. Here we outline the main idea.
We construct a family of deterministic instances $\mathcal{I}_{b,q} := (\mathcal{D}, A, S) \in \mathcal{M}^{1}(B,C_{\mathrm{a}},C_{\mathrm{d}})$.
The scheduling sequence $\mathcal{D}$ is time-invariant with $\mathcal{D}_t=\mathcal{D}_\star$ for all $t$; the arrival process $A$ is deterministic;  and the random departure field satisfies $S_d(t)=d$ for each $t$.
That is, if the scheduling policy selects $d(t)\in \mathcal{D}_t$, then the actual departure is $D(t)=d(t)$.
The set $\mathcal{D}_\star$  and the process $A$ are illustrated in Figure~\ref{fig:maxweight_lowerbound_q}.
% \begin{figure}[htbp]
%   \centering
%   \includegraphics[width=0.4\textwidth]{maxweight_lowerbound_q.pdf}
%   \caption{Construction of $\mathcal{I}_{b,q}$ \vt{the star in the $\mathcal{D}_\star$ is too small. do you want me to make a better figure for you?}} \yj{That would be great, thank you! I used Adobe Illustrator but couldn’t find an appropriate star symbol. There’s another star in Figure 8 on page 43.}
%   \label{fig:maxweight_lowerbound_q}
% \end{figure}
\begin{figure}[htbp]
\centering
\setlength{\unitlength}{.4mm}
\begin{picture}(160, 115)
\put(0, 10){\vector(1, 0){150}}
\put(10, 0){\vector(0,1){110}}
%\put(50, 30){\line(1, 0){55}}
%\put(30, 50){\line(0,1){55}}
\put(150,8){ $Q_1$}
\put(-8, 105){ $Q_2$}
\multiput(10,60)(4,0){8}{\line(1,0){2}}
\put(-1, 75){$q$}
\multiput(40,10)(0,4){13}{\line(0,1){2}}
\put(135, 0){$qb$}
\put(39,1){$q$}
\put(1,1){$0$}
\put(-11, 56){$q\frac{b-1}{b}$}
\put(40, 60){\circle*{4}}
\put(75, 45){$\mathcal{D}_\star$}
\put(43, 62){$A(t)$}
\thicklines
%\linethickness{8pt}
\put(10, 75){\line(2, -1){130}}
  \end{picture}
  \caption{Construction of the instance $\mathcal{I}_{b,q}$ }
  \label{fig:maxweight_lowerbound_q}
\end{figure}
Formally,
\begin{align}
    \mathcal{D}_t &=\mathcal{D}_\star= \big\{d \in \mathbb{R}^2: d=x(qb,0)+(1-x)(0,q), 0\leq x \leq 1\big\}, \ \forall t \in \mathbb{N}_0; \nonumber\\
    A(t)&=q\Big(1, \frac{b-1}{b}\Big) \ \text{for all} \ t \geq 2, \ \text{and} \ A(1)=q\Big(1, \frac{b-1}{b}-\varepsilon\Big);\nonumber \\
     S_d(t)&=d,  \ \forall d \in \mathcal{D}_\star,\label{exp_qb}
\end{align}  
where $b>1$ is a rational constant and $\varepsilon>0$ is a small irrational constant (introduced to avoid tie-breaking).
To ensure that the instance lies within the model class $\mathcal{M}^ {1}(B,C_{\mathrm{a}}, C_{\mathrm{d}})$,  setting $q = \frac{\sqrt{2}B}{b}$. We establish the lower bound for the family of instances $\{\mathcal{I}_{b,\frac{\sqrt{2}B}{b}}: b\geq 6\}$. 

Next, we prove the continuity of the expected total queue length under the $\MaxWeight$ policy in $\rho$ in a neighborhood around $1$, where $\rho$ is a scalar that scales the arrival process. By combining this continuity result with the lower bound obtained from the deterministic construction above and with Theorem~\ref{thm_lower_bound}, we obtain the desired finite-time lower bound for the $\MaxWeight$ policy.
\end{proof}

The $BT^{\frac{1}{3}}$ term in Theorem~\ref{thm:lower_bound_maxweight} highlights that, in the heavy-traffic regime ($\rho \uparrow 1$), there exists hard instances, especially those with asymmetric scheduling sets, within $\mathcal{M}^ {\rho}(B,C_{\mathrm{a}}, C_{\mathrm{d}})$ for which the performance of the MaxWeight policy exhibits an intrinsic dependence on the scheduling set's capacity parameter $B$. This establishes a fundamental performance limitation of MaxWeight in the finite-time, heavy-traffic regime.

This result does not contradict MaxWeight's classical optimality in the diffusion-scaled, heavy-traffic regime~\cite{stolyar2004maxweight}. First, our analysis characterizes its performance over a model class rather than a specific instance considered in~\cite{stolyar2004maxweight}. Second, even for a fixed instance, as shown in the following corollary, the dependence on $B$ appears only in the finite horizon and therefore vanishes in the diffusion-scaled regime~\cite{reiman1984open}.

In order to quantify the finite-time gap between $\LyapOpt$ and $\MaxWeight$, we focus on instances with $C_{\mathrm{a}}=C_{\mathrm{d}}=0$. The following corollary analyzes this gap by comparing the lower bound for MaxWeight (from Theorem \ref{thm:lower_bound_maxweight}) with the performance guarantee of $\LyapOpt$. The proof is provided  in Appendix~\ref{app:coro_gap}.
\begin{corollary}[Performance Gap between $\LyapOpt$ and $\MaxWeight$]\label{coro:gap}
    There exists a family of instances in $\mathcal{M}^1(B,0,0)$ with $B\geq 3\sqrt{2}$, for which the expected total queue lengths under $\LyapOpt$ and  $\MaxWeight$ satisfy: 
\begin{align*}
 \sum_{i=1}^2 Q_i^{\LyapOpt}(T) &\leq 2-\frac{1}{\sqrt{2}B}, \qquad   T\geq 1;\\
   \sum_{i=1}^2 Q_i^{\MaxWeight}(T)& \geq 
     \frac{\sqrt{BT}}{2^{\frac{5}{4}}},  \qquad\bigg\lceil \frac{2B^2}{\sqrt{2}B-1}\bigg\rceil \leq T\leq \bigg\lceil \Big(\frac{\sqrt{2}B}{2}-1\Big)^3\bigg\rceil+1.
\end{align*}
\end{corollary}
\begin{proof}[Proof Sketch]
    We consider the instance $\mathcal{I}_{\sqrt{2}B, 1}$ given in~\eqref{exp_qb}, where $b=\sqrt{2}B$ and $q=1$. 
Since $\lambda(t) = A(t) \in \mathcal{D}_\star$, 
We apply the bound in~\eqref{upp_bound:achieve_low_bound} for $\LyapOpt$, together with the bound for MaxWeight derived in the proof of Theorem~\ref{thm:lower_bound_maxweight}, which is stated in \eqref{lower_maxweight:C=0} in Appendix~\ref{appendix_thm_lower_maxweight}.
\end{proof}

\begin{figure}[htbp]
  \centering
  \includegraphics[width=0.4\textwidth]{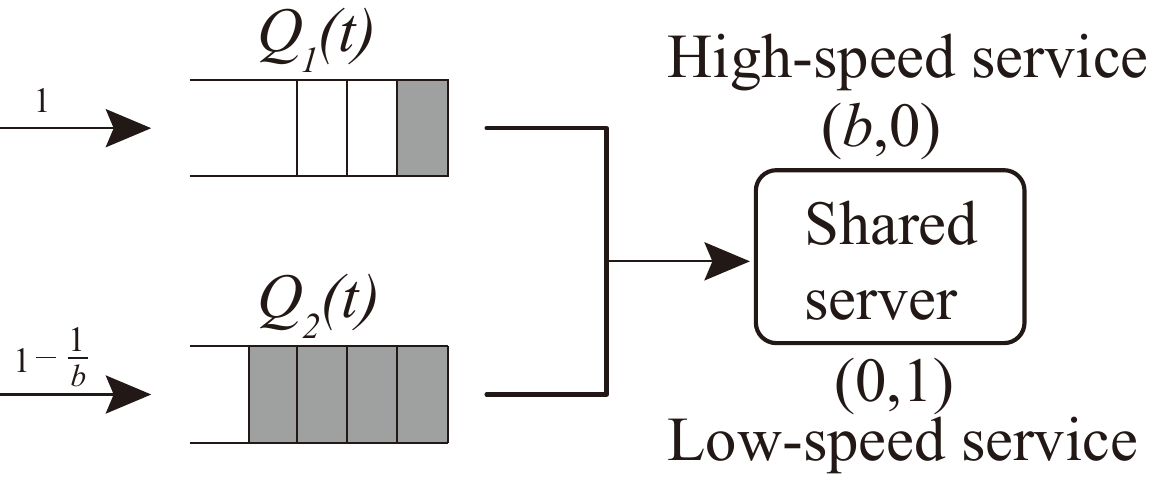}
  \caption{Asymmetric-service shared queueing model
}
  \label{fig:two_queues}
\end{figure}

The model used in the proof of the lower bound in~\eqref{exp_qb} ($q=1$)  and illustrated in Figure~\ref{fig:two_queues} is not contrived.  Instead, it frequently arises in real-world systems such as wireless networks and data centers~\citep{massoulie1999bandwidth, d2023priority}, where two queues---$Q_1(t)$ and $Q_2(t)$---have similar arrival rates $(1,\ 1 - \frac{1}{b})$ with $b \gg 1$, and share a resource whose service shifts between rate $b$ for $Q_1$ and rate $1$ for $Q_2$. MaxWeight tends to over-prioritize $Q_1$ by selecting extreme points, causing backlog in the harder-to-serve $Q_2$ and revealing its finite-time inefficiency. In contrast, $\LyapOpt$ adapts to the arrival rate and avoids backlog in the deterministic setting.

%Although in a purely deterministic setting, this $\sqrt{B\,T}$ gap would eventually narrow (absent an arbitrarily small noise injection $C>0$), our experiments for realistic, finite $T$ and $B$ consistently exhibit a substantial performance gap scaling on the order of $\sqrt{B\,T}$, as shown in Figures~\ref{fig:performance_fixed_b} and~\ref{fig:performance_vs_b}.

Consequently, the MaxWeight policy, which focuses solely on minimizing the first-order Lyapunov drift, inherently overlooks higher-order terms that are essential for optimal finite-time performance. This oversight results in excessively long queues, especially in scenarios with asymmetric scheduling sets over a finite horizon. Our theoretical findings directly inspire the creation of improved scheduling algorithms that explicitly account for second-order Lyapunov drift terms, leading to better finite-time outcomes.

%Consequently, the MaxWeight policy, which optimizes solely the first-order Lyapunov drift, inherently neglects higher-order terms crucial for achieving optimal finite-time performance. This neglect leads to excessive queue lengths, which is particularly pronounced under asymmetric scheduling sets in the finte horizon regime.  Our theoretical insights directly motivate the development of enhanced scheduling algorithms that explicitly incorporate second-order Lyapunov drift terms. 

\section{Stability of LyapOpt in the Interior of the Capacity Region}
\label{sec:stability}
In this section, we demonstrate that $\LyapOpt$  inherits the stability traditionally guaranteed by MaxWeight. Specifically, stability means that the queue length process does not blow up to infinity; it is positive recurrent in the stationary setting and uniformly bounded in expectation over time in the nonstationary setting.

To establish this result, we employ a standard Lyapunov drift analysis. Intuitively, $\LyapOpt$ directly minimizes a derandomized one-step Lyapunov drift, which is therefore no greater than the first-order drift term under MaxWeight, up to additional variance and capacity terms. Consequently, all stability results obtained for MaxWeight through drift analysis also apply to $\LyapOpt$. Formally, under the $\LyapOpt$ policy, within the model class $\mathcal{M}^ {\rho}(B,C_{\mathrm{a}}, C_{\mathrm{d}})$, the one-step Lyapunov drift satisfies the following sequence of inequalities (the detailed proof is provided in Appendix~\ref{app:stability}).
\begin{align*}
    &\mathbb{E}^{\LyapOpt}_{(A,S)}\left[V(Q(t+1))-V(Q(t))\mid Q(t)=x\right]\\
    &\leq \min_{d\in \mathcal{D}_\star} \underbrace{\sum_{i=1}^n\left((\max\{x_i(t)-d_i,0\})^2-x_i(t)^2+2x_i(t)\lambda_i(t+1)+\lambda_i(t+1)^2\right)}_{\text{derandomized one-step Lyapunov drift}}+n(C_{\mathrm{a}}^2+C_{\mathrm{d}}^2)\\
    &\leq \min_{d\in \mathcal{D}_\star} \sum_{i=1}^n\left((x_i(t)-d_i)^2 -x_i(t)^2+2x_i(t)\lambda_i(t+1)+\lambda_i(t+1)^2\right)+n(C_{\mathrm{a}}^2+C_{\mathrm{d}}^2)\\
    &\leq \underbrace{\min_{d\in \mathcal{D}_\star}\sum_{i=1}^n2x_i(t)(\lambda_i(t+1)-d_i)}_{\text{first-order drift term (MaxWeight)}} +2nB^2+n(C_{\mathrm{a}}^2+C_{\mathrm{d}}^2).\\
\end{align*}

We first consider the classical stationary setting, where the scheduling set $\mathcal{D}_t = \mathcal{D}_\star$ for all $t$, 
$A(t)$ is i.i.d.\ across $t$, and $S_d(t)$ is i.i.d.\ across $t$ for each $d \in \mathcal{D}_\star$. In this setting, the induced queue-length process $\{Q(t)\}_{t\in\mathbb{N}_0}$ evolves as a Markov chain. To prove that  $\{Q(t)\}_{t\in\mathbb{N}_0}$ is stable, we apply the classical Foster--Lyapunov criterion (see Lemma \ref{lem:foster-lyapunov} in Appendix~\ref{app:stability}), which is commonly used in analyzing the stability of MaxWeight.

The stability result is stated in the following theorem. To avoid unnecessary complexity, we focus on a countable state space in which all processes take integer values. The general case also holds under mild assumptions using standard arguments. The formal proof is deferred to Appendix~\ref{app:stability}.

\begin{theorem}[Stability of the $\LyapOpt$ Policy in the Stationary Setting]
\label{thm:stability_of_lyapopt}
Suppose that all processes take integer values and are stationary; 
namely, $\mathcal{D}_t = \mathcal{D}_\star$ for all $t$, 
$A(t)$ is i.i.d.\ across $t$, and $S_d(t)$ is i.i.d.\ across $t$ for each $d \in \mathcal{D}_\star$. 
Then, within the model class $\mathcal{M}^{\rho}(B, C_{\mathrm{a}}, C_{\mathrm{d}})$ for $\rho \in (0,1)$, 
the queue length process $\{Q(t)\}_{t \in \mathbb{N}_0}$ under the $\LyapOpt$ policy is stochastically stable, 
in the sense that it reaches the recurrent states with probability~1 and every recurrent state is positive recurrent.
\end{theorem}

This theorem establishes that when the traffic intensity $\rho < 1$, $\LyapOpt$ retains the positive-recurrence property of MaxWeight in the stationary regime. This behavior arises from the inherent nature of $\LyapOpt$, as it minimizes the full one-step Lyapunov drift.

We next establish the stability of $\LyapOpt$ in the nonstationary setting. To this end, we introduce the following assumption, which ensures that for each queue, there exists at least one schedule capable of serving it at a minimum rate of $\alpha$, uniformly over all time slots.
\begin{assumption}[Uniform coordinate coverage] 
\label{assump:ucc}
There exists a constant $\alpha>0$ such that
\begin{equation*}
\label{eq:ucc}
\forall \, t\in\mathbb{N}_0, \forall\, j\in\{1,\dots,n\}, \exists \ d(t)\in \mathcal{D}_t\ \text{such that}\ d_j(t)\geq \alpha.
\end{equation*}
\end{assumption}
With this assumption, we can now establish the stability of $\LyapOpt$ in the nonstationary setting. The proof is presented in Appendix~\ref{app:proof_thm6}.

\begin{proposition}
[Stability of the $\LyapOpt$ Policy in the Nonstationary Setting]
\label{prop:stability_of_lyapopt_adv}
 Under Assumption~\ref{assump:ucc}, within the model class $\mathcal{M}^ {\rho}(B,C_{\mathrm{a}}, C_{\mathrm{d}})$ for $\rho \in (0,1)$,  the queue-length process $ \{Q(t)\}_{t\in\mathbb{N}_0}$ under the $\LyapOpt$ policy is stable, in the sense that for $T\geq 1$,
\begin{align}\label{uni_upper_bound}
    \frac{1}{T}\sum_{t=1}^{T}\mathbb{E}^{\LyapOpt}_{(A, S)}\Bigg[\sum_{i=1}^n Q_i(t) \Bigg]\leq \frac{n^2(B^2 + C_{\mathrm{a}}^2+C_{\mathrm{d}}^2)}{2(1-\rho)\alpha}.
\end{align}
\end{proposition}

Proposition~\ref{prop:stability_of_lyapopt_adv} shows that, in the general nonstationary setting, the $\LyapOpt$ policy ensures that the time-averaged expected queue lengths remain uniformly bounded, a property that also holds for the MaxWeight policy.

Together, Theorem~\ref{thm:stability_of_lyapopt} and Proposition~\ref{prop:stability_of_lyapopt_adv} establish the stability of $\LyapOpt$ in both stationary and nonstationary settings when the traffic intensity $\rho < 1$, using a drift analysis argument similar to that of MaxWeight. Combined with its finite-time minimax optimality, these results demonstrate that $\LyapOpt$ provides a unified framework ensuring both short-term optimality and long-term stability.

\section{Numerical Experiments}
\label{sec:exp}

In this section, we present numerical results that validate the theoretical findings and illustrate the finite-time performance differences between $\LyapOpt$ and MaxWeight across a broad range of settings. The simulation scripts are publicly accessible at \url{https://github.com/LYJ-aa/lyapopt-queueing-code}.

%\subsection{Performance Gap Between LyapOpt and MaxWeight for Dimension 2}\label{subsec:experiments_gap}
\subsection{Validation of the Hard Instance} \label{subsec:experiments_gap}
We demonstrate that the $\sqrt{B\, T}$ gap established in  Corollary~\ref{coro:gap}, for the construction $\mathcal{I}_{\sqrt{2}B, 1}$ given in~\eqref{exp_qb},  is evident  for practical values of $T$ and $B$, as shown in Figures~\ref{fig:performance_fixed_b} and~\ref{fig:performance_vs_b}, where $C_{\mathrm{a}}=C_{\mathrm{d}}=0$ and $b=\sqrt{2}B$. However, as shown in Figures~\ref{fig_app:performance_b10} and~\ref{fig:performance_b20} in Appendix~\ref{sec_appendix: supple_exp}, the total queue length under MaxWeight eventually becomes bounded (i.e., $O(1)$ in $T$) as $T$ increases. 

Moreover, although we do not have a theoretical guarantee for the same construction when the arrival process and random departure field are extended to include variances (i.e., from $C_{\mathrm{a}} = C_{\mathrm{d}} = 0$ to $C_{\mathrm{a}}, C_{\mathrm{d}} > 0$), Figures~\ref{fig_app:performance_B=10_C=1} and~\ref{fig_app:performance_B=10_C=2} indicate that the performance advantage of $\LyapOpt$ over MaxWeight remains substantial.  
\begin{figure}[H]
    \centering
    \begin{subfigure}[t]{0.5\textwidth}
        \includegraphics[width=\linewidth]{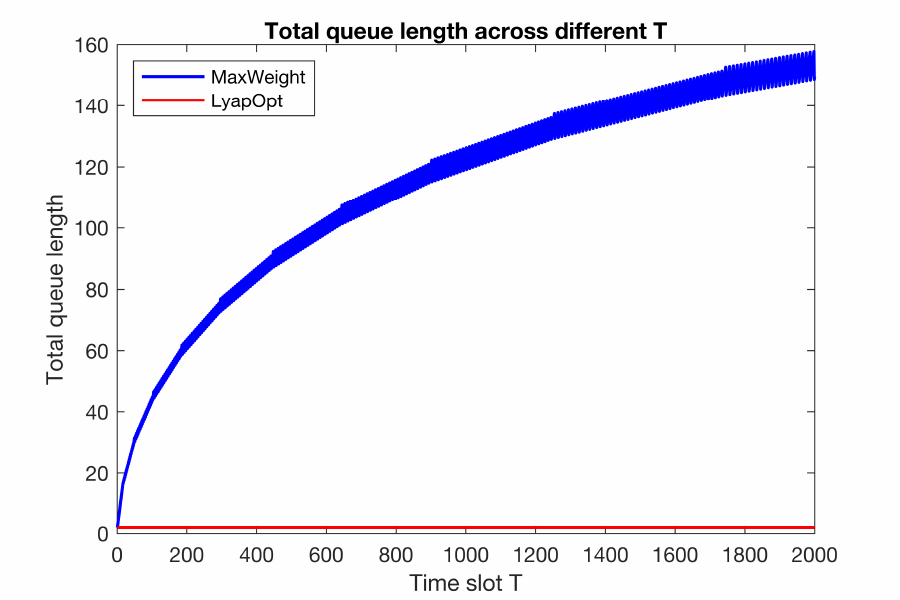}
        \caption{Total queue length when $B=10$, , $C_{\mathrm{a}}=C_{\mathrm{d}}=0$}
        \label{fig:performance_fixed_b}
    \end{subfigure}%
    \hfill
    \begin{subfigure}[t]{0.5\textwidth}
        \includegraphics[width=\linewidth]{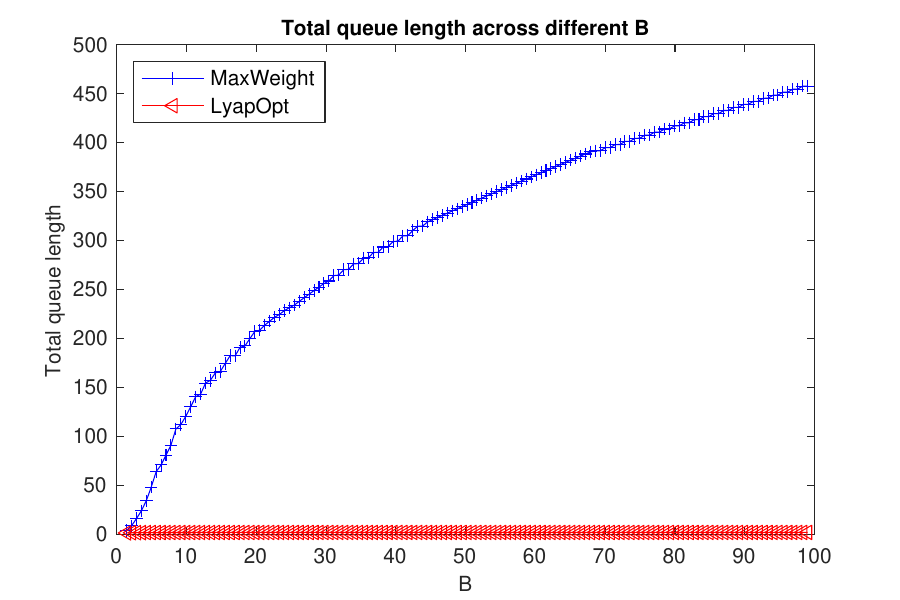}
        \caption{Total queue length when $T=1000$, $C_{\mathrm{a}}=C_{\mathrm{d}}=0$}
        \label{fig:performance_vs_b}
    \end{subfigure}
    \caption{Performance comparison of MaxWeight and LyapOpt policies versus $B$} 
    \label{fig:all_results}
\end{figure}
\begin{figure}[ht]
    \centering
    \begin{subfigure}[t]{0.5\textwidth}
        \includegraphics[width=\linewidth]{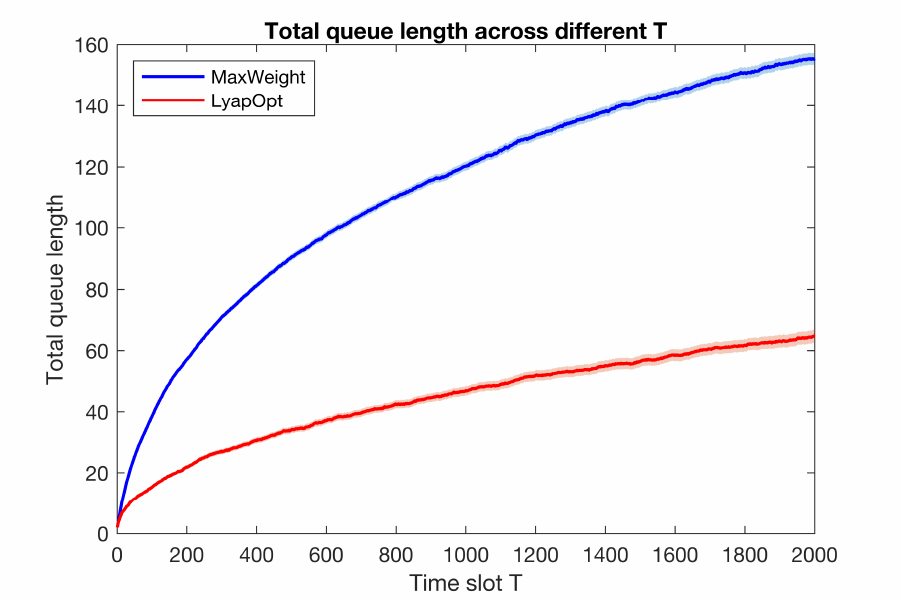}
        \caption{Total queue length when $B=10$, $C_{\mathrm{a}}=C_{\mathrm{d}}=1$}
        \label{fig_app:performance_B=10_C=1}
    \end{subfigure}%
    \hfill
    \begin{subfigure}[t]{0.5\textwidth}
        \includegraphics[width=\linewidth]{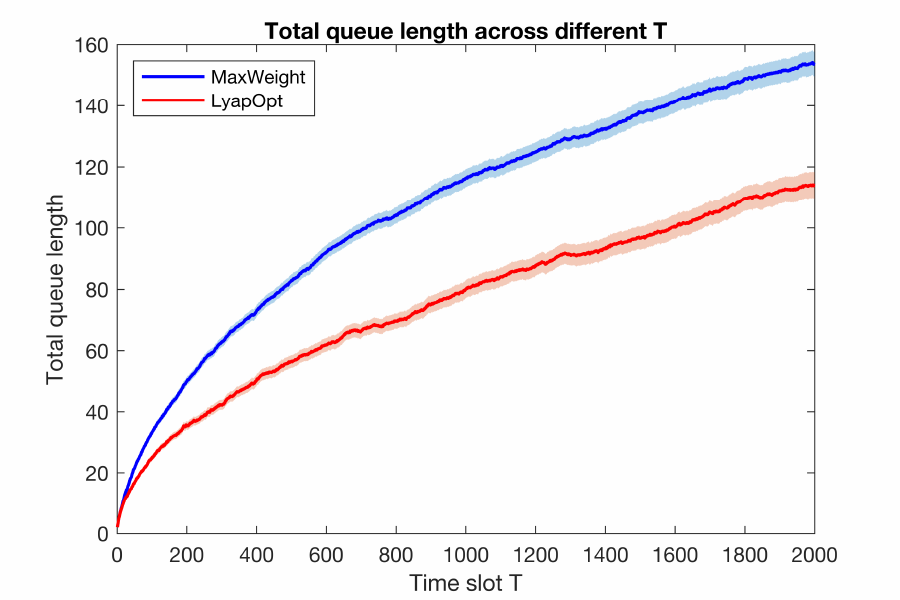}
        \caption{Total queue length when $B=10$, $C_{\mathrm{a}}=C_{\mathrm{d}}=2$}
        \label{fig_app:performance_B=10_C=2}
    \end{subfigure}
    \caption{Performance comparison of MaxWeight and LyapOpt policies when $C_{\mathrm{a}}, C_{\mathrm{d}}>0$} 
    \label{fig_appendix:C>0}
\end{figure}

\subsection{Comprehensive  Experiments}
\label{subsec:add_exp}

% This policy can be implemented in two ways:
% \begin{itemize}
%     \item \textbf{Brute-force evaluation:} For small to moderately sized $\mathcal{D}$, one can directly evaluate the objective for all $d \in \mathcal{D}$.
%     \item \textbf{Discrete optimization:} The problem can be modeled as a mixed-integer quadratic program (MIQP) and solved using off-the-shelf solvers such as Gurobi or CPLEX.

%     % \item \textbf{Quadratic optimization:} The objective is convex in $d$ (over a convex domain), so the problem can be formulated as a quadratic program and solved using off-the-shelf solvers such as Gurobi or CPLEX.
% \end{itemize}
We compare the finite-time performance of $\LyapOpt$ and MaxWeight as the number of parallel queues $n$ increases from 2 to 8. For each $n$, the time-invariant scheduling set $\mathcal{D}_\star$ is generated by uniformly sampling $10n$ vectors with integer entries between 1 and 10. Subsequently, 2000 arrival rate vectors are uniformly drawn from the boundary of the capacity region $\rho\Pi(\mathcal{D}_\star)$, each representing a distinct scenario (a total of 2000 scenarios per $n$). In each scenario, arrivals evolve i.i.d.\ over time according to Gamma distributions with fixed variance $1$ per queue, calibrated to match the corresponding sampled arrival rate vector, while departures are modeled as Gamma random variables with fixed variance $1$ per queue, calibrated to match the departure vectors selected by the policy. 
\begin{comment}
\begin{wraptable}{r}{0.5\textwidth}
\vspace{-1em}
\centering
\caption{Scenarios with ratios $\leq$ 1, 0.9, 0.5}
\label{tab:lyapopt_vs_maxweight}
\small
\begin{tabular}{cccc}
\toprule
$n$ & $\leq 1$ & $\leq 0.9$ & $\leq 0.5$ \\
\midrule
2 & 84.7\% & 25.9\% & 0\% \\
3 & 97.5\% & 54.1\% & 36.3\% \\
4 & 99.9\% & 78.5\% & 46.1\% \\
5 & 100\% & 67.0\% & 31.3\% \\
6 & 97.4\% & 71.3\% & 26.5\% \\
7 & 100\% & 90.0\% & 45.9\% \\
8 & 100\% & 80.7\% & 35.9\% \\
\bottomrule
\end{tabular}
\vspace{-1em}
\end{wraptable}
\end{comment}

Figures~\ref{fig:total3} and~\ref{fig:sq3} show a representative case with $n=8$ and an arrival rate on the boundary of the capacity region $\Pi(\mathcal{D}_\star)$, corresponding to the heavy-traffic regime. The results are averaged over 1000 independent realizations, and the accompanying error bars indicate $95\%$ confidence intervals. Both figures demonstrate that $\LyapOpt$ significantly outperforms MaxWeight. In Figure~\ref{fig:total3}, both policies appear to exhibit $\sqrt{T}$ growth, but $\LyapOpt$ maintains consistently lower total queue length. Figure~\ref{fig:sq3} further shows reduced queue imbalance under $\LyapOpt$, as reflected in the lower sum of squared queue lengths.
\begin{figure}[ht]
\centering
\begin{subfigure}{0.48\linewidth}
    \includegraphics[width=\linewidth]{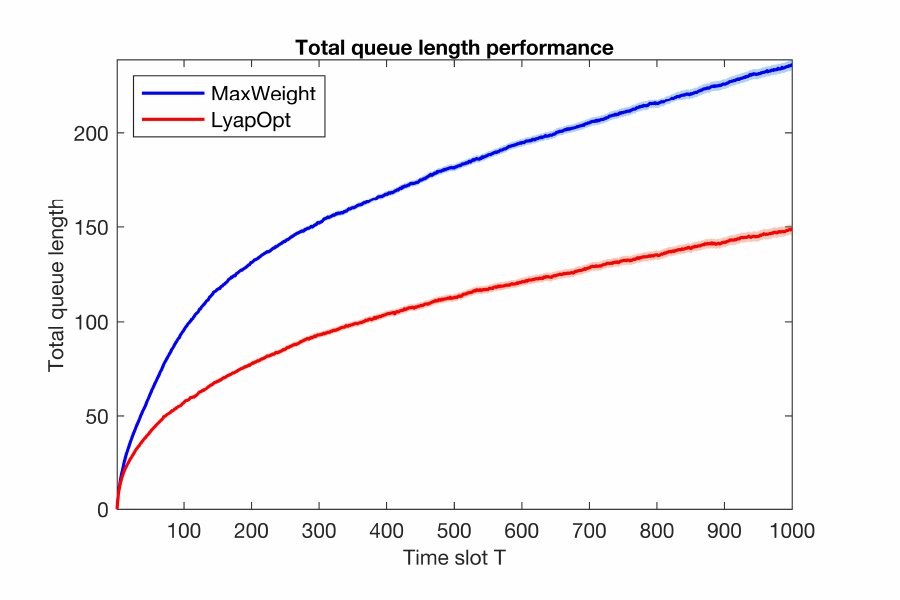}
    \caption{Total queue length}\label{fig:total3}
\end{subfigure}
\hfill
\begin{subfigure}{0.48\linewidth}
    \includegraphics[width=\linewidth]{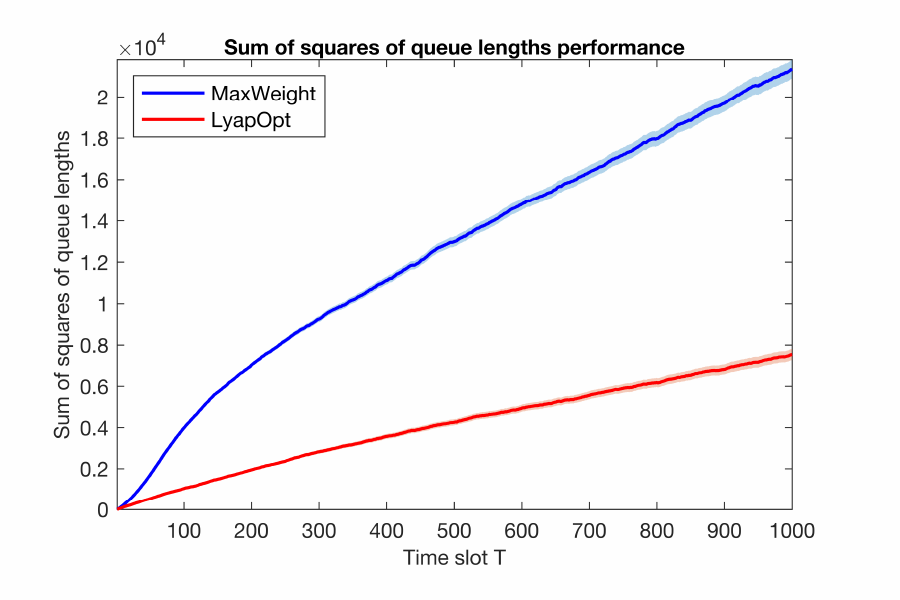}
    \caption{Sum of squares of queue lengths.}\label{fig:sq3}
\end{subfigure}
\caption{Finite-time comparison of MaxWeight and LyapOpt policies ($n = 8$, $\rho=1$).}
\end{figure}

Figures~\ref{fig:total_rho0.9} and~\ref{fig:before_arrivals} illustrate a representative case with $n=8$ and an arrival rate on the boundary of the capacity region $\rho\Pi(\mathcal{D}_\star)$ for $\rho=0.9$, corresponding to the interior regime. Specifically, the vertical axis in Figure~\ref{fig:total_rho0.9} is $\sum_{i=1}^n Q_i(T)$, while in Figure~\ref{fig:before_arrivals} it is $\sum_{i=1}^n(Q_i(T)- A_i(T))$. The results are averaged over 10,000 independent realizations, and the accompanying error bars indicate $95\%$ confidence intervals. Both figures demonstrate that $\LyapOpt$ outperforms MaxWeight and that the system reach stability quickly. In Figure~\ref{fig:before_arrivals}, after removing the common new arrivals, we can see that the queue length under the $\LyapOpt$ policy is noticeably smaller than that under MaxWeight.

\begin{figure}[ht]
\centering
\begin{subfigure}{0.48\linewidth}
    \includegraphics[width=\linewidth]{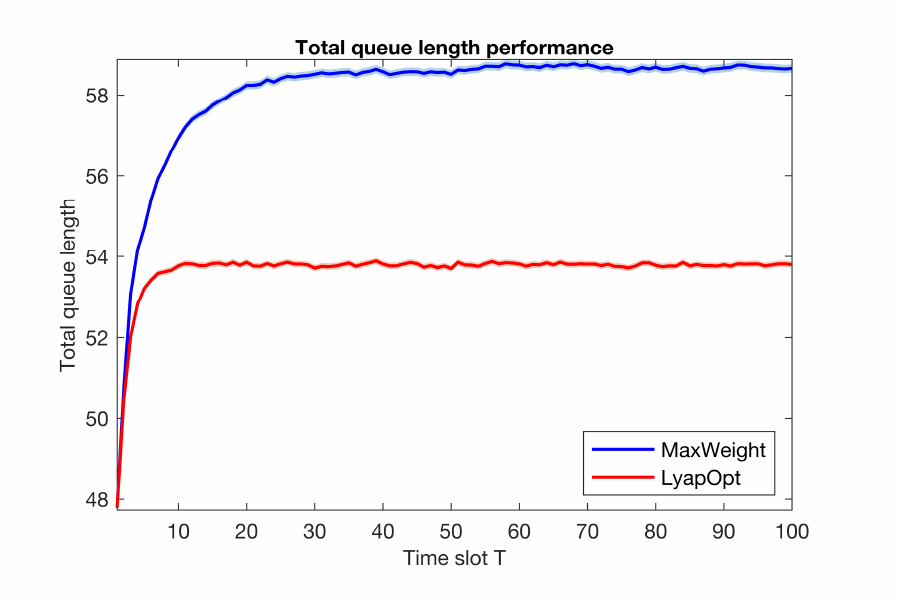}
    \caption{Total queue length}\label{fig:total_rho0.9}
\end{subfigure}
\hfill
\begin{subfigure}{0.48\linewidth}
    \includegraphics[width=\linewidth]{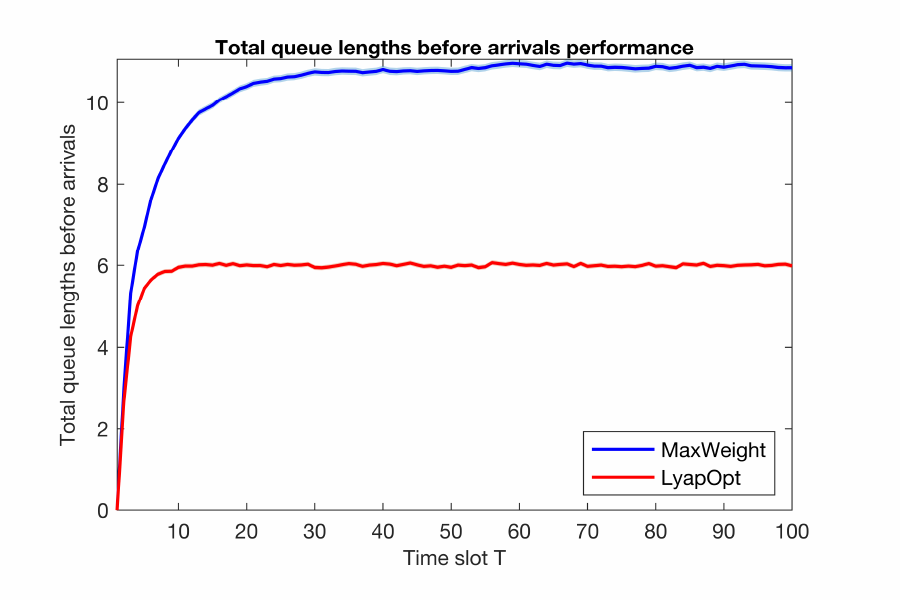}
    \caption{Total queue length before arrivals.}\label{fig:before_arrivals}
\end{subfigure}
\caption{Finite-time comparison of MaxWeight and LyapOpt policies ($n = 8$, $\rho=0.9$).}
\end{figure}

Table~\ref{tab:lyapopt_vs_maxweight} reports, for each number of queues $n$ ($2 \le n \le 8$), the proportion of scenarios where the ratio
\begin{align*}
    \text{ratio}=\frac{\text{Expected Total Queue Length ($\LyapOpt$) at $t=1000$ }}{\text{Expected Total Queue Length (MaxWeight) at $t=1000$ }}
\end{align*} 
falls below 1, 0.9, and 0.8, respectively—indicating cases where $\LyapOpt$ outperforms MaxWeight. The expected total queue length is averaged over 100 independent realizations in each scenario. 
 The accompanying error bars represent $95\%$ confidence intervals based on 2000 independently generated arrival scenarios. These results show that for a wide range of queues, $\LyapOpt$ consistently outperforms MaxWeight, sometimes substantially so. 

% The results demonstrate that as the number of queues increases, the $\LyapOpt$ policy consistently delivers significant performance advantages, with many scenarios exhibiting substantial relative improvements.

% \vt{remember to include the standard deviations in the tables below}
\begin{table}[h]
\centering
\caption{Proportion of scenarios with ratio below 1, 0.9, and 0.8 $(\rho=1)$}
\label{tab:lyapopt_vs_maxweight}
\begin{tabular}{cccc}
\toprule
Number of Queues ($n$) & ratio $\leq 1$ & ratio $\leq 0.9$ & ratio $\leq 0.8$ \\
\midrule
2 & 63.30\% \textsubscript{\tiny$\pm$ 2.11\%}   & 9.90\% \textsubscript{\tiny$\pm$ 1.31\%} & 0.10\% \textsubscript{\tiny$\pm$ 0.14\%} \\
3 & 97.30\% \textsubscript{\tiny$\pm$ 0.71\%} & 78.75\% \textsubscript{\tiny$\pm$ 1.79\%}  & 38.30\% \textsubscript{\tiny$\pm$ 2.13\%} \\
4 & 99.70\% \textsubscript{\tiny$\pm$ 0.24\%}  & 95.60\% \textsubscript{\tiny$\pm$ 0.90\%}  & 85.85\% \textsubscript{\tiny$\pm$ 1.53\%} \\
5 & 99.35\% \textsubscript{\tiny$\pm$ 0.35\%}  & 92.35\% \textsubscript{\tiny$\pm$ 1.16\%} & 61.25\% \textsubscript{\tiny$\pm$ 2.14\%} \\
6 & 97.40\% \textsubscript{\tiny$\pm$ 0.70\%} & 87.40\% \textsubscript{\tiny$\pm$ 1.45\%} & 72.00\% \textsubscript{\tiny$\pm$ 1.97\%} \\
7 & 100.00\% \textsubscript{\tiny$\pm$ 0.00\%} & 100.00\% \textsubscript{\tiny$\pm$ 0.00\%} & 100.00\% \textsubscript{\tiny$\pm$ 0.00\%} \\
8 & 100.00\%  \textsubscript{\tiny$\pm$ 0.00\%} & 99.50\% \textsubscript{\tiny$\pm$ 0.31\%}  & 92.00\% \textsubscript{\tiny$\pm$ 1.19\%}  \\
\bottomrule
\end{tabular}
\end{table}

% \begin{table}[h]
% \centering
% \caption{Proportion of scenarios with ratio below 1, 0.9, and 0.8 $(\rho=0.9)$}
% \label{tab:lyapopt_vs_maxweight_interior2}
% \begin{tabular}{cccc}
% \toprule
% Number of Queues ($n$) & ratio $\leq 1$ & ratio $\leq 0.9$ & ratio $\leq 0.8$  \\
% \midrule
% 2 & 34.80\% \textsubscript{\tiny$\pm$ 2.09\%}  & 0.00\% \textsubscript{\tiny$\pm$ 0.00\%}  & 0.00\% \textsubscript{\tiny$\pm$ 0.00\%} \\
% 3 & 95.25\% \textsubscript{\tiny$\pm$ 0.93\%} & 41.60\% \textsubscript{\tiny$\pm$ 2.16\%} & 14.70\% \textsubscript{\tiny$\pm$ 1.55\%} \\
% 4 & 97.70\% \textsubscript{\tiny$\pm$ 0.66\%} & 24.05\% \textsubscript{\tiny$\pm$ 1.87\%}& 5.25\% \textsubscript{\tiny$\pm$ 0.98\%} \\
% 5 & 97.80\% \textsubscript{\tiny$\pm$ 0.64\%} & 28.85\% \textsubscript{\tiny$\pm$ 1.99\%} & 2.75\% \textsubscript{\tiny$\pm$ 0.72\%} \\
% 6 & 92.45\% \textsubscript{\tiny$\pm$ 1.16\%} & 3.25\% \textsubscript{\tiny$\pm$ 0.78\%} & 0.00\% \textsubscript{\tiny$\pm$ 0.00\%}\\
% 7 & 99.90\%  \textsubscript{\tiny$\pm$ 0.10\%} & 58.35\% \textsubscript{\tiny$\pm$ 2.16\%} & 38.45\% \textsubscript{\tiny$\pm$ 2.13\%} \\
% 8 & 98.20\%  \textsubscript{\tiny$\pm$ 0.58\%} & 49.15\% \textsubscript{\tiny$\pm$ 2.19\%} & 0.55\% \textsubscript{\tiny$\pm$ 0.32\%} \\
% \bottomrule
% \end{tabular}
% \end{table}

\section{Extensions}
\label{sec:extension}

\subsection{Minimax Lower Bound--Independent Case}
\label{subsec:lower-indep}
Some classical queueing models assume independence across queues in both arrivals and departures to facilitate asymptotic analyses~\citep{tassiulas1992stability,harrison1997dynamic,stolyar2004maxweight}. To complement our general analysis, we also present the bound under this assumption. 

We consider the subclass capturing independence across queues: 
\begin{align*}
      \mathcal{M}_{\mathcal{I}}^{\rho}(B,C_{\mathrm{a}},C_{\mathrm{d}}) = \left\{ (\mathcal{D},A,S) \in \mathcal{M}^{\rho}(B,C_{\mathrm{a}},C_{\mathrm{d}})\,: \, \parbox[c]{2.85in}{$\forall\, t\in \mathbb{N},\, \{A_i(t)\}_{i=1}^n \, \text{ indpt across}\; i$ \vspace{0.04 in} \\ $\forall\, t\in \mathbb{N},\, \{S_d(t)_i\}_{i=1}^n \text{ indpt across}\; i, \forall\, d\in \mathcal{D}_t$}
       \right\}.
 \end{align*}
% \begin{align*}
% \mathcal{M}_{\mathcal{I}}^ {\rho}(B,C_{\mathrm{a}}, C_{\mathrm{d}}) = \bigg\{(\mathcal{D},A,S) \in \mathcal{M}^ {\rho}(B,C_{\mathrm{a}}, C_{\mathrm{d}}): \ \forall t\in \mathbb{N}, \ &\{A_i(t)\}_{i=1}^{n} \ \text{are independent}\ \text{across $i$}; \\  \{(S_d(t))_i\}_{i=1}^{n} &  \ \text{are independent}\ \text{across $i$}, \ \forall d\in\mathcal{D}_t \bigg\}.
% \end{align*}
Let 
\begin{align*}
  \Gamma^{\rho}_{\mathcal{I}}(B,C_{\mathrm{a}}, C_{\mathrm{d}},T):=  \inf_{\Phi} \sup_{(\mathcal{D},A,S) \in \mathcal{M}^ {\rho}_{\mathcal{I}}(B,C_{\mathrm{a}}, C_{\mathrm{d}})} \mathbb{E}^\Phi_{(A, S)}\left[\sum_{i=1}^{n} Q_i(T)\right]. 
\end{align*}
The following proposition characterizes the minimax expected total queue length $\Gamma^{\rho}_{\mathcal{I}}(B,C_{\mathrm{a}}, C_{\mathrm{d}},T)$. The proof is presented in Appendix~\ref{app: thm_lower_bound_indep}.
\begin{proposition}[Minimax Lower Bound--Independent Case]
\label{thm:lower_bound_indep}
For any scheduling policy, and for scheduling sets, arrival processes and random departure fields within the model class $\mathcal{M}_{\mathcal{I}}^{\rho}(B,C_{\mathrm{a}}, C_{\mathrm{d}})$, there exist an absolute constant $c_1>0$ and a constant $\psi_2(n,B,C_{\mathrm{a}},C_{\mathrm{d}})>0$ such that, for
\begin{align*}
\rho \in
\begin{cases}
\left[\big(1-\tfrac{\sqrt{C_{\mathrm{a}}^2+C_{\mathrm{d}}^2}}{B\psi_2(n,B,C_{\mathrm{a}},C_{\mathrm{d}})}\big)\vee 1/2, 1\right],
& \text{if } C_{\mathrm{a}}^2 + C_{\mathrm{d}}^2 > 0,\\
(0, 1], & \text{if } C_{\mathrm{a}}^2 + C_{\mathrm{d}}^2 = 0,
\end{cases}
\end{align*}
the following lower bound holds for all $T \geq \psi_2(n,B,C_{\mathrm{a}},C_{\mathrm{d}})^2+1$, 
\begin{align}\label{ieq:lower_bound_independent_rho}
  \Gamma^ {\rho}_{\mathcal{I}}(B,C_{\mathrm{a}}, C_{\mathrm{d}},T) \geq 
  c_1\min\left \{\sqrt{n(C_{\mathrm{a}}^2+C_{\mathrm{d}}^2)(T-1)}, \frac{C_{\mathrm{a}}^2+C_{\mathrm{d}}^2}{B(1-\rho)}\right\} +n\rho B.
\end{align}
\end{proposition}

Compared with Theorem~\ref{thm_lower_bound}, the two lower bounds exhibit the same dependence on $T$, $\rho$, $B$, $C_{\mathrm{a}}$, and $C_{\mathrm{d}}$, the only difference lies in their scaling with respect to the number of queues~$n$.

% \begin{proof}[Proof Sketch.]
%   We follow the first step of Theorem~\ref{thm_lower_bound}, but construct $A$ and $S$ to be mutually independent, with $\{A_i(t)\}_{i=1}^n$ independent across $i$ and, for each $d \in \mathcal{D}_t$, the components $\{(S_d(t))_i\}_{i=1}^n$ also independent across $i$. 
% We then apply a Berry–Esseen type inequality (see Appendix~\ref{appendix_thm_lower_bound_indep} for details), after which the asymptotic result follows directly.

% \end{proof}
This lower bound relates to the asymptotic results of \citet{stolyar2004maxweight} through its dependence on $n$, $C_{\mathrm{a}}$, and $C_{\mathrm{d}}$, although the two are not directly comparable. 
In \citet{stolyar2004maxweight}, the analysis is performed for individual problem instances. 
By contrast, our results address the worst-case performance over an entire model class, yielding a minimax characterization. 
Moreover, the two works differ in the underlying performance metrics.

\subsection{Other Model Classes}
\label{sec: other model classes}
The previous sections considered the model class $\mathcal{M}^{\rho}(B, C_{\mathrm{a}}, C_{\mathrm{d}})$, in which the scheduling set is allowed to vary over time to capture a more general and challenging problem. In this section, we focus on the time-invariant scheduling set, as is common in the classical queueing literature~\citep{tassiulas1992stability, mandelbaum2004scheduling}. This setting simplifies the model and further highlights that the independence of the lower bound from the scheduling set capacity parameter $B$ in the finite-time regime, together with the dependence of the MaxWeight bound on $B$, are both critical and fundamental characteristics of the system.

Fix the scheduling set $\mathcal{D}_\star$ for all $t$, and consider a reduced model class that does not involve scheduling sets explicitly:
\begin{align*}
     \mathcal{M}_{\mathcal{D}_\star}^{\rho}(C_{\mathrm{a}},C_{\mathrm{d}}) = \left\{ (A,S)\,: \, \parbox[c]{3.3in}{$\lambda(t)\in \rho\Pi(\mathcal{D}_\star)$, $\frac{1}{n}\sum_{i=1}^{n} \mathrm{Var}(A_i(t)) \leq C_{\mathrm{a}}^2, \,\,\,\forall\, t\in \mathbb{N};$ \vspace{0.04 in} \\  $\frac{1}{n}\sum_{i=1}^{n} \mathrm{Var}\!\left((S_d(t))_i\right) \leq C_{\mathrm{d}}^2, 
    \,\,\, \forall\, d\in\mathcal{D}_\star,  t\in \mathbb{N}_0$ }
      \right\}
\end{align*}
where the arrival process $A$, and the random departure field
$S$ are those defined in Section~\ref{sec: problem_setup}. 

Our objective is to characterize the fundamental minimax expected total queue length at time $T$: 
\begin{align*}
   \Gamma^ {\rho}_{\mathcal{D}_\star}(C_{\mathrm{a}}, C_{\mathrm{d}},T): =\inf_{\Phi} \sup_{(A,S) \in \mathcal{M}^\rho_{\mathcal{D}_\star}(C_{\mathrm{a}}, C_{\mathrm{d}})} \mathbb{E}^\Phi_{ (A, S)}\left[\sum_{i=1}^{n} Q_i(T)\right]. 
\end{align*}

The following proposition presents the minimax lower bound for the model class $\mathcal{M}^\rho_{\mathcal{D}_\star}(C_{\mathrm{a}}, C_{\mathrm{d}})$. Fixing $\mathcal{D}_\star$ allows the lower bound to be characterized in terms of the geometric properties of $\mathcal{D}_\star$. In particular, we introduce three parameters, $M_1$, $M_2$, and $M_3$, associated with $\mathcal{D}_\star$, which represent the maximum total capacity, the maximum $\ell_2$ capacity norm, and the minimum $\ell_2$ norm of the vector aligned with the capacity, respectively.
\begin{align*}
&M_1:= \max_{d \in \mathcal{D}_\star} \sum_{i=1}^n d_i;\quad 
M_2 := \max_{d \in \mathcal{D}_\star} \sqrt{\sum_{i=1}^n d_i^2};\\
&M_3:= \min \left\{ \sqrt{\sum_{i=1}^n \gamma_i^2} \;:\; 
   \gamma \in \Pi(\mathcal{D}_\star),\ \sum_{i=1}^n \gamma_i = M_1 \right\}.
\end{align*}

\begin{proposition}[Minimax Lower Bound--Fixed Scheduling Set]\label{ext:thm_lower_bound}
Fix a scheduling set $\mathcal{D}_\star$. For any scheduling policy, and for arrival processes and random departure fields in the model class $\mathcal{M}_{\mathcal{D}_\star}^\rho(C_{\mathrm{a}}, C_{\mathrm{d}})$, there exist an absolute constant $c_1>0$ and a finite constant $\psi_3(n,C_{\mathrm{a}},C_{\mathrm{d}},\mathcal{D}_\star)>0$ such that, for
\begin{align*}
\rho \in
\begin{cases}
\left[\big(1-\tfrac{\sqrt{C_{\mathrm{a}}^2+C_{\mathrm{d}}^2}}{B\psi_3(n,C_{\mathrm{a}},C_{\mathrm{d}},\mathcal{D}_\star)} \big)\vee 1/2, 1 \right],
& \text{if } C_{\mathrm{a}}^2 + C_{\mathrm{d}}^2 > 0,\\
(0, 1], & \text{if } C_{\mathrm{a}}^2 + C_{\mathrm{d}}^2 = 0,
\end{cases}
\end{align*}
the following lower bound holds for all $T \geq \psi_3(n,C_{\mathrm{a}},C_{\mathrm{d}},\mathcal{D}_\star)^2\mathbbm{1}_{\{C_{\mathrm{a}}^2+C_{\mathrm{d}}^2>0\}}+1$, 
\begin{align*}
\Gamma^ {\rho}_{\mathcal{D}_\star}(C_{\mathrm{a}}, C_{\mathrm{d}},T)\geq c_1\min\left \{\sqrt{\left(\frac{M_1^2}{M_3^2}C_{\mathrm{a}}^2+\frac{M_1^2}{M_2^2}C_{\mathrm{d}}^2\right)n(T-1)},\ \frac{n}{1-\rho}\left(\frac{M_1}{M_3^2}C_{\mathrm{a}}^2+\frac{M_1}{M_2^2}C_{\mathrm{d}}^2\right)\right\} +\rho M_1.
\end{align*}
\end{proposition}
\begin{proof}
    By repeating the proof of Theorem~\ref{thm_lower_bound}, the desired result follows immediately.
\end{proof}

This proposition reveals that the lower bound for the model class $\mathcal{M}^\rho_{\mathcal{D}_\star}(C_{\mathrm{a}}, C_{\mathrm{d}})$ depends on the geometric discrepancy between different norms associated with the scheduling set $\mathcal{D}_\star$.
To simplify this, we introduce the following variant of the model class:
\begin{align*}
     \widetilde{\mathcal{M}}_{\mathcal{D}_\star}^\rho(C_{\mathrm{a}},C_{\mathrm{d}}) = \left\{ (A,S)\,: \, \parbox[c]{3.5in}{$\lambda(t)\in \rho\Pi(\mathcal{D}_\star)$, $\frac{1}{n}\sum_{i=1}^{n} \sqrt{\mathrm{Var}(A_i(t))} \leq C_{\mathrm{a}}, \,\,\,\forall\, t\in \mathbb{N};$ \vspace{0.04 in} \\  $\frac{1}{n}\sum_{i=1}^{n} \sqrt{\mathrm{Var}\!\left((S_d(t))_i\right)} \leq C_{\mathrm{d}}, 
    \,\,\, \forall\, d\in\mathcal{D}_\star,  t\in \mathbb{N}_0$ }
      \right\}
\end{align*}
where the arrival process $A$, and the random departure field
$S$ are those defined in Section~\ref{sec: problem_setup}.
This variant differs from $\mathcal{M}_{\mathcal{D}_\star}^\rho(C_{\mathrm{a}},C_{\mathrm{d}})$ in that the constraints are placed on average {\em standard deviations} rather than {\em variances}.

Let $\widetilde{\Gamma}^ {\rho}_{\mathcal{D}_\star}(C_{\mathrm{a}}, C_{\mathrm{d}},T)$ denote the minimax expected total queue length at time $T$ under the model class $\widetilde{\mathcal{M}}_{\mathcal{D}_\star}^\rho(C_{\mathrm{a}}, C_{\mathrm{d}})$. The following proposition establishes a lower bound of $\widetilde{\Gamma}^ {\rho}_{\mathcal{D}_\star}(C_{\mathrm{a}}, C_{\mathrm{d}},T)$. Within the model class $\widetilde{\mathcal{M}}_{\mathcal{D}_\star}^\rho(C_{\mathrm{a}},C_{\mathrm{d}})$, the analysis simplifies and yields a cleaner expression, while remaining consistent with the lower bound in Theorem~\ref{thm_lower_bound}.

\begin{proposition}[Minimax Lower Bound--Variant Model Class]
Fix a scheduling set $\mathcal{D}_\star$. For any scheduling policy, and for arrival processes and random departure fields in the model class $\widetilde{\mathcal{M}}_{\mathcal{D}_\star}^\rho(C_{\mathrm{a}}, C_{\mathrm{d}})$,  there exist an absolute constant $c_1>0$ and a finite constant $\psi_4(n,C_{\mathrm{a}},C_{\mathrm{d}},\mathcal{D}_\star)>0$ such that, for
\begin{align*}
\rho \in
\begin{cases}
\left[\Big(1-\tfrac{\sqrt{C_{\mathrm{a}}^2+C_{\mathrm{d}}^2}}{B\psi_4(n,C_{\mathrm{a}},C_{\mathrm{d}},\mathcal{D}_\star)}\Big)\vee 1/2, 1\right],
& \text{if } C_{\mathrm{a}}^2 + C_{\mathrm{d}}^2 > 0,\\
(0, 1], & \text{if } C_{\mathrm{a}}^2 + C_{\mathrm{d}}^2 = 0,
\end{cases}
\end{align*}
the following lower bound holds for all $T \geq \psi_4(n,C_{\mathrm{a}},C_{\mathrm{d}},\mathcal{D}_\star)^2\mathbbm{1}_{\{C_{\mathrm{a}}^2+C_{\mathrm{d}}^2>0\}}+1$, 
\begin{align}\label{ieq:fix_d}
\widetilde{\Gamma}^ {\rho}_{\mathcal{D}_\star}(C_{\mathrm{a}}, C_{\mathrm{d}},T)
\geq c_1\min\left \{n\sqrt{\left(C_{\mathrm{a}}^2+C_{\mathrm{d}}^2\right)(T-1)},\ \frac{n^2(C_{\mathrm{a}}^2+C_{\mathrm{d}}^2)}{M_1(1-\rho)}\right\} +\rho M_1.
\end{align}
\end{proposition}
\begin{proof}
The argument follows the proof of Theorem~\ref{thm_lower_bound}, 
with the tuning parameters in the arrival and departure distributions chosen as
\[
   K_{\mathrm{a}} := \left(\frac{nC_{\mathrm{a}}}{\rho M_1}\right)^2 + 1, 
   \qquad 
   K_{\mathrm{d}} := \left(\frac{nC_{\mathrm{d}}}{M_1}\right)^2 + 1.
\]
\end{proof}

In Corollary~\ref{coro:gap}, we consider an instance with a time-invariant scheduling set and $C_{\mathrm{a}} = C_{\mathrm{d}} = 0$. In this case, the lower bound for MaxWeight still depends on the capacity parameter $B$ within a finite time horizon. This dependence shows that the MaxWeight policy cannot achieve the lower bound given in~\eqref{ieq:fix_d}. To close this gap and attain the lower bound, we introduce the following Empirical Projection (EP) policy:
\begin{align*}
    d^{\mathrm{EP}}(t) \in \argmin_{d\in {\cal D}_\star} \sum_{i=1}^n \max \{ \widehat{A}_i(t)- d_i,0\},
\end{align*}
where $\widehat{A}(t) = \frac{1}{t}\sum_{s=1}^{t} A(s)$.
The EP policy leverages empirical averages for scheduling. 
By projecting the empirical arrival rates onto the feasible scheduling set, it balances service with observed arrivals.
The following theorem establishes its finite-time performance guarantee.

\begin{proposition}[Finite-Time Performance of the $\mathrm{EP}$ Policy]
\label{prop:new_policy}
Assume that $A(t)$ is i.i.d. across $t$ with mean $\mathbb{E}[A(t)] = \lambda \in \mathcal{D}_\star$. 
Within the model class $\widetilde{\mathcal{M}}_{\mathcal{D}_\star}^\rho(C_{\mathrm{a}}, C_{\mathrm{d}})$ for $\rho \in (0,1]$,  the $\mathrm{EP}$ policy guarantees the following bound on the expected total queue length: for all $T\geq 1$,
 \begin{align}
   \mathbb{E}^{\mathrm{EP}}_{ (A, S)}\bigg[\sum_{i=1}^n Q_i(T) \bigg]\leq n\sqrt{T-1}(6C_{\mathrm{a}}+2C_{\mathrm{d}})+\sum_{i=1}^n \mathbb{E}_{(A, S)}[A_i(T)]. 
 \end{align}
\end{proposition}
The proof is provided in Appendix~\ref{app: other model classes}. 
This result shows that the EP policy achieves the same $O\big(n(C_{\mathrm{a}}+C_{\mathrm{d}})\sqrt{T}\big)$ 
 scaling as the first term in
the min in the lower bound~\eqref{ieq:fix_d}, which is therefore tight within the model class $\widetilde{\mathcal{M}}_{\mathcal{D}_\star}^\rho(C_{\mathrm{a}},C_{\mathrm{d}})$.  

However, minimax optimality of the EP policy (up to universal  factors) in the finite-time regime requires the strong assumption that the arrival process $A(t)$ is i.i.d. across $t$ with mean $\mathbb{E}[A(t)] = \lambda \in \mathcal{D}_\star$.  When this condition fails, the policy can perform poorly (see Figure~\ref{fig:lambda_in_D}); and even under the assumption, it performs no better than $\LyapOpt$ or $\MaxWeight$ (see Figure~\ref{fig:lambda_notin_D}), under the same experimental setting as in Section~\ref{subsec:add_exp}.
\begin{figure}[H]
    \centering
    \begin{subfigure}[t]{0.5\textwidth}
        \includegraphics[width=\linewidth]{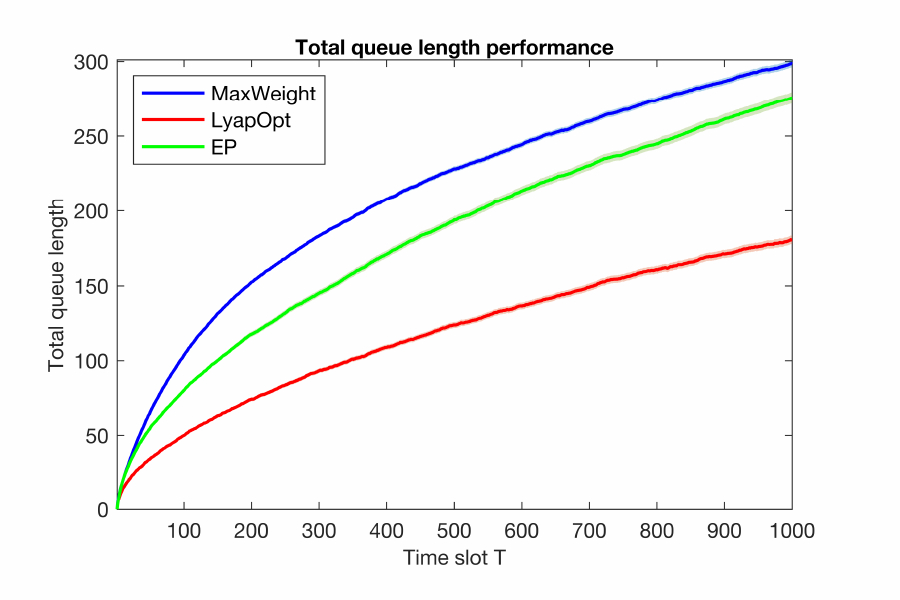}
        \caption{Total queue length when $\lambda \in \mathcal{D}_\star$}
        \label{fig:lambda_in_D}
    \end{subfigure}%
    \hfill
    \begin{subfigure}[t]{0.5\textwidth}
        \includegraphics[width=\linewidth]{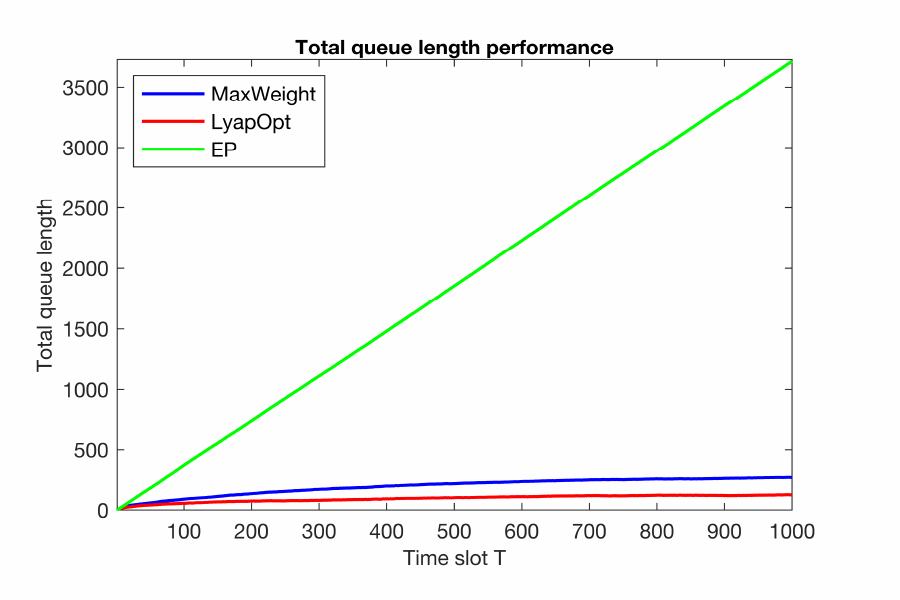}
        \caption{Total queue length when $\lambda \notin \mathcal{D}_\star$}
        \label{fig:lambda_notin_D}
    \end{subfigure}
    \caption{Performance comparison of EP, MaxWeight and LyapOpt policies} 
    \label{fig:all_results2}
\end{figure}

\section{Conclusion and Future Directions}
\label{sec:conclusion}
We established a minimax framework to study the scheduling problem in parallel queueing systems in the finite-time regime.
Through this framework, we identified a fundamental finite-time gap between the MaxWeight policy and the minimax lower bound, which depends explicitly on the capacity of the scheduling set.
To address this gap, we introduced $\LyapOpt$, a second-order Lyapunov policy that attains the lower bound under certain structural conditions while retaining  stability guarantees.

Beyond the finite-time regime, we conjecture that $\LyapOpt$ exhibits state space collapse (see, e.g., \cite{bramson1998state}) in the fluid-scaled regime, meaning that the high-dimensional queue length process evolves along a lower-dimensional manifold. This behavior is typically established through Lyapunov drift and fluid-limit analysis, and $\LyapOpt$ directly minimizes the full one-step Lyapunov drift. This, in turn, suggests that $\LyapOpt$ may also achieve diffusion-scaled workload optimality in the heavy-traffic regime, since state space collapse is a key step toward establishing such optimality. This constitutes one direction for future research. Another important direction is to extend the finite-time optimality of $\LyapOpt$ to multi-hop networks via a backpressure-style generalization, which would enhance its applicability to practical systems and introduce additional challenges arising from routing and interference among queues.

Another practical direction is to develop efficient methods for implementing $\LyapOpt$ in large-scale systems, especially when the scheduling constraints admit combinatorial structure such as bipartite matching graphs \cite{maguluri2016heavytraffic,maguluri2018incompletely}. Such structures, long central to Internet routing, are now equally important in data centers and cloud platforms. This motivates exploiting the matching geometry to design fast quadratic optimizers or structured approximations for $\LyapOpt$, enabling its real-time advantages in practice.
\label{headings}

\iffalse
\section*{Acknowledgments}
Use unnumbered first level headings for the acknowledgments. All acknowledgments
go at the end of the paper before the list of references. Moreover, you are required to declare
funding (financial activities supporting the submitted work) and competing interests (related financial activities outside the submitted work).
More information about this disclosure can be found at: \url{https://neurips.cc/Conferences/2025/PaperInformation/FundingDisclosure}.

Do {\bf not} include this section in the anonymized submission, only in the final paper. You can use the \texttt{ack} environment provided in the style file to automatically hide this section in the anonymized submission.
\fi

\bibliography{reference}
\bibliographystyle{abbrvnat}

%%%%%%%%%%%%%%%%%%%%%%%%%%%%%%%%%%%%%%%%%%%%%%%%%%%%%%%%%%%%

\appendix

\section{Technical Appendices and Supplementary Material}

\subsection{Quadratic Objective for Queue Lengths}
\label{app_sec:square_queue_length}
We now show that all the results established for the total queue length objective can be extended to the quadratic objective
\begin{align*}
   \inf_{\Phi}\ \sup_{(\mathcal{D},A,S) \in \mathcal{M}^ {\rho}(B,C_{\mathrm{a}}, C_{\mathrm{d}})}
   \mathbb{E}^\Phi_{(A,S)}\left[\sum_{i=1}^{n} Q_i(T)^2\right].
\end{align*}
For the lower bound, applying the Cauchy–Schwarz and Jensen’s inequalities yields
\begin{align*}
\mathbb{E}^\Phi_{(A,S)}\left[\sum_{i=1}^{n} Q_i(T)^2\right]
&\geq \frac{1}{n}\,\mathbb{E}^\Phi_{(A,S)}\left[\sum_{i=1}^{n} Q_i(T)\right]^2 \geq \frac{1}{n}\left(\mathbb{E}^\Phi_{(A,S)}\left[\sum_{i=1}^{n} Q_i(T)\right]\right)^2.
\end{align*}
Thus, the corresponding lower bound follows directly from the results for the total queue-length objective. 
For the upper bound, both the $\LyapOpt$ and $\MaxWeight$ policies are analyzed using a uniform Lyapunov method with the quadratic Lyapunov function $V(x)=\|x\|_2^2$. 
Hence, the corresponding results are directly obtained for $\sum_{i=1}^n Q_i(T)^2$. 

Consequently, even under the quadratic objective, the $\LyapOpt$ policy achieves the finite-time lower bound up to a constant factor, provided that the conditions of Theorem~\ref{thm:lyapopt} are satisfied, thereby confirming that the optimality of $\LyapOpt$ extends beyond the total queue-length objective.

\subsection{Lower Bound Inequalities}
This section presents several auxiliary results supporting the analysis in Section~\ref{sec:lower}.
Among them, Lemma~\ref{lemma:bound_negative_drift} is used directly in the main proof, while the first two lemmas are utilized in its derivation.

We begin with a lower bound for the expected maximum of a drifted Brownian motion.
\begin{lemma}[Lower Bound for the Maximum of a Drifted Brownian Motion]\label{lem:sharp-lb}
Let $Y_t=\sigma W_t-\theta t$ with $\theta>0$, $\sigma>0$, and $W$ a standard Brownian motion.
For $t>0$, define $M_t:=\sup_{0\le s\le t} Y_s$. Then
\begin{equation}\label{eq:sharp-min}
\mathbb{E}[M_t]
 \geq 
\underline{c}\min\left\{\sigma\sqrt{t}, \sigma^2/ \theta \right\},
\qquad
\underline{c}=f_{\mathcal{N}}(1)+2F_{\mathcal{N}}(1)-\tfrac{3}{2}\approx0.425,
\end{equation}
where $f_{\mathcal{N}}$ and $F_{\mathcal{N}}$ denote the standard normal density and cumulative distribution functions, respectively.
\end{lemma}

\begin{proof}
The distribution of the maximum of a Brownian motion with drift (see, e.g., Section~1.9 of~\cite{harrison2013brownian}) is given by 
\[
\mathbb{P}(M_t\leq x)
=F_{\mathcal{N}}\Big(\frac{x+\theta t}{\sigma\sqrt{t}}\Big)
-e^{-\frac{2\theta x}{\sigma^2}}
F_{\mathcal{N}}\Big(\frac{-x+\theta t}{\sigma\sqrt{t}}\Big), \quad x\geq 0.
\]
Integrating the tail probability yields, for $r=\theta\sqrt{t}/\sigma$,
\begin{align*}
\mathbb{E}[M_t]
=\sigma\sqrt{t}A(r)+\frac{\sigma^2}{\theta}B(r),
\ \text{where} \
A(r):=f_{\mathcal{N}}(r)-r(1-F_{\mathcal{N}}(r)),\
B(r):=F_{\mathcal{N}}(r)-1/2.
\end{align*}
Define the functions
\[
g(r):=\frac{\mathbb{E}[M_t]}{\sigma\sqrt{t}}
= A(r)+\frac{B(r)}{r},\qquad 
h(r):=\frac{\mathbb{E}[M_t]}{\sigma^2/\theta}
= r\,A(r)+B(r),\qquad r>0.
\]
We will show that $g(r)$ is decreasing on the interval $(0,1]$ and $h(r)$ is increasing on the interval $[1,\infty)$.

Since $f_{\mathcal{N}}$ decreases on $[0,\infty)$, 
\begin{equation}\label{eq:int-phi}
B(r)=\int_{0}^{r}f_{\mathcal{N}}(u)\mathrm{d}u\geq rf_{\mathcal{N}}(r), \qquad r>0.
\end{equation}
Differentiating $g(r)$ gives
\[
g'(r)=A'(r)+\frac{B'(r)r-B(r)}{r^{2}}
=-(1-F_{\mathcal{N}}(r))+\frac{f_{\mathcal{N}}(r)r-B(r)}{r^{2}},
\]
where the last equality follows from the identity $f_{\mathcal{N}}'(r)=-rf_{\mathcal{N}}(r)$. By \eqref{eq:int-phi}, $f_{\mathcal{N}}(r)r-B(r)\le 0$.
Thus $g(r)$ is decreasing on $(0,1]$, and $\inf_{0<r\le 1} g(r)=g(1)$.

Let the Mills ratio be $m(r):=\frac{1-F_{\mathcal{N}}(r)}{f_{\mathcal{N}}(r)}$ for $r>0$. Then
\[
h'(r)=A(r)+rA'(r)+B'(r)
=2f_{\mathcal{N}}(r)(1-rm(r)).
\]
By the Mills inequality $rm(r)\le 1$, we have $h'(r)\geq 0$ for all $r>0$.
Hence $h(r)$ is increasing on $[1,\infty)$, and $\inf_{r\ge 1} h(r)=h(1)$.
A direct computation gives
\[
g(1)=h(1)=A(1)+B(1)
=f_{\mathcal{N}}(1)+2F_{\mathcal{N}}(1)-\tfrac{3}{2}.
\]
Set
\[
\underline{c}:=g(1)=h(1)=f_{\mathcal{N}}(1)+2F_{\mathcal{N}}(1)-\tfrac{3}{2}\approx 0.425.
\]
For $r\leq 1$, i.e., $t\leq (\sigma/\theta)^2$,
\[\mathbb{E}[M_t]\geq \sigma\sqrt{t}g(r)\geq \underline{c}\sigma\sqrt{t}.\]
For $r\geq 1$, i.e., $t\geq (\sigma/\theta)^2$,
\[\mathbb{E}[M_t]\geq (\sigma^2/\theta)h(r)\geq \underline{c}\sigma^2/\theta.\]
Combining the two regimes yields \eqref{eq:sharp-min}.
\end{proof}

The following lemma follows from the Koml\'os–Major–Tusn\'ady (KMT) coupling theorem, which couples partial sums of i.i.d. random variables with a Brownian motion. For a random variable $Z$, define
\[
\bar{\delta}:=\sup\left\{\delta\geq 0:
\delta\mathbb{E}\left[|Z|^3 e^{\delta |Z|}\right]\leq \text{Var}(Z) \right\}.
\]
If $\bar{\delta}>0$, we say that $Z$ admits a Sakhanenko parameter $\bar{\delta}$.  

\begin{lemma}[KMT Coupling]\label{lem:KMT}
Let $\{Z(k)\}_{k\in \mathbb{N}}$ be a sequence of i.i.d.\ random variables with a non-normal distribution satisfying
\[
\mathbb{E}[Z(1)] = 0, \quad \mathbb{E}[Z(1)^2] = \sigma^2>0,
\]
and suppose that $Z(1)$ admits a Sakhanenko parameter $\bar{\delta}>0$.
 Then there exist i.i.d.\ random variables 
 $\{\widetilde{Z}(k)\}_{k\in \mathbb{N}}$ each with the same distribution as $Z(1)$, and a standard Brownian motion $W=\{W_t\}_{t\geq 0}$ such that the corresponding partial sum $\widetilde{S}(k)=\sum_{i=1}^{k}\widetilde{Z}(i)$ satisfy,
\begin{align}\label{ieq:KMT}
   \mathbb{E}\left[\max_{0\le k\le t}\big|\widetilde{S}(k)- \sigma W_k\big|\right]\le \bar{c}\log (1+t), 
\end{align}
where $\bar{c}$ is a positive constant depending only on $\sigma$ and $\bar{\delta}$. 

Moreover, if for each $\alpha\in \mathcal{A}$, the sequence $Z^{\alpha}=\{Z^{\alpha}(k)\}_{k\in \mathbb{N}}$ satisfies the above conditions with uniformly bounded variance and a uniformly positive lower bound on the Sakhanenko parameter across $\alpha$, then the same constant $\bar{c}$ can be chosen uniformly over $\alpha$.
\end{lemma}
\begin{proof}[Proof of Lemma~\ref{lem:KMT}]
By Theorem~2.4 in~\cite{waudby2025nonsasymptotic}, there exist random variables $\{\widetilde{Z}(k)\}_{k\in \mathbb{N}}$ and i.i.d. standard normal random variables $\{V(k)\}_{k\in \mathbb{N}}$ such that, 
 for any $z>0$ and  $0<\delta<\bar{\delta}$,
\begin{equation}\label{eq:KMTtail}
\mathbb{P}\left(\sup_{k\ge 4}\frac{|\widetilde{S}(k)-\sum_{i=1}^k\sigma V(k)|}{\log k} \geq 4z\right)
\leq 2\sum_{n\geq 1}(1+\delta\sigma2^{2^{n}-1})
\exp\{-c\delta z2^n\log 2\},
\end{equation}
where $c>0$ is an absolute constant.
Define 
\[
\Lambda:=\sup_{k\ge 4}\frac{|\widetilde{S}(k)-\sum_{i=1}^k\sigma V(k)|}{\log k}.
\]
Let $z_0=\tfrac{2}{c\delta}$.
Then
\[
\mathbb{E}[\Lambda]
=4\int_0^\infty \mathbb{P}(\Lambda\geq 4z)\mathrm{d}z
\leq 4z_0+4\int_{z_0}^{\infty}\mathbb{P}(\Lambda\geq 4z)\mathrm{d}z.
\]
By \eqref{eq:KMTtail} and an application of Tonelli's theorem,
\begin{align*}
4\int_{z_0}^{\infty}\mathbb{P}(\Lambda\ge 4z)\mathrm{d}z
&\leq 8\sum_{n\geq 1}\int_{z_0}^{\infty}(1+\delta\sigma2^{2^{n}-1})
e^{-c\delta z2^n\log 2}\mathrm{d}z\\
&= \frac{8}{c\delta\log 2}\sum_{n\geq 1}\frac{1}{2^n}
(1+\delta\sigma2^{2^{n}-1})\,e^{-c\delta z_02^n\log 2}.
\end{align*}
With $z_0=\tfrac{2}{c\delta}$, we have $e^{-c\delta z_0\,2^n\log 2}
= e^{-2\cdot 2^n\log 2}=2^{-2^{n+1}}$. Hence,
\begin{align}
4\int_{z_0}^{\infty}\mathbb{P}(\Lambda\ge 4z)\mathrm{d}z
&\leq \frac{8}{c\delta\log 2}\sum_{n\geq 1}
\left(\frac{2^{-2^{n+1}}}{2^n}
+ \frac{\delta\sigma}{2}\cdot \frac{2^{2^{n}}2^{-2^{n+1}}}{2^n}\right)\nonumber\\
&= \frac{8}{c\delta\log 2}\sum_{n\geq 1}\frac{2^{-2^{n+1}}}{2^n}
+ \frac{4\sigma}{c\log 2}\sum_{n\geq 1}\frac{2^{-2^{n}}}{2^n}.
\label{eq:largez}
\end{align}
Both series converge due to the super-exponential decay of $2^{-2^{n}}$. 
Therefore, there exists a
constant $\widetilde{c}>0$ (depending only on $\sigma$ and $\delta$) such that
\begin{equation*}
\mathbb{E}[\Lambda]\leq \widetilde{c}.
\end{equation*}
Since $\log k\le\log t$ for all $4\le k\le t$,
\[
\sup_{4\leq k\leq t} \bigg|\widetilde{S}(k)-\sum_{i=1}^k\sigma V(k)\bigg|
\leq \Lambda\log t.
\]
Taking expectations, we obtain
\[
\mathbb{E}\left[\sup_{4\leq k\leq t} \bigg|\widetilde{S}(k)-\sum_{i=1}^k\sigma V(k)\bigg|\right]\leq \widetilde{c} \log t.
\]
We now control the first three terms. Note that
\[
\mathbb{E}\left[\max_{0\le k\le 3}\bigg|\widetilde{S}(k)-\sum_{i=1}^k\sigma V(k)\bigg|\right]
\le
\mathbb{E}\left[\max_{0\le k\le 3}|\widetilde{S}(k)|\right]
+\mathbb{E}\left[\max_{0\le k\le 3}\bigg|\sum_{i=1}^k\sigma V(i)\bigg|\right].
\]
The partial sums $\{\widetilde{S}_k\}_{k\in \mathbb{N}}$ and    $\{\sum_{i=1}^k V(i)\}_{k\in \mathbb{N}}$  form square-integrable martingales. Applying Jensen’s inequality and Doob’s maximal inequality yields
\begin{align*}
  &\mathbb{E}\left[\max_{0\le k\le 3}|\widetilde{S}(k)|\right]
\leq 
\sqrt{\mathbb{E}\left[\max_{0\le k\le 3} \widetilde{S}(k)^2\right]}\leq \sqrt{4\mathbb{E}[\widetilde{S}(3)^2]}
= 2\sqrt{3}\sigma,\\
&\mathbb{E}\left[\max_{0\le k\le 3}\bigg|\sum_{i=1}^k V(i)\bigg|\right]
\leq 
\sqrt{\mathbb{E}\left[\max_{0\le k\le 3} \bigg(\sum_{i=1}^k V(i)\bigg)^2\right]}\leq \sqrt{4\mathbb{E}\bigg[\sum_{i=1}^3 V(i)^2\bigg]}
= 2\sqrt{3}.
\end{align*}
Summing the two parts gives
\[
\mathbb{E}\left[\max_{0\le k\le 3}\bigg|\widetilde{S}_k-\sum_{i=1}^k\sigma V(i)\bigg|\right]
\le
4\sqrt{3}\sigma.
\]
Thus, there exists a constant $\bar{c}$ depending only on $\sigma$ and $\bar{\delta}$ such that \eqref{ieq:KMT} holds,  using the fact that the sequence $\{V(k)\}_{k\in \mathbb{N}}$ can be embedded into a standard Brownian motion $W=\{W_t\}_{t\geq 0}$. The second part follows directly from the choice of $\bar{c}$.
\end{proof}

Finally, combining Lemmas~\ref{lem:sharp-lb} and~\ref{lem:KMT}, we derive the following lower bound for the maximum of a drifted random walk. This is the key result used in the main proof of Section~\ref{sec:lower}.

\begin{lemma}[Lower Bound for the Maximum of a Drifted Random Walk]\label{lemma:bound_negative_drift}
Let $\{Z(k)\}_{k\in \mathbb{N}}$ satisfy the same conditions as in Lemma~\ref{lem:KMT}.
Define the partial sum $S(t)=\big(\sum_{k=1}^{t}Z(k)\big)-\theta t$ with $\theta\geq 0$.
Then, for all $t \geq \psi^2$ and for all $\theta$ satisfying $ \theta \leq \sigma/\psi$, the expected maximum satisfies
\begin{align}\label{negative_drift_bound}
\mathbb{E}\left[\max_{0\leq k\leq t}{S(k)}\right]\geq 
  \frac{\underline{c}}{2}\min\left\{\sigma\sqrt{t}, \frac{\sigma^2}{\theta}\right\}, 
\end{align}
where $\psi$ is the unique positive solution to $\frac{4(\bar{c}/\sigma+1)}{\underline c}\log(1+x)+\frac{8}{\underline c}= x$; $\underline{c}$  and $\bar{c}$ are the constants defined in Lemmas~\ref{lem:sharp-lb} and~\ref{lem:KMT} respectively.
\end{lemma}
\begin{remark}
The equation $\frac{4(\bar{c}/\sigma+1)}{\underline c}\log(1+x)+\frac{8}{\underline c}= x$ admits a unique positive root.
\end{remark}

\begin{proof}[Proof of Lemma~\ref{lemma:bound_negative_drift}]
Let $X_s:=\sigma W_s-\theta s$
and $M_s:=\sup_{0\le u\le s} X_u$ for $s>0$, where $W$ denotes a standard Brownian motion. In the following, the index $k$ ranges over integers, while $s$ and $u$ vary over real numbers. For an integer $t\geq 1$, note that
\[
\max_{0\le k\le t} S(k)
\geq
\max_{0\le k\le t} X_k-\max_{0\le k\le t}\big|S(k)-X_k\big|
=\max_{0\le k\le t} X_k - \max_{0\le k\le t}\left|\sum_{i=1}^kZ(i)-\sigma W_k\right|.
\]
Taking expectations and using   Lemma~\ref{lem:KMT}  yields
\begin{align*}
\mathbb{E}\left[\max_{0\le k\le t} S(k)\right]
\geq
\mathbb{E}\left[\max_{0\le k\le t} X_k\right] - \bar{c}\log(1+t).
\end{align*}
Moreover, for all $t \geq 1$,
\begin{align}\label{discrete_difference}
  \mathbb{E}\left[\max_{0\le k\le t} X_k\right] \geq  \mathbb{E}\left[\max_{0\le s\le t} X_s\right]-\sigma\left(\sqrt{2\log(4t)}+\frac{1}{\sqrt{2\log(4t)}}\right),
\end{align}
whose proof will be completed later. 
Since $\sqrt{2y} \leq y + 1$ for all $y \ge 0$, we obtain
\begin{align*}
    \mathbb{E}\left[\max_{0\le k\le t} S(k)\right]\geq\mathbb{E}[M_t] - ((\bar{c}+\sigma)\log(1+t)+4\sigma).
\end{align*}
We first consider $\theta>0$. Let $u:=t\wedge(\sigma/\theta)^2$. Then
\begin{equation}\label{eq:additive}
\mathbb{E}\left[\max_{0\le k\le t} S(k)\right]
\geq \mathbb{E}\left[\max_{0\le k\le u} S(k)\right]\geq
\mathbb{E}[M_u] - ((\bar{c}+\sigma)\log(1+u)+4\sigma),
\quad u=t\wedge(\sigma/\theta)^2.
\end{equation}
By Lemma~\ref{lem:sharp-lb}, for every $u>0$,
\begin{equation}\label{eq:bm-min}
\mathbb{E}[M_u]\geq\underline{c}\,\min\left\{\sigma\sqrt{u},\frac{\sigma^2}{\theta}\right\}=\underline{c}\min\left\{\sigma\sqrt{t},\frac{\sigma^2}{\theta}\right\}.
\end{equation}
Combining \eqref{eq:additive} and \eqref{eq:bm-min} gives the additive-error inequality
\begin{align*}
\mathbb{E}\left[\max_{0\le k\le t} S(k)\right]
\geq
\underline{c}\min\left\{\sigma\sqrt{t},\frac{\sigma^2}{\theta}\right\}-\Big((\bar{c}+\sigma)\log\Big(1+t\wedge(\tfrac{\sigma}{\theta})^2\Big)+4\sigma\Big).
\end{align*}
To obtain \eqref{negative_drift_bound}, it suffices to ensure
\[
(\bar{c}/\sigma+1)\log\Big(1+t\wedge(\tfrac{\sigma}{\theta})^2\Big)+4\leq \frac{\underline c}{2}\min\left\{\sqrt{t},\frac{\sigma}{\theta}\right\}.
\]
 By the definition of $\psi$, for $t \geq \psi^2$, we have 
\begin{align}\label{sqrt_ieq}
\frac{4(\bar{c}/\sigma+1)}{\underline c}\log(1+\sqrt{t})+\frac{8}{\underline c}\leq \sqrt{t}.  
\end{align}
Hence,
\[
(\bar{c}/\sigma+1)\log\Big(1+t\wedge(\tfrac{\sigma}{\theta})^2\Big)+4\leq (\bar{c}/\sigma+1)\log(1+t)+4 \leq 2(\bar{c}/\sigma+1)\log(1+\sqrt{t})+4\leq \frac{\underline c}{2}\sqrt{t}.
\]
where the second inequality follows from $(1+\sqrt t)^2\geq 1+t$.

Let $r:=\sigma/\theta$. Under the assumption on $\theta$,   we have $r\geq \psi$, and thus
\[
\frac{4(\bar{c}/\sigma+1)}{\underline c}\log(1+r)+\frac{8}{\underline c}\leq r.
\]
Then
\[
(\bar{c}/\sigma+1)\log\Big(1+t\wedge(\tfrac{\sigma}{\theta})^2\Big)+4\leq (\bar{c}/\sigma+1)\log(1+r^2)+4 \leq 2(\bar{c}/\sigma+1)\log(1+r)+4\leq \frac{\underline c}{2}r.
\]
Combining the two cases completes the proof for case $\theta>0$.
When $\theta=0$, 
\begin{align*}
    \mathbb{E}[M_t] =\sigma\sqrt{2t/\pi}\geq \sigma\underline{c}\sqrt{t},
\end{align*}
and, by applying~\eqref{sqrt_ieq}, the proof proceeds in the same manner.

Now we prove \eqref{discrete_difference}. Let $\widetilde{X}_s:=X_k$ and $\widetilde{W}_s:=W_k$
 for $k\leq s<k+1$. Note that
\begin{align*}
\max_{0\le k\le t} \widetilde{X}_k=\max_{0\le s\le t} \widetilde{X}_s
\geq
\max_{0\le s\le t} X_s-\max_{0\le s\le t}\big|X_s-\widetilde{X}_s\big|. 
\end{align*}
It therefore suffices to show that
\begin{align*}
\max_{0\leq s\leq t}\big|W_s-\widetilde{W}_s\big| \leq\sqrt{2\log(4t)}+\frac{1}{\sqrt{2\log(4t)}}.
\end{align*}
 On each interval $[k, k+1)$, write $s = k + u$ with $u \in [0,1)$. Let $t\geq 1$ be an integer.
By the stationary and independent increments of Brownian motion,
\[
\sup_{0 \leq s\leq t} |\widetilde{W}_s- W_s|
= \max_{0 \leq k \leq t-1}\sup_{0 \leq u \leq 1}|W_{k+u} - W_k|
\stackrel{d}{=}\max_{1 \leq j \leq t}Y_j,
\]
where $Y_1, \dots, Y_t$ are i.i.d.\ copies of $Y :=\sup_{0 \leq u \le 1} |W_u|.$
By the reflection principle and the standard Gaussian tail bound $1-F_{\mathcal{N}}(x)\le e^{-x^2/2}$  which holds for all $x\ge0$, we have
\[
\mathbb{P}(Y>x) 
= \mathbb{P}\left(\sup_{0 \leq u\leq 1} |W_u|>x\right)
\leq 2\mathbb{P}\left(\sup_{0\leq u\leq 1} W_u>x\right)
= 4(1-F_{\mathcal{N}}(x))\leq 4e^{-x^2/2}, \quad x \geq 0.
\]
Let $ \widetilde{Y}_t:= \max_{1 \leq j \leq t} Y_j$. 
For any $a>0$, we decompose
\[
\mathbb{E}[\widetilde{Y}_t]
=\int_{0}^{\infty}\mathbb{P}(\widetilde{Y}_t\geq x)\mathrm{d}x
\leq a+\int_{a}^{\infty}\mathbb{P}(\widetilde{Y}_t\geq x)\mathrm{d}x. 
\]
Since
\[
\mathbb{P}(\widetilde{Y}_t\ge x)
\leq \sum_{k=1}^t \mathbb{P}(Y_k\geq x)
= t\mathbb{P}(Y\geq x)
\leq 4te^{-x^2/2},
\]
we have
\[
\mathbb{E}[\widetilde{Y}_t]
\le a+4t\int_{a}^{\infty} e^{-x^2/2}\mathrm{d}x\leq a+\frac{4t}{a}e^{-a^2/2},
\]
where the last inequality follows from integration by parts.
Choosing $a:=\sqrt{2\log(4t)}$ yields
\[
\mathbb{E}[\widetilde{Y}_t] \leq \sqrt{2\log(4t)}+\frac{1}{\sqrt{2\log(4t)}}.
\]
Therefore, we complete the proof.\end{proof}

\subsection{Proofs of Theorems and Supporting Results for Section~\ref{sec:lower}}\label{appendix_thm_lower_bound}
\subsubsection{Auxiliary Lemma}\label{appendix_lemmas}
\begin{lemma}[Lower Bound on Total Queue Length]\label{lem: queuelength_lower_bound}
    Given the arrival process $\{A(t)\}_{t\in \mathbb{N}}$ and the departure process $\{D(t)\}_{t\in \mathbb{N}_0}$, the total queue length at time $T\geq 1$ satisfies
    \begin{align}
    \sum_{i=1}^n Q_i(T) & \geq\max\bigg\{\sum_{t=1}^{T-1}\Big(\sum_{i=1}^nA_i(t)-\sum_{i=1}^nD_i(t)\Big),\sum_{t=2}^{T-1}\Big(\sum_{i=1}^nA_i(t)-\sum_{i=1}^nD_i(t)\Big),\dots,\nonumber\\
    & \quad \quad \quad \quad \sum_{i=1}^nA_i(T-1)-\sum_{i=1}^nD_i(T-1),0\bigg\}+\sum_{i=1}^nA_i(T).\label{appen:lower_queue1}
    \end{align}
\end{lemma}
\begin{proof}
Let $\Delta(t)=A(t)-D(t)$ for all $t\geq 1$.  By \eqref{queue_recursion}, for $1\leq i\leq n$ and $T\geq 1$, we have
\begin{align*}
    &Q_i(T)-A_i(T)\nonumber\\
    &=\max\{Q_i(T-1)-D_i(T-1),0\} \nonumber \\
          &=\max\{\max\{Q_i(T-2)-D_i(T-2),0\}+A_i(T-1)-D_i(T-1),0\} \nonumber \\
          &=\max\{Q_i(T-2)-D_i(T-2)+\Delta_i(T-1),\Delta_i(T-1),0\}\nonumber \\
          &=\max\bigg\{Q_i(1)-D_i(1)+\sum_{t=2}^{T-1}\Delta_i(t),\sum_{t=2}^{T-1}\Delta_i(t),\dots,\Delta_i(T-1),0\bigg\}\nonumber \\
          &=\max\bigg\{\sum_{t=1}^{T-1}\Delta_i(t),\sum_{t=2}^{T-1}\Delta_i(t),\dots,\Delta_i(T-1),0\bigg\},
    \end{align*}
where the last equation follows that $Q_i(0)=0$ and then $Q_i(1)=A_i(1)$. Therefore, we have
\begin{align*}
    \sum_{i=1}^n Q_i(T)&=\sum_{i=1}^n\max\bigg\{\sum_{t=1}^{T-1}\Delta_i(t),\sum_{t=2}^{T-1}\Delta_i(t),\dots,\Delta_i(T-1),0\bigg\}+\sum_{i=1}^nA_i(T)\\
    &\geq \max\bigg\{\sum_{t=1}^{T-1}\sum_{i=1}^n\Delta_i(t),\sum_{t=2}^{T-1}\sum_{i=1}^n\Delta_i(t),\dots,\sum_{i=1}^n\Delta_i(T-1),0\bigg\}+\sum_{i=1}^nA_i(T),
    \end{align*}
    which completes the proof.
\end{proof}

\subsubsection{Proof of Theorem \ref{thm_lower_bound}}\label{appendix_thm_lower_bound_general}

\begin{proof}[Proof of Theorem \ref{thm_lower_bound}.]
Fix a time-invariant scheduling sequence $\mathcal{D}$ with $\mathcal{D}_t=\mathcal{D}_\star$ for all $t\in \mathbb{N}_0$, and assume that $\mathcal{D}_\star$ satisfies the condition in the model class $\mathcal{M}^ {\rho}(B,C_{\mathrm{a}}, C_{\mathrm{d}})$.
Let 
$$M=\max\left\{\sum_{i=1}^n d_i:d\in \mathcal{D}_\star\right\}$$
denote the maximum total service capacity in a single time slot.
In the following, we derive a lower bound for the right-hand-side of inequality~\eqref{appen:lower_queue1}  by constructing a family of hard instances $(A,S)$. 
Here the arrival process $A$ and the random departure field $S$ are constructed independently. We focus on the case $C_{\mathrm{a}}^2 + C_{\mathrm{d}}^2 > 0$; as the case $C_{\mathrm{a}}^2 + C_{\mathrm{d}}^2 = 0$ is straightforward.

 {\bf Arrival Process}: We construct an i.i.d.\ sequence $\{A(t)\}_{t\in \mathbb{N}}$ over time. At time $t$, 
\begin{align*}
    &\mathbb{P}(A_1(t)=K_{\mathrm{a}}\lambda_1, A_2(t)=K_{\mathrm{a}}\lambda_2,\dots,A_n(t)=K_{\mathrm{a}}\lambda_n)=1/K_{\mathrm{a}},\\
    &\mathbb{P}(A_1(t)=0, A_2(t)=0,\dots,A_n(t)=0)=(K_{\mathrm{a}}-1)/K_{\mathrm{a}},
\end{align*}
where 
\begin{align} \label{choice_lambda_Ka}
     \lambda/\rho \in \argmin_{\substack{
    \gamma \in \Pi(\mathcal{D}_\star) \\
    \sum_{i=1}^n \gamma_i = M
}} \left\{  \sum_{i=1}^n \gamma_i^2 \right\}\quad \mbox{and} \quad K_{\mathrm{a}}=\frac{nC_{\mathrm{a}}^2}{\sum_{i=1}^n\lambda_i^2}+1.
\end{align}
Note that
\begin{align}\label{eq:mean_variance_a}
   \mathbb{E}[A(t)]=\lambda, \quad \text{and} \quad \text{Var}(A_i(t))=\frac{(K_{\mathrm{a}}-1)^2\lambda_i^2+\lambda_i^2(K_{\mathrm{a}}-1)}{K_{\mathrm{a}}}=(K_{\mathrm{a}}-1)\lambda_i^2.
\end{align}
The choice of $K_{\mathrm{a}}$ ensures that
  $\frac{1}{n}\sum_{i=1}^n\text{Var}(A_i(t))= C_{\mathrm{a}}^2$. 

{\bf Random Departure Field}:  We construct an independent sequence $\{S_t\}_{t\in \mathbb{N}_0}$ over time. At time $t$, for each $d \in \mathcal{D}_t$,
    \begin{align*}
    \mathbb{P}(S_d(t)=K_{\mathrm{d}} d)=1/K_{\mathrm{d}},\ \text{and} \ \mathbb{P}(S_d(t)=0)=(K_{\mathrm{d}}-1)/K_{\mathrm{d}},
\end{align*}
where
\begin{align}\label{choice_Kd}
       K_{\mathrm{d}}=\frac{nC_{\mathrm{d}}^2}{\max_{d\in \mathcal{D}_\star}\sum_{i=1}^n d_i^2}+1.
\end{align}
If the scheduling policy selects $d(t)\in \mathcal{D}_t$, the actual departure $D(t)$ is distributed as
\begin{align*}
    &\mathbb{P}(D_1(t)=K_{\mathrm{d}} d_1(t), D_2(t)=K_{\mathrm{d}} d_2(t),\dots,D_n(t)=K_{\mathrm{d}} d_n(t))=1/K_{\mathrm{d}},\\
    &\mathbb{P}(D_1(t)=0, D_2(t)=0,\dots,D_n(t)=0)=(K_{\mathrm{d}}-1)/K_{\mathrm{d}}.
\end{align*}
Similarly, the choice of $K_{\mathrm{d}}$ ensures $S$ satisfies the conditions of the model class $\mathcal{M}^ {\rho}(B, C_{\mathrm{a}}, C_{\mathrm{d}})$. 

The departures satisfy $\mathbb{E}[D(t)]=d(t)$, and $\{\sum_{i=1}^nD_i(t)\}_{t \in \mathbb{N}_0}$ forms a sequence of binary random variables:
\begin{align*}
    \mathbb{P}\left(\sum_{i=1}^nD_i(t)=K_{\mathrm{d}} \sum_{i=1}^nd_i(t)\right)=1/K_{\mathrm{d}},\ \text{and} \ 
    \mathbb{P}\left(\sum_{i=1}^nD_i(t)=0\right)=(K_{\mathrm{d}}-1)/K_{\mathrm{d}}.
\end{align*}
Let $\{G(t)\}_{t\in \mathbb{N}}$ be an i.i.d. sequence of random variables defined by
\begin{align*}
    G(t)=\frac{M}{\sum_{i=1}^n d_i(t)}\sum_{i=1}^n D_i(t).
\end{align*}
 Since $\sum_{i=1}^nd_i(t) \leq M$ for any scheduling policy, then $G(t)\geq \sum_{i=1}^n D_i(t)$. Let $X(t)=\sum_{i=1}^n A_i(t)$. By \eqref{appen:lower_queue1}, the queue length satisfies
\begin{align*}
    \sum_{i=1}^n Q_i(T) & \geq\max\left\{\sum_{t=1}^{T-1}\Big(X(t)-G(t)\Big),\sum_{t=2}^{T-1}\Big(X(t)-G(t)\Big),\dots, X(T-1)-G(T-1),0\right\}+X(T).
    \end{align*}
 Observe that
 \[
 X(t)-G(t)=(X(t)-\rho M)+(M-G(t))-(1-\rho)M.
 \]
Note that $\{(X(t)-\rho M)-(M-G(t))\}_{t \in \mathbb{N}}$ forms an i.i.d.\ sequence of random variables, satisfying
\begin{align*}
\mathbb{E}[(X(t)-\rho M)-(M-G(t))]&=0, \\
\text{Var}((X(t)-\rho M)+(M-G(t)))&=M^2(\rho^2(K_{\mathrm{a}}-1)+(K_{\mathrm{d}}-1)),
\end{align*}
where the variance follows from the independence between $A$ and $S$. Thus, by Lemma \ref{lemma:bound_negative_drift}, we obtain the following lower bound: for all $T \geq \psi_1^2+1$ and $(1-\rho)M \leq \sigma/\psi_1$, 
\begin{align}
    &\mathbb{E}\left[\max\left\{\sum_{t=1}^{T-1}\Big(X(t)-G(t)\Big),\sum_{t=2}^{T-1}\Big(X(t)-G(t)\Big),\dots, X(T-1)-G(T-1),0\right\}\right]\nonumber\\
    &=\mathbb{E}\left[\max\left\{0,X(1)-G(1),\sum_{t=1}^{2}\Big(X(t)-G(t)\Big),\dots,\sum_{t=1}^{T-1}\Big(X(t)-G(t)\Big) \right\}\right]\nonumber\\
    &\geq \frac{\underline{c}}{2}\min\left\{M\sqrt{(\rho^2(K_{\mathrm{a}}-1)+(K_{\mathrm{d}}-1))(T-1)}, \frac{M(\rho^2(K_{\mathrm{a}}-1)+(K_{\mathrm{d}}-1))}{(1-\rho)}\right\},\label{lower_queue}
\end{align}
where $\psi_1$ is the unique positive solution to $\frac{4(\bar{c}/\sigma+1)}{\underline c}\log(1+x)+\frac{8}{\underline c}= x$; $\sigma=M\sqrt{\rho^2(K_{\mathrm{a}}-1)+(K_{\mathrm{d}}-1)}$, $\underline{c}$ and $\bar{c}$ are the constants defined in Lemmas~\ref{lem:sharp-lb} and~\ref{lem:KMT} respectively. 

We now construct a specific scheduling set:
\begin{align}\label{construction_D}
\mathcal{D}_\star=\left\{(x_1,x_2,\dots,x_n) :  0\leq x_i\leq B \ \text{for} \ 1\leq i\leq n \right\}.
\end{align}
Note that this construction of $\mathcal{D}_\star$ ensures that scheduling sequence $\mathcal{D}$ satisfies the condition in the model class $\mathcal{M}^ {\rho}(B,C_{\mathrm{a}}, C_{\mathrm{d}})$. In this case,  we choose $\lambda=(\rho B, \rho B,\dots,\rho B)$ so that 
\begin{align}\label{choice_parameter}
    M=nB,\ K_{\mathrm{a}}-1=\frac{C_{\mathrm{a}}^2}{\rho^2B^2}, \ \text{and} \ K_{\mathrm{d}}-1=\frac{C_{\mathrm{d}}^2}{B^2}.
\end{align}
Substitute \eqref{choice_parameter} into \eqref{lower_queue}. Observe that for $\rho \in [1/2,1]$, the random variables $(X(t)-\rho M)+(M-G(t))$ have uniformly bounded support across all $\rho$. Indeed,
\[
|(X(t)-\rho M)+(M-G(t))|\leq \big(\rho((K_{\mathrm{a}}-1) \vee 1)+(K_{\mathrm{d}}-1)\vee 1\big)M\leq n\left(\frac{2C_{\mathrm{a}}^2}{B} \vee 1+\frac{C_{\mathrm{d}}^2}{B}\vee 1\right)
\]
and the variance of $(X(t)-\rho M)+(M-G(t))$ is  $\sigma^2=n^2(C_{\mathrm{a}}^2+C_{\mathrm{d}}^2)$. Consequently, their Sakhanenko parameters are uniformly bounded away from zero by a positive constant depending only on $n$, $C_{\mathrm{a}}$, $C_{\mathrm{d}}$ and $B$. By Lemma~\ref{lem:KMT}, the constant $\bar{c}$ can therefor be chosen to depend only on $n$, $C_{\mathrm{a}}$, $C_{\mathrm{d}}$ and $B$.

Let $c_1=\underline{c}/2$ and define $\psi_1(n,B,C_{\mathrm{a}},C_{\mathrm{d}})$ as the unique positive solution to $\frac{4(\bar{c}/\sigma+1)}{\underline c}\log(1+x)+\frac{8}{\underline c}= x$, we recover the result in \eqref{ieq:lower_bound_dependent_rho}.
\end{proof}

\subsection{Proofs of Theorems and Supporting Results for Section~\ref{sec:upper}}
\subsubsection{Formal Proof of the Upper Bound in (\ref{ieq:upp_bound})}\label{appendix_thm_upper}

For any scheduling sets, arrival processes and random departure fields in $\mathcal{M}^ {\rho}(B,C_{\mathrm{a}}, C_{\mathrm{d}})$, and any scheduling policy $\Phi$, consider the one-step Lyapunov drift. Here for notational simplicity and without risk of ambiguity, we write $\mathbb{E}^\Phi_{(A, S)}$ as $\mathbb{E}$. First, observe that
\begin{align}\label{ieq:derandom_d}
    \mathbb{E}\left[\sum_{i=1}^n(\max\{Q_i(t)-D_i(t),0\})^2\middle| \mathcal{F}_t\right]\leq \mathbb{E}\left[\sum_{i=1}^n(\max\{Q_i(t)-d_i(t),0\})^2\middle| \mathcal{F}_t\right]+\sum_{i=1}^n\text{Var}(D_i(t)).
\end{align}
To see this, define $g(d_i)=(\max\{x_i-d_i,0\})^2$. Since $|g'(d_i)-g'(s_i)|\leq 2|d_i-s_i|$, the descent lemma yields for any $D_i$,
\begin{align*}
   g(D_i)\leq g(d_i)+g'(d_i)(D_i-d_i)+(D_i-d_i)^2. 
\end{align*}
Taking conditional expectations given $\mathcal{F}_t$ gives
\begin{align*}
   \mathbb{E}[g(D_i)|\mathcal{F}_t]\leq \mathbb{E}[g(d_i)|\mathcal{F}_t]+\text{Var}(D_i). 
\end{align*}
Taking the summation over $i$ completes the proof of~\eqref{ieq:derandom_d}.

The one-step Lyapunov drift satisfies
\begin{align*}
    &\mathbb{E}\left[V\big(Q(t+1)-A(t+1)\big)-V\big(Q(t)-A(t)\big)\mid \mathcal{F}_t\right]\\
    &=\mathbb{E}\left[\sum_{i=1}^n(\max\{Q_i(t)-D_i(t),0\})^2\middle| \mathcal{F}_t\right]-\sum_{i=1}^n(Q_i(t)-A_i(t))^2\\
    &\leq \sum_{i=1}^n\mathbb{E}\left[(\max\{Q_i(t)-d_i(t),0\})^2\middle| \mathcal{F}_t\right]+\sum_{i=1}^n\text{Var}(D_i(t))-\sum_{i=1}^n(Q_i(t)-A_i(t))^2\\
    &\leq \sum_{i=1}^n\left(\mathbb{E}\left[(Q_i(t)-d_i(t))^2\middle| \mathcal{F}_t\right]-Q_i(t)^2+2Q_i(t)A_i(t)-A_i(t)^2\right)+\sum_{i=1}^n\text{Var}(D_i(t))\\
    &= \sum_{i=1}^n\mathbb{E}\left[d_i(t)^2-A_i(t)^2+2Q_i(t)\big(A_i(t)-d_i(t)\big)\middle| \mathcal{F}_t\right]+\sum_{i=1}^n\text{Var}(D_i(t))\\
    &= f(Q(t),d(t))+r(Q(t), A(t))+\sum_{i=1}^n\text{Var}(D_i(t)),
\end{align*}
where
\begin{align*}
f(Q(t),d) &:= \underbrace{\mathbb{E}\left[2 \sum_{i=1}^n Q_i(t)\big(\lambda_i(t) - d_i\big)\middle| \mathcal{F}_t\right]}_{\text{first-order term}}+\underbrace{\sum_{i=1}^n\left(d_i^2 - \lambda_i(t)^2\right)}_{\text{second-order term}},
\end{align*}
and 
\begin{align*}
r(Q(t),A(t))=&\sum_{i=1}^n \left[2Q_i(t)(A_i(t)-\lambda_i(t))+\lambda_i(t)^2-A_i(t)^2\right].
\end{align*}
Note that
\begin{align*}
   &\mathbb{E}\big[Q_i(t)A_i(t)\big] \\
   &=\mathbb{E}\left[\big(Q_i(t)-A_i(t)\big)A_i(t)\right]+\mathbb{E}\big[A_i(t)^2\big]\\
   &\overset{(a)}{=}\mathbb{E}\big[Q_i(t)-A_i(t)\big]\mathbb{E}\big[A_i(t)\big]+\mathbb{E}\big[A_i(t)^2\big]\\
   &\overset{(b)}{=}\mathbb{E}\big[Q_i(t)\big]\lambda_i(t)+\text{Var}(A_i(t)),
\end{align*}
where ($a$) follows from the independence between $Q_i(t)-A_i(t)$ and $A_i(t)$, and ($b$) follows from the relation $\mathbb{E}\big[A_i(t)^2\big]=\lambda_i(t)^2+\text{Var}(A_i(t))$.

Then we have
\[
\mathbb{E}[r(Q(t),A(t))]=\sum_{i=1}^n\text{Var}(A_i(t)).
\]

By taking expectation and summing over for $1\leq t\leq T-1$, we have
\begin{align}
  &\mathbb{E}[V(Q(T)-A(T)]\nonumber \\
  &\leq  \mathbb{E}[V(Q(1)-A(1))]+\sum_{t=1}^{T-1}\mathbb{E}[f(Q(t),d(t))]+\sum_{t=1}^{T-1}\sum_{i=1}^{n}(\text{Var}(A_i(t))+\text{Var}(D_i(t)))\nonumber\\
  &\leq \sum_{t=1}^{T-1}\mathbb{E}[f(Q(t),d(t))]+(T-1)n(C_{\mathrm{a}}^2+C_{\mathrm{d}}^2),\label{ieq:drift_bound_app}
\end{align}
where the last inequality follows from the relation $Q(1)=A(1)$.
Note that
\begin{align*}
  \bigg(\mathbb{E}\Big[\sum_{i=1}^n\big(Q_i(T)-A_i(T)\big)\Big]\bigg)^2&\overset{(c)}{\leq} \mathbb{E}\bigg[\Big(\sum_{i=1}^n\big(Q_i(T)-A_i(T)\big)\Big)^2\bigg]\\
  &\overset{(d)}{\leq} n\mathbb{E}\bigg[\sum_{i=1}^n \big(Q_i(T)-A_i(T)\big)^2\bigg]\\&=n\mathbb{E}\big[V\big(Q(T)-A(T)\big)\big].
\end{align*}
Here ($c$) follows from Jensen's inequality and ($d$) follows from the Cauchy--Schwartz inequality.
Substituting this expression into \eqref{ieq:drift_bound_app}, we have
\begin{align*}
   \mathbb{E}\Bigg[\sum_{i=1}^n Q_i(T) \Bigg]\leq 
   n\sqrt{\sum_{t=1}^{T-1} \frac{1}{n}\ \mathbb{E} [f(Q(t),d(t))] +(T-1)(C_{\mathrm{a}}^2+C_{\mathrm{d}}^2)}+\sum_{i=1}^n \mathbb{E}[A_i(T)].
\end{align*}

\subsubsection{Formal Proof of Theorem \ref{thm:lower_bound_maxweight}}\label{appendix_thm_lower_maxweight}

\begin{proof}[Proof of Theorem~\ref{thm:lower_bound_maxweight}]

\begin{enumerate}[label={\bf Step \arabic*.}, leftmargin=*, align=left]
\item 
Fix an instance $(\mathcal{D},A,S)$ in the model class $\mathcal{M}^{1}(B,C_{\mathrm{a}},C_{\mathrm{d}})$. For $\rho \in (0,1]$, let $\{Q^{(\rho)}(t)\}_{t\in \mathbb{N}_0}$ and $\{d^{(\rho)}(t)\}_{t\in \mathbb{N}_0}$ denote the queue length process and the corresponding MaxWeight decisions when the arrival process $\{A(t)\}_{t\in \mathbb{N}}$ is replaced by the scaled process $\{\rho A(t)\}_{t\in \mathbb{N}}$. For brevity, when $\rho=1$, we write $\{Q(t)\}_{t\in \mathbb{N}_0}$ and $\{d(t)\}_{t\in \mathbb{N}_0}$ in place of $\{Q^{(1)}(t)\}_{t\in \mathbb{N}_0}$ and $\{d^{(1)}(t)\}_{t\in \mathbb{N}_0}$, respectively. Suppose that each scheduling set $\mathcal{D}_t$ is a polytope and that the MaxWeight policy yields a unique maximizer of $\langle Q(t),d\rangle$ at every time (with the exception of $t=0$, where we fix an arbitrary schedule). Under this assumption, we establish that
\[
\mathbb{E}_{(A, S)}\left[\sum_{i=1}^{n} Q_i^{(\rho)}(T)\right]
\]
is continuous in $\rho$ on an interval $(1-\delta,1]$ for some $\delta>0$. 
 
By definition, the $\MaxWeight$ policy always selects an extreme point of the scheduling set. 
Since the extreme points of the polytope is finite, there exists $\delta>0$  such that for $\rho \in (1-\delta,1]$, the maximizer of $\langle Q^{(\rho)}(t),d\rangle$ remains unchanged, that is,
$d^{(\rho)}(t) = d(t)$ for all $t \le T$ when $\rho \in (1-\delta,1]$. 
Consequently, for every sample path, $\sum_{i=1}^n Q_i^{(\rho)}(t)$ is continuous in~$\rho$ on the interval $(1-\delta,1]$. 
Moreover, because
\[
\sum_{i=1}^n Q_i^{(\rho)}(T) \leq \sum_{t=1}^T\sum_{i=1}^n A_i(T),
\]
the family $\{\sum_{i=1}^nQ_i^{(\rho)}(T)\}_{\rho\in(1-\delta,1]}$ is uniformly integrable. 
By the dominated convergence theorem, the expectation $\mathbb{E}_{(A, S)}[\sum_{i=1}^n Q^{(\rho)}_i(T)]$ is continuous in~$\rho$ on the interval $(1-\delta,1]$.

\item
We begin by considering a family of instances $\mathcal{I}_{b,1}$, which is given in~\eqref{exp_qb} and illustrated in Figure~\ref{fig:maxweight_lowerbound_q} for the case $q=1$.
%         The scheduling sequence $\mathcal{D}$ is time-invariant with $\mathcal{D}_t=\mathcal{D}_\star$ for all $t$; the arrival process $A^o$ is deterministic;  and the random departure field satisfies $S_d(t)=d$ for each $t$.
% That is, if the scheduling policy selects $d(t)\in \mathcal{D}_t$, then the actual departure is $D(t)=d(t)$.
% The set $\mathcal{D}_\star$  and the process $A^o$ are illustrated in Figure~\ref{fig:maxweight_lowerbound_q} for the case $q=1$.
% % \begin{figure}[htbp]
% %   \centering
% %   \includegraphics[width=0.4\textwidth]{maxweight_lowerbound.pdf}
% %   \caption{Construction of $\mathcal{I}_{b,1}$. \vt{since this figure is the instantiation of Figure \ref{fig:maxweight_lowerbound_q} when $q=1$, I don't think it's necessary.}}
% %   \label{fig:maxweight_lowerbound}
% % \end{figure}
% Formally,
% \begin{align}
%     &\mathcal{D}_\star=\{d \in \mathbb{R}^2: d=x(b,0)+(1-x)(0,1), 0\leq x \leq 1\}; \nonumber\\
%     &A(t)=\Big(1, \frac{b-1}{b}\Big) \ \text{for all} \ t \geq 2, \ \text{and} \ A(1)=\Big(1, \frac{b-1}{b}-\varepsilon\Big);\nonumber \\
%     & S_d^o(t)=d,  \qquad\forall d \in \mathcal{D}_\star,\label{constructed_model}
% \end{align}  
% with $b>1$ a rational constant and $\varepsilon>0$ is a small irrational constant. 
For the instance $\mathcal{I}_{b,1}$ with $b\geq 6$, we can show (as done in the proof following Step~5) that for all time horizons $T$ satisfying $\left\lceil \frac{b^2}{b-1}\right\rceil \leq T\leq \left\lceil (\frac{b}{2}-1)^3\right\rceil+1$, 
\begin{align}\label{lower_maxweight:C=0}
 \mathbb{E}_{(A,S)}\left[\sum_{i=1}^2 Q_i(T)\right] \geq \frac{\sqrt{bT}}{2\sqrt{2}}.
\end{align}
% \item We introduce a scaling factor $q$, applied uniformly to both the scheduling set $\mathcal{D}^o_\star$ and the arrival process $A^o$, for the instance $\mathcal{I}_{b,1}$. The resulting instance, denoted by $\mathcal{I}_{b,q}$, is depicted in Figure~\ref{fig:maxweight_lowerbound_q} and defined as follows:
% \begin{align*}
%     &\mathcal{D}_\star=\{d \in \mathbb{R}^2: d=x(qb,0)+(1-x)(0,q), 0\leq x \leq 1\}, \nonumber\\
%     &A(t)=q\Big(1, \frac{b-1}{b}\Big) \ \text{for all} \ t \geq 2, \ \text{and} \ A(1)=q\Big(1, \frac{b-1}{b}-\varepsilon\Big);\nonumber \\
%     & S_d(t)=d,  \qquad\forall d \in \mathcal{D}_\star.
% \end{align*}  
\item We now consider the family of instances $\mathcal{I}_{b,q}$, which is given in~\eqref{exp_qb} and illustrated in Figure~\ref{fig:maxweight_lowerbound_q}.
To ensure that the instance lies within the model class $\mathcal{M}^ {1}(B,C_{\mathrm{a}}, C_{\mathrm{d}})$,  we require
\begin{align*}
  \frac{q^2}{2} \leq B^2, \ \text{and} \ \frac{(qb)^2}{2}\leq B^2.
\end{align*}
Recalling that $b > 1$, both conditions are satisfied by setting $q = \frac{\sqrt{2}B}{b}$.   It then follows that $\mathcal{I}_{b, \frac{\sqrt{2}B}{b}} \in \mathcal{M}^ {1}(B,C_{\mathrm{a}}, C_{\mathrm{d}})$ for all $b > 1$. Consequently, the queue lengths under the MaxWeight policy for the instance $\mathcal{I}_{b, \frac{\sqrt{2}B}{b}}$ are exactly the queue lengths for the instance $\mathcal{I}_{b, 1}$ multiplied by the scaling factor $q = \frac{\sqrt{2}B}{b}$, which can be shown by induction. Suppose $\widetilde{Q}(t)=qQ(t)$. Since MaxWeight selects $d(t) \in \arg \max_{d\in \mathcal{D}_t} \langle Q(t), d \rangle$, it follows that 
$$qd(t) \in \argmax_{d\in q\mathcal{D}_t} \langle Q(t), d \rangle = \argmax_{d\in q\mathcal{D}_t} \langle \widetilde{Q}(t), d \rangle.$$ 
Therefore, recalling $D(t)=d(t)$, the queue dynamics satisfy
\begin{align*}
   \widetilde{Q}(t+1)= \max\{\widetilde{Q}(t) - qD(t), \mathbf{0}\} + qA(t)=q (\max\{Q(t) - D(t), \mathbf{0}\} + A(t))=qQ(t+1).
\end{align*}
By induction, this holds for all $t\in \mathbb{N}$.
As a result, applying inequality~\eqref{lower_maxweight:C=0}, for any instance $\mathcal{I}_{b, \frac{\sqrt{2}B}{b}}$ with $b\geq 6$, and for all time horizons satisfying $\left\lceil \frac{b^2}{b-1}\right\rceil \leq T\leq \left\lceil (\frac{b}{2}-1)^3\right\rceil+1$, we have
\begin{align}\label{lower_maxweight:q}
 \mathbb{E}_{(A,S)}\left[\sum_{i=1}^2 Q_i(T)\right] \geq \frac{B\sqrt{T}}{2\sqrt{b}}.
\end{align}

\item For any $T\geq 9$, we can choose the instance $\mathcal{I}_{b, \frac{\sqrt{2}B}{b}}$ with $b\geq 6$ such that $T=\left\lceil (\frac{b}{2}-1)^3\right\rceil+1$. For this instance, the total queue length at time $T$ satisfies inequality~\eqref{lower_maxweight:q}.
Moreover, since $T=\left\lceil (\frac{b}{2}-1)^3\right\rceil+1$, it follows that $b\leq 3T^{\frac{1}{3}}$, and hence
$$\mathbb{E}_{(A,S)}\left[\sum_{i=1}^2 Q_i(T)\right] \geq \frac{BT^{\frac{1}{3}}}{2\sqrt{3}}.$$
Therefore, for all $T\geq 9$,
$$\sup_{\{\mathcal{I}_{b,\frac{\sqrt{2}B}{b}}: b\geq 6\}} \mathbb{E}_{(A,S)}\left[\sum_{i=1}^2 Q_i(T)\right]\geq \frac{BT^{\frac{1}{3}}}{2\sqrt{3}}.$$

By the continuity of $\mathbb{E}_{(A,S)}\left[\sum_{i=1}^2 Q_i^{(\rho)}(T)\right]$ in $\rho$ on the interval $(1-\delta,1]$ for each instance $\mathcal{I}_{b, \frac{\sqrt{2}B}{b}}$, we further obtain
\[
\liminf_{\rho\uparrow 1}\sup_{\{\mathcal{I}_{b,\frac{\sqrt{2}B}{b}}: b\geq 6\}}\mathbb{E}_{(A,S)}\left[\sum_{i=1}^2 Q_i^{(\rho)}(T)\right]\geq\sup_{\{\mathcal{I}_{b,\frac{\sqrt{2}B}{b}}: b\geq 6\}} \lim_{\rho\uparrow 1}\mathbb{E}_{(A,S)}\left[\sum_{i=1}^2 Q_i^{(\rho)}(T)\right]\geq \frac{BT^{\frac{1}{3}}}{2\sqrt{3}}.
\]
Since $\mathcal{I}_{b, \frac{\sqrt{2}B}{b}} \in \mathcal{M}^ {1}(B,C_{\mathrm{a}}, C_{\mathrm{d}})$ for all $b > 1$, we conclude that for all $T\geq 9$,
\begin{align}\label{lower_maxweight:BT^1/3}
 \liminf_{\rho\uparrow 1}\sup_{\mathcal{M}^ {\rho}(B,C_{\mathrm{a}}, C_{\mathrm{d}})}\mathbb{E}_{( A,S)}\left[\sum_{i=1}^2 Q_i(T)\right] \geq  \liminf_{\rho\uparrow 1}\sup_{\mathcal{M}^ {1}(B,C_{\mathrm{a}}, C_{\mathrm{d}})}\mathbb{E}_{(A,S)}\left[\sum_{i=1}^2 Q_i^{(\rho)}(T)\right] \geq \frac{BT^{\frac{1}{3}}}{2\sqrt{3}},
\end{align}
where the first inequality uses that scaling the arrival process of any instance $(\mathcal{D}, A,S) \in \mathcal{M}^{1}(B,C_{\mathrm{a}}, C_{\mathrm{d}})$ by $\rho$ yields an instance in $\mathcal{M}^{\rho}(B,C_{\mathrm{a}}, C_{\mathrm{d}})$.
\item 
Combining the bound in~\eqref{lower_maxweight:BT^1/3} with the lower bound in~\eqref{ieq:lower_bound_dependent_rho} from Theorem~\ref{thm_lower_bound}, we obtain the following:
 for all $T \geq \psi_1(n,B,C_{\mathrm{a}},C_{\mathrm{d}})^2+9$, we have
\begin{align*}
 \liminf_{\rho\uparrow 1}\sup_{\mathcal{M}^ {\rho}(B,C_{\mathrm{a}}, C_{\mathrm{d}})}\mathbb{E}_{(A,S)}\left[\sum_{i=1}^2 Q_i(T)\right]
&\geq  \max\left\{2c_1\sqrt{(C_{\mathrm{a}}^2+C_{\mathrm{d}}^2)(T-1)}, \frac{BT^{\frac{1}{3}}}{2\sqrt{3}}\right\}\\
&\geq   c_1\sqrt{(C_{\mathrm{a}}^2+C_{\mathrm{d}}^2)(T-1)}+\frac{BT^{\frac{1}{3}}}{4\sqrt{3}},
\end{align*}
which completes the proof of the theorem.
\end{enumerate}
We now prove inequality~\eqref{lower_maxweight:C=0}. Before presenting the formal argument, we first outline the main idea.

\paragraph{Proof Idea for Inequality~\eqref{lower_maxweight:C=0}.}
The choices of $b$ and $\varepsilon$ ensure that no two schedules tie under MaxWeight at finite time, so the policy always selects either $(0,1)$ or $(b,0)$.
Under MaxWeight, one can show: At each time \(t\), MaxWeight selects \((b,0)\) unless 
  \(\frac{Q_2(t)}{Q_1(t)}\ge b\).  
  % As a result, \(Q_2(t)\) accumulates until it finally forces the policy to serve \((0,1)\), by which time \(Q_2\) has grown on the order of approximately\(\sqrt{b\,T}\) for finite $T$.
  As a result, $Q_2(t)$ must accumulate to $b$ before it forces the policy to use $(0,1)$; however, doing so causes $Q_1(t)$ to increase to 2, requiring $Q_2(t)$ to build up to $2b$ before $(0,1)$ can be used again more frequently. More precisely, let $t_0=0$ and
  \begin{align*}
      t_k=\min\{t: Q_2(t)\geq kb\} \ \text{for} \ 1\leq k\leq \left\lfloor \frac{b}{2}\right\rfloor.
  \end{align*}
  Under the MaxWeight policy, we observe the following behavior:  when $t_0\leq t< t_1$, $d(t)=(b,0)$, so the average growth rate of $Q_2$ is $\frac{b-1}{b}$; when $t_1\leq t< t_2$, $Q_2(t)\geq b-1$, and $d(t)$ alternates between $(b,0)$ and $(0,1)$  until $Q_2(t)=2b$, the average growth rate of $Q_2$ is $\frac{b-1}{b}-\frac{1}{2}$; when $t_2\leq t< t_3$, $Q_2(t)\geq 2b-1$, and each use of $(b,0)$ is followed by two uses of $(0,1)$ until $Q_2(t)=3b$, the average growth rate of $Q_2$ is $\frac{b-1}{b}-\frac{2}{3}$.

  More generally, for all $k\leq \lfloor b/2\rfloor$, during the interval $t_{k-1}\leq t< t_k$, $Q_2(t)\geq (k-1)b-1$, and each use of $(b,0)$ is followed by $k$ uses of $(0,1)$ until $Q_2(t)=kb$, the average growth rate of $Q_2$ is $\frac{b-1}{b}-\frac{k-1}{k}$. Moreover, since the interval length satisfies  $t_k-t_{k-1}=\Theta(kb)$, it follows that $t_{k}=\Theta(k^2b)$. Note that for $t_{k} \leq t< t_{k+1}$, $Q_2(t)\geq kb-1$. Combining these observations yields $Q_2(t)= \Theta(\sqrt{bt})$.

\paragraph{ Formal Proof for Inequality~\eqref{lower_maxweight:C=0}.}
  Now we present the formal proof. Consider the MaxWeight policy under the instance~\eqref{exp_qb} with $q=1$,
where $b\geq 6$ is rational and we choose $0<\varepsilon<\frac{b-1}{b}$ to be a small irrational constant. 

% Here we choose $\varepsilon$ such that $\lceil b^2/(b-1)+\varepsilon \rceil = \lceil b^2/(b-1) \rceil$, which is always possible since $b^2/(b-1)$ is never an integer for $b\geq 6$.

  By the queue dynamics, $Q(1)=(1,\frac{b-1}{b}-\varepsilon)$. MaxWeight repeatedly selects $(b,0)$ until $Q_2(t)\geq b$. Thus $Q(t)=(1, \frac{b-1}{b}t-\varepsilon)$ for $1\leq t < \lceil \frac{b(b+\varepsilon)}{b-1}\rceil$. When $t=\lceil \frac{b(b+\varepsilon)}{b-1}\rceil$, we have $Q_2(t)>b=bQ_1(t)$ so MaxWeight choose $(0,1)$. There is no tie in this decision, since $Q_2(t)$ is irrational while $Q_1(t)$ is rational. Note that once $Q_2(t)$ falls below $b$, MaxWeight will always choose $(b,0)$. Consequently,  for all $t \geq \lceil \frac{b(b+\varepsilon)}{b-1}\rceil$,  $Q_2(t)$ can never drop below $b-1$. This ensures that the $-\varepsilon$ term is always preserved in $Q_2(t)$, making it irrational for all $t$. In the following, we consider only time horizons where $Q_1(t) < b$ holds throughout, ensuring that $Q_1(t)$ always remains an integer.
 Consequently, there is never a tie in MaxWeight decisions, and it either selects $(b,0)$ or $(0,1)$. Therefore, we henceforth restrict our analysis to the MaxWeight policy under 
 \begin{align}
    &\mathcal{D}_\star=\{(b,0),(0,1)\}; \nonumber\\
    &A(t)=\Big(1, \frac{b-1}{b}\Big) \ \text{for all} \ t \geq 2, \ \text{and} \ A(1)=\Big(1, \frac{b-1}{b}-\varepsilon\Big);\nonumber \\
    & S_d(t)=d, \qquad \forall d \in \mathcal{D}_\star,\label{appendix_reduced_model}
\end{align}
where $b\geq 6$ is rational and $0<\varepsilon<\frac{b-1}{b}$ is an irrational constant. In addition, we choose $\varepsilon$ small enough so that $\left\lceil \frac{b(b+\varepsilon)}{b-1}\right\rceil=\left\lceil \frac{b^2}{b-1}\right\rceil$. This is possible because $\frac{b^2}{b-1}$ is not an integer for any $b\geq 6$.

  At each time $t$, the MaxWeight policy will choose $(0,1)$ only when $Q_2(t)>bQ_1(t)$ and $(b,0)$ when $bQ_1(t)>Q_2(t)$.
  Let $t_0=0$ and
  \begin{align*}
      t_k=\min\{t: Q_2(t)\geq kb\} \ \text{for} \ 1\leq k\leq \left\lfloor \frac{b}{2}\right\rfloor.
  \end{align*}
We will show, by induction, that 
\begin{enumerate}
    \item[(i)] $t_k$ is well defined;
    \item[(ii)] $Q_1(t_k)=1$;
    \item[(iii)] For $t_{k-1}\leq t< t_k$, $1\leq Q_1(t)\leq k$; $(k-1)b-(k-1)/b\leq Q_2(t)<kb$.
\end{enumerate}
For $k=1$, by the above discussion,  $t_1=\lceil \frac{b(b+\varepsilon)}{b-1}\rceil$ is well defined, $Q_1(t_1)=1$, and (iii) naturally holds. 

Now suppose that (i)-(iii) hold for some $k-1$ with $k\geq 2$. We show (i)-(iii) also hold for $k$. Note the fact that at any time $t$, choosing $(0,1)$ gives
\[
Q_1(t+1) = Q_1(t) + 1, Q_2(t+1) = Q_2(t) - \tfrac{1}{b},
\]
whereas choosing $(b,0)$ yields
\[
Q_1(t+1) = \max\{Q_1(t)-b,0\} + 1 = 1,
Q_2(t+1) = Q_2(t) + 1 - \tfrac{1}{b}.
\]
Thus $Q_1$ increases strictly only when $(0,1)$ is used, and $Q_2$ increases strictly only when $(b,0)$ is used. Starting from $t\geq t_{k-1}$, and noting $Q_1(t_{k-1})=1$,  consider the next $k$ time slots. There must be at least one use of $(b,0)$, because if the first $k-1$ departures were all $(0,1)$, then $Q_1(t_{k-1}+k-1)= k$, while $Q_2(t_{k-1}+k-1)<   (k-1)b+1-(k-1)/b<kb$, forcing the $k$th departure to be $(b,0)$. Moreover, if only consider $k-1$ time slots after $t_{k-1}$, at most one $(b,0)$ can be used, because for $s\leq k$, $Q_2(t_{k-1}+s)\geq (k-1)b-s/b>(k-2)b$. Note that $Q_2(t_{k-1}+k)\geq (k-1)b+1-k/b>(k-1)b$.
By the same reasoning and the fact that $Q_2(t_{k-1}+k+s)\geq (k-1)b+1-(k+s)/b>(k-1)b$ for $s\leq k$, in each subsequent block of $k$ slots, $Q_2$ increases by exactly $1-k/b>0$. Hence $t_k$ is well defined, and since $Q_2$ only increases when $(b,0)$ is applied, we conclude $Q_1(t_k)=1$. Finally, (iii) follows directly from the analysis above. And we complete the induction. 
In addition, we also have the following properties:
\begin{enumerate}
    \item For all $t_{k-1}\leq t\leq t_k$, 
    \begin{align}\label{app_ieq:kb}
     Q_1(t)+Q_2(t)\geq (k-1)b. 
    \end{align}
    \item Each interval length satisfies
    \begin{align*}
   & t_1 -t_0 =\left\lceil \frac{b(b+\varepsilon)}{b-1}\right\rceil,\\
  &\left\lceil\frac{k(b-1)}{1-k/b}\right\rceil-1 \leq t_k-t_{k-1}\leq \left\lceil \frac{kb}{1-k/b}\right\rceil, \ 2 \leq k \leq \left\lfloor \frac{b}{2}\right\rfloor.
\end{align*}
\end{enumerate}
By inequality~\eqref{app_ieq:kb}, for all $t_{k-1}\leq t\leq t_k$,
\begin{align*}
    Q_1(t)+Q_2(t)\geq(k-1)b=\sqrt{t}b\frac{k-1}{\sqrt{t}}\geq \sqrt{t}b\frac{k-1}{\sqrt{t_k}}. 
\end{align*}
Note that
\begin{align*}
    t_k=\sum_{i=1}^{k}(t_i-t_{i-1})\leq \left\lceil \frac{b(b+\varepsilon)}{b-1}\right\rceil+\sum_{i=2}^{k}\left\lceil \frac{ib}{1-i/b}\right\rceil\leq \sum_{i=1}^{k}\left\lceil \frac{ib}{1-i/b}\right\rceil+1.
\end{align*}
The remainder of the proof relies on two technical claims.
\paragraph{Claim 1.} For $b\geq 4$ and $k\geq 2$,
\begin{align*}
    \frac{k-1}{\sqrt{t_k}}\geq \frac{1}{2\sqrt{2b}}.
\end{align*}
\paragraph{Claim 2.} For $b\geq 4$,
\begin{align*}
    t_{\lfloor\frac{b}{2}\rfloor} \geq (\frac{b}{2}-1)^3+1.
\end{align*}
With the help of Claim 1, for $t_1\leq t \leq t_{\lfloor\frac{b}{2}\rfloor} $,
\begin{align*}
    Q_1(t)+Q_2(t)\geq  \frac{\sqrt{bt}}{2\sqrt{2}}.
\end{align*}
Since $t_1= \left\lceil \frac{b(b+\varepsilon)}{b-1}\right\rceil\leq \frac{b^2}{b-1}+1$, and when $b\geq 6$, we have $\frac{b^2}{b-1}+1\leq \left\lceil (\frac{b}{2}-1)^3\right\rceil+1$. Thus, for $b\geq 6$, for all $\left\lceil \frac{b^2}{b-1}\right\rceil \leq t\leq \left\lceil (\frac{b}{2}-1)^3\right\rceil+1$, 
\begin{align*}
    Q_1(t)+Q_2(t)\geq \frac{\sqrt{bt}}{2\sqrt{2}}.
\end{align*}
 Substituting $b=\sqrt{2}B$ completes the proof.

\end{proof}
We now proceed to prove these two claims.

\begin{proof}[Proof of Claim 1.]

For $2\le k\le \lfloor b/2\rfloor$ and each $1\le i\le k$, we have $1-i/b \geq 1/2$. Hence
\[
\left\lceil \frac{ib}{1 - i/b}\right\rceil
\leq \frac{ib}{1 - i/b} + 1 \leq 2ib + 1.
\]
Summing over \(i=1,2,\dots,k\) yields
\begin{align*}
t_k \leq \sum_{i=1}^{k}(2ib + 1)+1=bk(k + 1)+k+1\leq 2bk^2.
\end{align*}
The last inequality holds under the assumptions $b\geq 4$ and $k\geq 2$.
Hence
\[
\frac{k-1}{\sqrt{t_k}}\geq
\frac{k-1}{k\,\sqrt{2b}}=\frac{1-\tfrac1k}{\sqrt{2b}}\geq \frac{1}{2\sqrt{2b}},
\]
where the final inequality uses $k\ge2$, which implies $1-\tfrac1k\ge\tfrac12$.
\end{proof}

\begin{proof}[Proof of Claim 2.]
   Note that
   \begin{align*}
       t_{\lfloor\frac{b}{2}\rfloor}= \sum_{i=1}^{\lfloor\frac{b}{2}\rfloor}(t_i-t_{i-1})\geq \sum_{i=1}^{\lfloor\frac{b}{2}\rfloor}\left(\left\lceil \frac{i(b-1)}{1-i/b}\right\rceil-1\right)+1.
   \end{align*}
And 
\begin{align*}
\sum_{i=1}^{k}\Bigl(\left\lceil \tfrac{i(b-1)}{1 - i/b}\right\rceil - 1\Bigr)+1
&\ge
\sum_{i=1}^k\Bigl(\frac{ib(b-1)}{b-i} - 1\Bigr)+1
\geq\frac{bk(k+1)}{2} - k+1.
\end{align*}
Then 
\[
\sum_{i=1}^{\lfloor b/2\rfloor}\Bigl(\left\lceil \tfrac{i(b-1)}{1 - i/b}\right\rceil - 1\Bigr)+1 \geq \frac{b\lfloor b/2\rfloor(\lfloor b/2\rfloor+1)}{2} - \lfloor b/2\rfloor+1 \geq (\frac{b}{2}-1)^3+1.
\]
\end{proof}

\subsubsection{Formal Proof of Corollary~\ref{coro:gap}}
\label{app:coro_gap}
\begin{proof}[Proof of Corollary~\ref{coro:gap}]
    We continue to use the same construction as in \eqref{exp_qb} with $q=1$ and $B=b/\sqrt{2}$, under which the lower bound for MaxWeight is given by \eqref{lower_maxweight:C=0}.  For $\LyapOpt$, since the arrival rate lies in the scheduling set $\mathcal{D}_\star$ and  $C_{\mathrm{a}}=C_{\mathrm{d}} = 0$, Theorem~\ref{thm:lyapopt} and \eqref{upp_bound:achieve_low_bound} yield
  $$\sum_{i=1}^2 Q_i^{\LyapOpt}(T)=\sum_{i=1}^2 A_i(T)=2-\frac{1}{b}-\varepsilon\leq 2-\frac{1}{\sqrt{2}B},$$
  which completes the proof.
\end{proof}

\subsection{Proofs of Theorems and Supporting Results for Section~\ref{sec:stability}}

\subsubsection{Proof of Theorem~\ref{thm:stability_of_lyapopt}}\label{app:stability}
We invoke the following Foster–Lyapunov lemma (see, e.g., \cite{meyn2012markov}).
\begin{lemma}[Foster--Lyapunov Criterion]
\label{lem:foster-lyapunov}
Let $\{X_t\}_{t\in\mathbb{N}_0}$ be a Markov chain on a countable state space $\mathcal{X}$. 
Suppose there exists a function $V:\mathcal{X}\to [0,\infty)$, a finite subset $W\subset \mathcal{X}$, 
and constants $\epsilon>0$ and $b<\infty$ such that
    \[
       \mathbb{E}[V(X_{t+1})-V(X_t)\mid X_t=x] \le -\epsilon\mathbbm{1}_{\{x\notin W\}}+b\mathbbm{1}_{\{x\in W\}}.
    \]
Then the Markov chain $\{X_t\}_{t\in\mathbb{N}_0}$ is stochastically stable in the sense that it reaches the recurrent states with probability~1, and every recurrent state is positive recurrent.
\end{lemma}
\begin{proof}[Proof of Theorem~\ref{thm:stability_of_lyapopt}]
Under the $\LyapOpt$ policy, the one-step Lyapunov drift
\begin{align*}
    &\mathbb{E}^\LyapOpt_{(A,S)}\left[V(Q(t+1))-V(Q(t))\mid Q(t)=x\right]\\
   & =\mathbb{E}_{(A,S)}\left[\sum_{i=1}^n(\max\{x_i(t)-D_i^\LyapOpt(t),0\}+A_i(t+1))^2\, \bigg| \, Q(t)=x\right]-\sum_{i=1}^nx_i(t)^2\\
    &\leq \sum_{i=1}^n\left(\mathbb{E}\left[(\max\{x_i(t)-D_i^\LyapOpt(t),0\})^2\middle| Q(t)=x\right]+2x_i(t)\mathbb{E}[A_i(t+1)]+\mathbb{E}[A_i(t+1)^2]-x_i(t)^2\right),
\end{align*}
where the inequality uses the independence between $A(t+1)$ and $Q(t)$.
Use the same argument as in~\eqref{ieq:derandom_d} of Appendix \ref{appendix_thm_upper}, we have 
\begin{align*}
    &\sum_{i=1}^n\mathbb{E}\left[(\max\{x_i(t)-D_i^\LyapOpt(t),0\})^2\middle| Q(t)=x\right]\\
    &\leq \sum_{i=1}^n\mathbb{E}\left[(\max\{x_i(t)-d_i^\LyapOpt(t),0\})^2\middle| Q(t)=x\right]+\sum_{i=1}^n \text{Var}(D_i^\LyapOpt(t))\\
    &=\min_{d\in \mathcal{D}_\star} \sum_{i=1}^n(\max\{x_i(t)-d_i,0\})^2 + \sum_{i=1}^n \text{Var}(D_i^\LyapOpt(t))\\
    &\leq \min_{d\in \mathcal{D}_\star} \sum_{i=1}^n(x_i(t)-d_i)^2 + \sum_{i=1}^n \text{Var}(D_i^\LyapOpt(t))\\
    &\leq \min_{d\in \mathcal{D}_\star} \sum_{i=1}^n-2x_i(t)d_i + \max_{d\in \mathcal{D}_\star} \sum_{i=1}^n d_i^2+\sum_{i=1}^n x_i^2+\sum_{i=1}^n \text{Var}(D_i^\LyapOpt(t)),
\end{align*}
where the equality uses the definition of $\LyapOpt$, and the last inequality is by the fact that $\min(f+g)\leq \min f+\max g$.
Therefore,
\begin{align*}
 &\mathbb{E}^\LyapOpt_{(A,S)}\left[V(Q(t+1))-V(Q(t))\mid Q(t)=x\right]\\
    &\leq \min_{d\in \mathcal{D}_\star}\sum_{i=1}^n2x_i(t)(\lambda_i(t+1)-d_i) +2nB^2+nC_{\mathrm{a}}^2+nC_{\mathrm{d}}^2\\
    &\leq -2(1-\rho) \max_{d\in \mathcal{D}_\star}\sum_{i=1}^n x_i(t)d_i +2nB^2+nC_{\mathrm{a}}^2+nC_{\mathrm{d}}^2,
\end{align*}
where the last inequality follows from  $\lambda(t+1) \in \rho \Pi(\mathcal{D}_\star)$, which implies $$\sum_{i=1}^n x_i(t)\lambda_i(t+1)\leq \max_{d\in \mathcal{D}_\star}\rho\sum_{i=1}^n x_i(t)d_i.$$
 Then we choose
\begin{align*}
    W=\left\{x\in \mathbb{Z}_+^n: \max_{d\in \mathcal{D}_t}\sum_{i=1}^n x_id_i\leq \frac{\epsilon+n(2B^2+C_{\mathrm{a}}^2+C_{\mathrm{d}}^2)}{2(1-\rho)}\right\}.
\end{align*}
Since $W$ is finite, it satisfies the condition in Lemma~\ref{lem:foster-lyapunov}, which completes the proof.
\end{proof}

\subsubsection{Proof of Proposition~\ref{prop:stability_of_lyapopt_adv} and Related Lemma}\label{app:proof_thm6}

The next lemma follows directly from Assumption~\ref{assump:ucc}.
\begin{lemma}
\label{lem:ucc_implies_star}
Under Assumption~\ref{assump:ucc}, for all $t\in\mathbb{N}_0$ and all $x\in\mathbb{R}_+^n$,
\begin{equation}
\label{eq:star}
\max_{d\in \mathcal{D}_t} \langle x,d\rangle \geq \frac{\alpha}{n}\sum_{i=1}^n x_i.
\end{equation}
\end{lemma}

\begin{proof}[Proof of Lemma~\ref{lem:ucc_implies_star}]
Fix $t\in\mathbb{N}_0$ and $x\in\mathbb{R}_+^n$. By Assumption~\ref{assump:ucc}, for each $j$ there exists a schedule vector $d^{(j)}(t)\in D_t$ such that its $j$-th component $d^{(j)}_j(t)\geq\alpha$.  Hence
\[
\max_{d\in \mathcal{D}_t} \langle x,d\rangle
\geq
\max_{1\leq j\leq n} \langle x,d^{(j)}(t) \rangle
\geq
\max_{1\leq j\leq n} x_j d^{(j)}_j(t)
\geq
\alpha\,\max_{1\le j\le n} x_j .
\]
Since $\max_i x_i \ge \frac{1}{n}\sum_{i=1}^n x_i$, inequality~\eqref{eq:star} follows.
\end{proof}

\begin{proof}[Proof of Proposition~\ref{prop:stability_of_lyapopt_adv}]
 By inequality~\eqref{eqn:Delta_Vt} and the definition of $\LyapOpt$, the one-step Lyapunov drift under the $\LyapOpt$ policy satisfies
    \begin{align*}
    &\mathbb{E}^\LyapOpt_{(A,S)}\left[V(Q(t+1)-A(t+1))-V(Q(t)-A(t))\mid \mathcal{F}_t\right]\\
    & \leq \min_{d\in \mathcal{D}_{t}} f(Q(t),d) + r(Q(t),A(t))+\sum_{i=1}^n\text{Var}(D_i^\LyapOpt(t))\\
    &= \min_{d\in \mathcal{D}_{t}}\left(2 \sum_{i=1}^n Q_i(t)\big(\lambda_i(t) - d_i\big)+\sum_{i=1}^n\left(d_i^2 - \lambda_i(t)^2\right)\right) + r(Q(t),A(t))+\sum_{i=1}^n\text{Var}(D_i^\LyapOpt(t))\\
    &\leq \min_{d\in \mathcal{D}_{t}}2 \sum_{i=1}^n Q_i(t)\big(\lambda_i(t) - d_i\big)+nB^2 + r(Q(t),A(t))+\sum_{i=1}^n\text{Var}(D_i^\LyapOpt(t))\\
    &\leq -2(1-\rho) \max_{d\in \mathcal{D}_t}\sum_{i=1}^n Q_i(t)d_i+nB^2 + r(Q(t),A(t))+\sum_{i=1}^n\text{Var}(D_i^\LyapOpt(t)),
\end{align*}
where the last inequality follows from  $\lambda(t) \in \rho \Pi(\mathcal{D}_t)$, which implies $$\sum_{i=1}^n Q_i(t)\lambda_i(t)\leq \max_{d\in \mathcal{D}_t}\rho\sum_{i=1}^n Q_i(t)d_i.$$
Taking expectations and summing over $t$ yields
\begin{align*}
    \mathbb{E}^\LyapOpt_{(A,S)}[V(Q(T)-A(T))]\leq -2(1-\rho) \sum_{t=1}^{T-1}\max_{d\in \mathcal{D}_t}\sum_{i=1}^n \mathbb{E}^\LyapOpt_{(A,S)}[Q_i(t)]d_i+n(T-1)(B^2 + C_{\mathrm{a}}^2+C_{\mathrm{d}}^2).
\end{align*}
By Lemma~\ref{lem:ucc_implies_star}, 
\begin{align*}
    \max_{d\in \mathcal{D}_t}\sum_{i=1}^n \mathbb{E}^\LyapOpt_{(A,S)}[Q_i(t)]d_i\geq \frac{\alpha}{n}\sum_{i=1}^n \mathbb{E}^\LyapOpt_{(A,S)}[Q_i(t)].
\end{align*}
Then we have the uniform stability bound \eqref{uni_upper_bound}.
\end{proof}

\subsection{Supplementary Experiments in Section~\ref{subsec:experiments_gap}}
\label{sec_appendix: supple_exp}
The following two figures confirm that, in the setting of the Corollary \ref{coro:gap}, the total queue length under MaxWeight eventually becomes bounded (i.e., $O(1)$ in $T$) as $T$ increases.
\begin{figure}[ht]
    \centering
    \begin{subfigure}[t]{0.48\textwidth}
        \includegraphics[width=\linewidth]{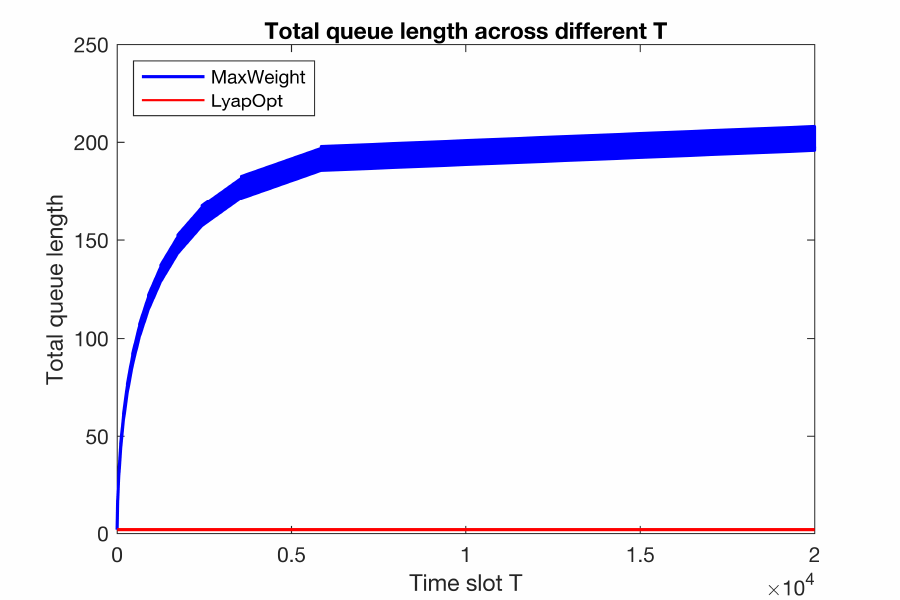}
        \caption{Total queue length when $B=10$, $C_{\mathrm{a}}=C_{\mathrm{d}}=0$}
        \label{fig_app:performance_b10}
    \end{subfigure}%
    \hfill
    \begin{subfigure}[t]{0.48\textwidth}
        \includegraphics[width=\linewidth]{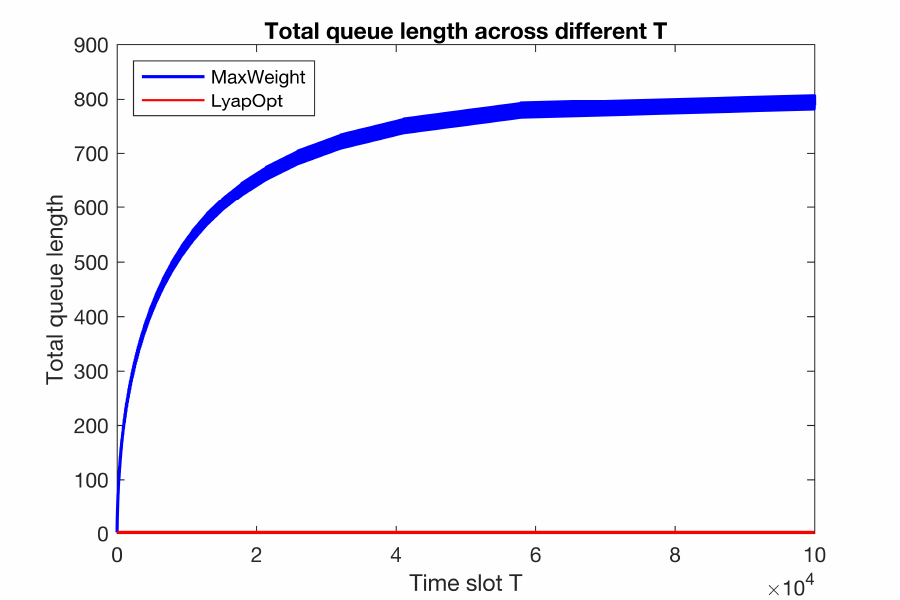}
        \caption{Total queue length when $B=20$, $C_{\mathrm{a}}=C_{\mathrm{d}}=0$}
        \label{fig:performance_b20}
    \end{subfigure}
    \caption{Performance comparison of MaxWeight and LyapOpt policies versus $B$} 
    \label{fig_appendix:all_results}
\end{figure}

\subsection{Formal Proofs of Results  in Section~\ref{sec:extension}}
\subsubsection{Proof of Proposition \ref{thm:lower_bound_indep}}\label{app: thm_lower_bound_indep}
\begin{proof}[Proof of Proposition \ref{thm:lower_bound_indep}.]
We focus on the case $C_{\mathrm{a}}^2 + C_{\mathrm{d}}^2 > 0$; as the case $C_{\mathrm{a}}^2 + C_{\mathrm{d}}^2 = 0$ is straightforward.
 Fix a time-invariant scheduling sequence $\mathcal{D}$ with $\mathcal{D}_t=\mathcal{D}_\star$ for all $t\in \mathbb{N}$, where $\mathcal{D}_\star$ is defined in~\eqref{construction_D}. Let 
$$M=\max\left\{\sum_{i=1}^n d_i : d\in \mathcal{D}_\star\right\}=nB$$
denote the maximum total service capacity in a single time slot.
  We construct arrival process $A$ and random departure field $S$ are mutually independent. For each $t$, the components $A_i(t)$ are independent across $i$ and distributed as
\begin{align*}
    &\mathbb{P}\Big(A_i(t)=K_{\mathrm{a}}\lambda_i\Big)=\frac{1}{K_{\mathrm{a}}},\quad \mathbb{P}\Big(A_i(t)=0\Big)=\frac{K_{\mathrm{a}}-1}{K_{\mathrm{a}}},
\end{align*}
where the choices of $\lambda$ and $K_{\mathrm{a}}$ follow those specified in \eqref{choice_lambda_Ka}. Thus, the mean and variance conditions for $A$ in \eqref{eq:mean_variance_a} still holds.
In particular, the total variance across all queues is given by
\begin{align*}
    \sum_{i=1}^n\text{Var}(A_i(t))=\sum_{i=1}^n(K_{\mathrm{a}}-1)\lambda_i^2=nC_{\mathrm{a}}^2.
\end{align*}
For each $t$ and $d\in \mathcal{D}_t$, the components of $(S_d(t))_i$ are independent across $i$ and distributed as
\begin{align*}
    &\mathbb{P}\Big((S_d(t))_i=K_{\mathrm{d}} d_i\Big)=\frac{1}{K_{\mathrm{d}}},\quad \mathbb{P}\Big((S_d(t))_i=0\Big)=\frac{K_{\mathrm{d}}-1}{K_{\mathrm{d}}},
\end{align*}
where the choice of $K_{\mathrm{d}}$ follows those specified in \eqref{choice_Kd}. This confirms that the constructed pair $(A, S)$ belongs to the model class $\mathcal{M}_{\mathcal{I}}^ {\rho}(B,C_{\mathrm{a}}, C_{\mathrm{d}})$. Recall that $D(t)=S_{d(t)}(t)$. So the components $D_i(t)$ are independent across $i$ and distributed as
\begin{align*}
    &\mathbb{P}\Big(D_i(t)=K_{\mathrm{d}} d_i(t)\Big)=\frac{1}{K_{\mathrm{d}}},\quad \mathbb{P}\Big(D_i(t)=0\Big)=\frac{K_{\mathrm{d}}-1}{K_{\mathrm{d}}}.
\end{align*}
Let $\{G(t)\}_{t\in \mathbb{N}}$ be an i.i.d. sequence of random variables defined by $G(t)=\sum_{i=1}^n Y_i(t)$, where $Y_i(t)$ are independent across $i$ and distributed as
\begin{align*}
    &\mathbb{P}\Big(Y_i(t)=K_{\mathrm{d}}B\Big)=\frac{1}{K_{\mathrm{d}}},\quad \mathbb{P}\Big(Y_i(t)=0\Big)=\frac{K_{\mathrm{d}}-1}{K_{\mathrm{d}}},
\end{align*}
so that
\begin{align*}
 \mathbb{P}(G(t)\geq y)\geq \mathbb{P}(\sum_{i=1}^n D_i(t)\geq y) \ \text{ for each $y$}, \ \mathbb{E}[G(t)]=M, \ \text{and} \ \text{Var}(G(t))=nC_{\mathrm{d}}^2 .
\end{align*} 
The following argument follows the same idea as in the proof of Theorem~\ref{thm_lower_bound}.
Let $X(t)=\sum_{i=1}^n A_i(t)$. By \eqref{appen:lower_queue1}, the queue length satisfies
\begin{align*}
    \sum_{i=1}^n Q_i(T) & \geq\max\left\{\sum_{t=1}^{T-1}\Big(X(t)-G(t)\Big),\sum_{t=2}^{T-1}\Big(X(t)-G(t)\Big),\dots, X(T-1)-G(T-1),0\right\}+X(T).
    \end{align*}
 Observe that
 \[
 X(t)-G(t)=(X(t)-\rho M)+(M-G(t))-(1-\rho)M.
 \]
Note that $\{(X(t)-\rho M)-(M-G(t))\}_{t \in \mathbb{N}}$ forms an i.i.d. sequence of random variables, satisfying
\begin{align*}
\mathbb{E}[(X(t)-\rho M)-(M-G(t))]=0, \text{Var}((X(t)-\rho M)+(M-G(t)))=nC_{\mathrm{a}}^2+nC_{\mathrm{d}}^2,
\end{align*}
where the variance follows from the independence of all components. Thus, by Lemma \ref{lemma:bound_negative_drift}, we obtain the following lower bound: for all $T \geq \psi_2^2+1$ and $(1-\rho)M \leq \sigma/\psi_2$, 
\begin{align*}
    &\mathbb{E}\left[\max\left\{\sum_{t=1}^{T-1}\Big(X(t)-G(t)\Big),\sum_{t=2}^{T-1}\Big(X(t)-G(t)\Big),\dots, X(T-1)-G(T-1),0\right\}\right]\nonumber\\
    &=\mathbb{E}\left[\max\left\{0,X(1)-G(1),\sum_{t=1}^{2}\Big(X(t)-G(t)\Big),\dots,\sum_{t=1}^{T-1}\Big(X(t)-G(t)\Big) \right\}\right]\nonumber\\
    &\geq \frac{\underline{c}}{2}\min\left\{\sqrt{n(C_{\mathrm{a}}^2+C_{\mathrm{d}}^2)(T-1)}, \frac{C_{\mathrm{a}}^2+C_{\mathrm{d}}^2}{B(1-\rho)}\right\},
\end{align*}
where $\psi_2$ is the unique positive solution to $\frac{4(\bar{c}/\sigma+1)}{\underline c}\log(1+x)+\frac{8}{\underline c}= x$; $\sigma=\sqrt{nC_{\mathrm{a}}^2+nC_{\mathrm{d}}^2}$; $\underline{c}$ and $\bar{c}$ are the constants defined in Lemmas~\ref{lem:sharp-lb} and~\ref{lem:KMT} respectively. 

 Observe that for $\rho \in [1/2,1]$, the random variables $(X(t)-\rho M)+(M-G(t))$ have uniformly bounded support across $\rho$,
\[
|(X(t)-\rho M)+(M-G(t))|\leq n\left(\frac{2C_{\mathrm{a}}^2}{B} \vee 1+\frac{C_{\mathrm{d}}^2}{B}\vee 1\right)
\]
and variance $\sigma^2=n(C_{\mathrm{a}}^2+C_{\mathrm{d}}^2)$. Consequently, their Sakhanenko parameters are uniformly bounded away from zero by a positive constant depending only on $n$, $C_{\mathrm{a}}$, $C_{\mathrm{d}}$ and $B$. By Lemma~\ref{lem:KMT}, the constant $\bar{c}$ can therefore be chosen to depend only on $n$, $C_{\mathrm{a}}$, $C_{\mathrm{d}}$ and $B$.

Let $c_1=\underline{c}/2$ and define $\psi_2(n,B,C_{\mathrm{a}},C_{\mathrm{d}})$ as the unique positive solution to $\frac{4(\bar{c}/\sigma+1)}{\underline c}\log(1+x)+\frac{8}{\underline c}= x$, we recover the result in \eqref{ieq:lower_bound_independent_rho}.
 
\end{proof}
\subsubsection{Proof of Proposition~\ref{prop:new_policy}}
\label{app: other model classes}
\begin{proof}[Proof of Proposition~\ref{prop:new_policy}]
Under the assumption that $\mathbb{E}[A(t)] = \lambda \in \mathcal{D}_\star$, we have 
\begin{align}\label{assum_lambda_in_d}
\sum_{i=1}^n \max \{ \widehat{A}_i(t)- d^{\mathrm{EP}}_i(t),0\} \leq \sum_{i=1}^n \max \{ \widehat{A}_i(t)- \lambda_i,0\}.
\end{align}
 Let $\Delta(t)=A(t)-D^{\mathrm{EP}}(t)$. Then
\begin{align*}
    \Delta(t) &= (A(t) - \lambda) + (\lambda - \widehat{A}(t)) + (\widehat{A}(t) - d^{\mathrm{EP}}(t))+(d^{\mathrm{EP}}(t)-D^{\mathrm{EP}}(t)) \\
    &\overset{\triangle}{=} \Delta^{(1)}(t) + \Delta^{(2)}(t)+\Delta^{(3)}(t)+\Delta^{(4)}(t).
\end{align*}
Under this policy, for each $i$,
\begin{align*}
    Q_i(T)-A_i(T)
          &=\max\bigg\{\sum_{t=1}^{T-1}\Delta_i(t),\sum_{t=2}^{T-1}\Delta_i(t),\dots,\Delta_i(T-1),0\bigg\}\\
          &\leq \sum_{j=1}^4\max\bigg\{\sum_{t=1}^{T-1}\Delta_i^{(j)}(t),\sum_{t=2}^{T-1}\Delta_i^{(j)}(t),\dots,\Delta_i^{(j)}(T-1),0\bigg\}.
\end{align*}
For the first term, let $S^{(1)}_i(t) = \sum_{s=T-t}^{T-1}\Delta^{(1)}_i(s)$ and $M^{(1)}_i(t)= \max_{1\leq s\leq  t} S^{(1)}_i(s)$ for $t\in \mathbb{N}$. Specifically, let
$S^{(1)}_i(0)=M^{(1)}_i(0)=0$. since the variables $\{\Delta^{(1)}_i(s)\}_{s\geq 1}$ are independent across $s$ and satisfy $\mathbb{E}[\Delta^{(1)}_i(s)]=0$, $\{S^{(1)}_i(t)\}_{t=0}^{T-1}$ is a martingale. Thus, we have
\begin{align*}
    &\mathbb{E}\left[\max\bigg\{\sum_{t=1}^{T-1}\Delta_i^{(1)}(t),\sum_{t=2}^{T-1}\Delta_i^{(1)}(t),\dots,\Delta_i^{(1)}(T-1),0\bigg\}\right]\\
    &\ = \mathbb{E}\Big[M^{(1)}_i(T-1)\Big]   \leq \sqrt{\mathbb{E}\Big[\big(M^{(1)}_i(T-1)\big)^2\Big]}\\
    &\ \leq \sqrt{4 \mathbb{E}\Big[\big(S^{(1)}_i(T-1)\big)^2\Big]}=2\sqrt{\sum_{t=1}^{T-1}\operatorname{Var}(A_i(1))},
\end{align*}
where the first inequality follows from the Jensen's inequality and the second inequality follows from   Doob's maximal inequality. Using the same argument, for the forth term,
we also have
\begin{align*}
   \mathbb{E}\left[\max\bigg\{\sum_{t=1}^{T-1}\Delta_i^{(4)}(t),\sum_{t=2}^{T-1}\Delta_i^{(4)}(t),\dots,\Delta_i^{(4)}(T-1),0\bigg\}\right] \leq 2\sqrt{\sum_{t=1}^{T-1}\operatorname{Var}(D_i(t))}.
\end{align*}
For the second term,
\begin{align*}
    &\max\bigg\{\sum_{t=1}^{T-1}\Delta^{(2)}_i(t),\sum_{t=2}^{T-1}\Delta^{(2)}_i(t),\dots,\Delta^{(2)}_i(T-1),0\bigg\}\leq \sum_{t=1}^{T-1}\max\{\Delta_i^{(2)}(t),0\}\leq \sum_{t=1}^{T-1}\big|\Delta_i^{(2)}(t)\big|.
\end{align*}
Note that
\begin{align*}
 \mathbb{E}\Big[\big|\Delta_i^{(2)}(t)\big|\Big] \leq \sqrt{\mathbb{E}\Big[\big(\Delta_i^{(2)}(t)\big)^2\Big]}= \sqrt{\frac{\operatorname{Var}(A_i(1))}{t}}.
\end{align*}
Then
\begin{align*}
  \sum_{t=1}^{T-1}\big|\Delta_i^{(2)}(t)\big|\leq  \sqrt{\text{Var}(A_i(1))} \sum_{s=1}^{T-1}\frac{1}{\sqrt{t}}&\leq \sqrt{\text{Var}(A_i(1))}\int_{s=1}^{T-1} \frac{1}{\sqrt{t}} \mathrm{d}t \leq 2\sqrt{(T-1)\operatorname{Var}(A_i(1))}.
\end{align*}
For the third term,
\begin{align*}
    &\sum_{i=1}^n\max\bigg\{\sum_{t=1}^{T-1}\Delta^{(3)}_i(t),\sum_{t=2}^{T-1}\Delta^{(3)}_i(t),\dots,\Delta^{(3)}_i(T-1),0\bigg\}\\
     & \ \leq \sum_{i=1}^n\sum_{t=1}^{T-1}\max\{\Delta_i^{(3)}(t),0\}= \sum_{t=1}^{T-1}\sum_{i=1}^n\max\{\widehat{A}_i(s)-d^{\mathrm{EP}}_i(t), 0\}\\
    & \ \leq  \sum_{t=1}^{T-1}\sum_{i=1}^n\max\{\widehat{A}_i(s)-\lambda_i, 0\}\leq  \sum_{t=1}^{T-1}\sum_{i=1}^n|\widehat{A}_i(s)-\lambda_i| \\
    & \ = \sum_{t=1}^{T-1}\sum_{i=1}^n\big|\Delta_i^{(2)}(t)\big|\leq \sum_{i=1}^n2\sqrt{(T-1)\operatorname{Var}(A_i(t))}.
\end{align*}
In sum, we have
\begin{align*}
    \mathbb{E}\bigg[\sum_{i=1}^n Q_i(T)\bigg]-\mathbb{E}\bigg[\sum_{i=1}^n A_i(T)\bigg] &\leq \sum_{i=1}^n 6\sqrt{(T-1)\operatorname{Var}(A_i(1))}+2\sum_{i=1}^n\sqrt{\sum_{t=1}^{T-1}\operatorname{Var}(D_i(t))}\\
    &\leq n\sqrt{T-1}(6C_{\mathrm{a}}+2C_{\mathrm{d}}).
\end{align*}
\end{proof}

% \section{Electronic Companion}

%%%%%%%%%%%%%%%%%%%%%%%%%%%%%%%%%%%%%%%%%%%%%%%%%%%%%%%%%%%%

\end{document}